\numberwithin{equation}{section}
\newcommand{\e}{\epsilon}
\newcommand{\ga}{\gamma}
\newcommand{\de}{\delta}
\newcommand{\br}{\mathbb{R}}
\newcommand{\N}{\mathbb{N}}
\newcommand{\ik}{\varphi}
\newcommand{\pa}{\partial}
\newcommand{\bt}{\beta}
\newcommand{\al}{\alpha}
\newcommand{\la}{\lambda}
\newcommand{\coi}{C_0^{\infty}}
\newcommand{\ioi}{\int_0^{\infty}}
\newcommand{\be}{\begin{equation}}
\newcommand{\ee}{\end{equation}}
\newcommand{\bs}{\begin{split}}
\newcommand{\fs}{\tilde\upsilon}
\newcommand{\dd}{\text{d}}
\newcommand{\op}{\text{Op}}
\newcommand{\chx}{\check x}
\newcommand{\chw}{\check w}
\newcommand{\chz}{\check z}
\newcommand{\cht}{\check t}
\newcommand{\chq}{\check q}
\newcommand{\hxi}{\hat  \xi}
\newcommand{\WF}{\text{WF}}
\newcommand{\os}{{(1)}}
\newcommand{\tp}{{(2)}}
\newcommand{\bma}{\begin{pmatrix}}
\newcommand{\ema}{\end{pmatrix}}
\newcommand{\us}{\mathcal U}
\newcommand{\vs}{\mathcal V}
\newcommand{\s}{\mathcal S}
\newcommand{\T}{\mathcal T}
\newcommand{\R}{\mathcal R}
\newcommand{\CF}{\mathcal F}
\newcommand{\G}{\mathcal G}
\newcommand{\I}{\mathcal I}
\newcommand{\chii}{\chi_{1\text{D}}}
\newcommand{\CB}{\mathcal{B}}
\newcommand{\CH}{\mathcal{H}}
\newcommand{\CE}{\mathcal E}
\newcommand{\CD}{\mathcal D}
\newcommand{\Q}{\mathcal{Q}}
\newcommand{\DTB}{\text{DTB}}
\newcommand{\tsp}{\text{supp}\,}
\newcommand{\ssp}{\text{sing\hspace{1.0pt}supp}}
\newcommand{\tsum}{\textstyle\sum}
\newtheorem{theorem}{Theorem}[section]
\newtheorem{lemma}[theorem]{Lemma}
\theoremstyle{definition}
\newtheorem{assumptions}[theorem]{Assumption}
\newtheorem{definition}[theorem]{Definition}
\begin{document}

\title[LRA of iterative reconstruction]{Local analysis of iterative reconstruction from discrete generalized Radon transform data in the plane}
\author[A Katsevich]{Alexander Katsevich$^1$}
\thanks{$^1$Department of Mathematics, University of Central Florida, Orlando, FL 32816 (Alexander.Katsevich@ucf.edu).}

\begin{abstract} 
Local reconstruction analysis (LRA) is a powerful and flexible technique to study images reconstructed from discrete generalized Radon transform (GRT) data, $g=\mathcal R f$. The main idea of LRA is to obtain a simple formula to accurately approximate an image, $f_\epsilon(x)$, reconstructed from discrete data $g(y_j)$ in an $\epsilon$-neighborhood of a point, $x_0$. The points $y_j$ lie on a grid with step size of order $\epsilon$ in each direction. In this paper we study an iterative reconstruction algorithm, which consists of minimizing a quadratic cost functional. The cost functional is the sum of a data fidelity term and a Tikhonov regularization term. The function $f$ to be reconstructed has a jump discontinuity across a smooth surface $\mathcal S$. Fix a point $x_0\in\mathcal S$ and any $A>0$. The main result of the paper is the computation of the limit $\Delta F_0(\check x;x_0):=\lim_{\epsilon\to0}(f_\epsilon(x_0+\epsilon\check x)-f_\epsilon(x_0))$, where $f_\epsilon$ is the solution to the minimization problem and $|\check x|\le A$. A numerical experiment with a circular GRT demonstrates that $\Delta F_0(\check x;x_0)$ accurately approximates the actual reconstruction obtained by the cost functional minimization.
\end{abstract}

\keywords{Generalized Radon transform, discrete data, resolution analysis, Fourier integral operators, singularities}
\subjclass[2020]{44A12, 65R10, 92C55}

\maketitle

\section{Introduction}\label{intro}

\subsection{Local reconstruction analysis (LRA)}
Let $\R$ be the generalized Radon transform (GRT), \textcolor{black}{which integrates a function $f$ on $\us\subset\br^n$ over a family of surfaces parameterized by a set} $\vs\subset\br^n$. 
Thus, $f$ is the function to be reconstructed. Let $\hat f(y)=(\R f)(y)$, $y\in\vs$, denote the GRT of $f$. The open sets $\us$ and $\vs$ represent the image and data domains, respectively. The values of $\hat f(y)$ are given on a rectangular grid $y_j$, $j\in\mathbb Z^n$. The step size of the grid along each coordinate is proportional to a small parameter $\e>0$. 

Consider a reconstruction formula of the form $f_0=\R^*\CB \hat f$, where $\R$ is the adjoint transform and $\CB$ is a Pseudo-Differential Operator ($\Psi$DO). Depending on the choice of $\CB$, the reconstruction can be theoretically exact ($f_0=f$), quasi-exact ($f_0-f$ is less singular than $f$), edge-enhancing ($f_0$ is more singular than $f$), or smoothing of a finite degree ($f_0$ is less singular than $f$). \textcolor{black}{The common property of each of these kinds of reconstructions is that, microlocally, $\WF(f)=\WF(f_0)$.}

Next, let $\ik$ be a compactly supported, sufficiently smooth interpolation kernel, and let $\hat f_\e(y)$, $y\in\vs$, denote the function obtained by interpolating the values $\hat f(y_j)$, see eq. \eqref{interp-data}. Denote $f_\e=\R^*\CB \hat f_\e$. Thus, $f_\e$ is reconstructed from discrete data. 

LRA is a powerful and flexible technique proposed by the author to study images $f_\e$ reconstructed from discrete GRT data. \textcolor{black}{The reconstruction can be of any kind, e.g., exact, quasi-exact, edge-enhancing, or smoothing as described above.}
The main idea of LRA is to obtain a simple formula to accurately approximate $f_\e$ in an $\e$-neighbor\-hood of a point, $x_0\in\us$. 
\textcolor{black}{We assume that} $f$ is conormal with respect to some hypersurface $\s$ \cite[Definition 18.2.6]{hor3}. Fix any $x_0\in\s$ and $A>0$. We show that, in a variety of settings, the following limit exists
\be\label{generic LRA}
\lim_{\e\to0}\e^\nu f_\e(x_0+\e\check x),\ |\chx|<A,
\ee
the limit is uniform in $\chx$, and compute the limit explicitly. Here $\nu\ge0$ is some constant, which depends on the strength of the singularity of $f_0$ at $x_0$.

\textcolor{black}{To help understand \eqref{generic LRA}, we consider an example of a GRT in the plane.} Suppose $f$ has a jump across $\s$, where $\s\subset\br^2$ is a smooth curve. Suppose also that the reconstruction is either exact or quasi-exact. In this case $\R^*\CB\R$ is a $\Psi$DO with the principal symbol 1 and $\nu=0$. \textcolor{black}{Let $\vec\Theta_0$ be a unit vector perpendicular to $\s$ at $x_0$.} 
Under some mild conditions on $\s$ and $x_0$, we show
\be\label{DTB new use}
\lim_{\e\to0}f_\e(x_0+\e\check x)=f_0(x_0^-)+\Delta f(x_0)\DTB(\vec\Theta_0\cdot\check x;x_0),\ |\chx|<A,
\ee
uniformly in $\chx$. Here 
\begin{enumerate}
\item \textcolor{black}{$f_0(x_0^\pm):=\lim_{t\to0^\pm}f_0(x_0+t\vec\Theta_0)$;}  
\item $\Delta f(x_0):=f_0(x_0^+)-f_0(x_0^-)$ is the value of the jump of $f$ across $\s$ at $x_0$; and 
\item DTB, which stands for the \textit{Discrete Transition Behavior}, is an easily computable function independent of $f$. 
\end{enumerate}
The DTB function depends only on the interpolation kernel, $\ik$.
When $\e$ is sufficiently small, the right-hand side of \eqref{DTB new use} is an accurate approximation of $f_\e$, and the DTB function accurately describes the smoothing of the jump of $f$ in $f_\e$ due to the discrete nature of the tomographic data. 

As is seen from \eqref{DTB new use}, {\it LRA provides a uniform approximation to $f_\e(x)$ in domains of size $\sim\e$, which is comparable to the data step size, i.e. at native resolution.} Given the DTB function, one can study local properties of reconstruction from discrete data, such as spatial resolution. 

In \cite{Katsevich2017a, kat19a, Katsevich2020a, Katsevich2020b, Katsevich2021a} we compute limits of the kind \eqref{generic LRA} when $f$ has jumps across {\it smooth} curves (and surfaces in higher dimensions). We consider GRT in any $\br^n$, $n\ge 2$, which integrates over submanifolds of any dimension $N$, $1\le N\le n-1$, $f$ may have a fairly general (conormal) singularity at $\s$, and the reconstruction operator is a fairly general Fourier Integral Operator (FIO). 

In many applications, discontinuities of $f$ occur across {\it rough} surfaces. Examples include soil and rock imaging, where the surfaces of cracks and pores, and the boundaries between different regions are highly irregular and frequently simulated by fractals \cite{GouyetRosso1996, Li2019, soilfractals2000, PowerTullis1991}. Another example is cancer detection in CT. Cancerous lesions have rougher boundaries than benign ones \cite{dhara2016differential,dhara2016combination}. In \cite{Katsevich2023a, Katsevich2022a, Katsevich_2025_BV}, we extend LRA to functions on the plane with jumps across {\it rough} boundaries (i.e., $\ssp f$ is no longer a smooth curve).

In \cite{Katsevich_aliasing_2023}, LRA is extended to view aliasing by obtaining a formula for a limit similar to \eqref{generic LRA} that accurately describes {\it aliasing artifacts away from $\s$}, i.e. $x_0\not\in\s$. These are rapidly oscillating artifacts (ripples) that are caused by aliasing from $\s$.

Besides the discrete nature of observed data, the second major factor affecting image quality is noise in the data. Very recently, Anuj Abhishek, James Webber and the author showed that the LRA approach allows one to obtain an accurate, complete, and simple description of the {\it reconstructed noise} in an $O(\e)$-size domain \cite{AKW2024_1, Kats_noise_2024} if $g(y_j)$ are known with random errors. 

Some of the above questions have been investigated using other techniques, most notably within the framework of sampling theory. See, for example, \cite{nat93, pal95, far04} just to name a few papers, and more recently, \cite{stef20, Monard2021}. However, these approaches rely on certain assumptions, such as $f$ being bandlimited or essentially bandlimited. Assumptions of this sort usually hold only approximately and do not apply to functions with jumps. In contrast, LRA does not require $f$ to be bandlimited in any way, either exactly or approximately.

Even though LRA applies in a wide variety of settings, one of its major limitations is that so far it has only been applied when $f_\e$ is computed by a linear reconstruction algorithm, i.e., by an application of a reconstruction formula to data: $f_\e=\R^* \CB \hat f_\e$. Very few GRTs admit an exact inversion formula. In such cases, at best, formula-based reconstruction can only guarantee an accurate recovery of the singularities of $f$ (which we called quasi-exact reconstruction above). 

\subsection{New result - LRA for iterative reconstruction}
A popular flexible approach, which can numerically exactly invert a wide range of GRTs without relying on an inversion formula, consists of iterative minimization of a cost functional, see \cite{HJL21} and references therein. This approach is called iterative reconstruction (IR). A typical cost functional is the sum of a data fidelity and  regularization terms. 

In this paper we extend LRA to the study of 2D IR when $f$ has jumps across smooth boundaries, $\s$. We consider a GRT, $\R$, in the plane. The transform $\R$ maps a function $f(x)$, $x\in\us\subset \br^2$, into its weighted integrals, $g(y)=(\R f)(y)$, along a family of curves $\s_y$, $y\in\vs\subset\br^2$. A more precise description of $\vs$ is at the beginning of Section~\ref{ssec:grt}. The discrete data, $g(y_j)$, are given at the points $y_j$, $j\in\mathbb Z^2$. The reconstructed function, still denoted $f_\e$, is computed as the solution to the following quadratic minimization problem:
\be\label{min pb 0}
f_\e=\text{argmin}_{f\in H_0^1(\us_b)}\Psi(f),\ 
\Psi(f):=\Vert \R f-\hat f_\e \Vert_{L^2(\vs)}^2+\kappa \e^{3}\Vert \pa_x f\Vert_{L^2(\us)}^2,
\ee
where $\kappa>0$ is the regularization parameter (which is predetermined and independent of $\e$), \textcolor{black}{$U_b\Subset U$ is a domain, and $H_0^1(\us_b)$ is the closure of $\coi(U_b)$ in the $H^1$ norm.} In the spirit of LRA, the factor $\e^3$ in front of the Tikhonov regularization term ensures that the resolution of the reconstruction is at the native scale $\e$.

To solve \eqref{min pb 0} numerically, one usually assumes that $f_\e(x)=\sum_l c_l\psi_l(x)$, where $\psi_l$ are some basis functions, and their number is of the same order of magnitude as the number of data points. The reconstruction consists of computing the coefficients $c_l$. For conventional analysis of reconstruction at a scale $\gg\e$, the effects of this finite-dimensional approximation of $f_\e$ are largely invisible. However, LRA is designed to study reconstruction at the native scale, $\e$. This is the scale of greatest practical importance. At this scale, the two types of effects, (a) due to the discrete nature of data and (b) due to the representation of $f_\e$ as a linear combination of basis functions, become comparable and relevant. 

It is clear that the two effects are fundamentally different. The first effect cannot be overcome, because the data are always discrete. The second effect can be largely mitigated by selecting a sufficiently fine reconstruction grid, making it negligible compared to the first one. 

An important feature of the minimization problem \eqref{min pb 0} is that it looks for a solution, $f_\e$, in an (infinite dimensional) Hilbert space. Of course, this is numerically impossible. In the end, we still have to find a solution in a finite-dimensional vector space. However, numerically, the effects of this replacement of the solution space can be made as small as one likes relative to the effects of discrete data. We consider \eqref{min pb 0} because our goal is to study the most fundamental limitation on image quality, namely the limitation due to discrete data. While the effects of finite-dimensional approximation of a solution are less fundamental (because they can be effectively mitigated), they are important from the point of view of computational complexity. Nevertheless, this line of research is beyond the scope of the paper.

The main result of the paper, stated in Theorems~\ref{thm:main res} and \ref{thm:main res mult}, consists of formulas of the type \eqref{DTB new use} for the solution $f_\e$ to the minimization problem \eqref{min pb 0}.

To the best of the author’s knowledge, this is the first paper that investigates IR at native resolution in a mathematically rigorous fashion. Usually, analysis of IR consists of showing that the algorithm converges in some global norm \cite{BKN15}. Due to its practical significance, the local analysis of resolution of various tomographic modalities has been extensively studied in the applied literature \cite{QiLeahy00, Fess03, StayFess04a, StayFess04b, ShiFess06, ShiFess09, AhnLeahy08, ChunFess12}. However, rigorous theoretical analysis has been absent.

\subsection{Organization of the paper}
The paper is organized as follows. In Section~\ref{ssec:spaces}, we introduce various function spaces used in the paper as well as remind the reader the definitions of a $\Psi$DO and FIO. These are not the most general definitions, but they will suffice for our purposes. In Section~\ref{sec:setting} we describe the setting of the problem, including the GRT $\R$, the class of functions $f$ to be reconstructed, and the minimization problem, whose solution, $f_\e$, approximates $f$. We also state our main results, Theorems~\ref{thm:main res} and \ref{thm:main res mult}. 

The rest of the paper contains the proofs of the two theorems. In Section~\ref{GRT-behavior} we obtain a formula for  $\R f$ in a neighborhood of its singular support, \eqref{g-lead-sing}. In Section~\ref{sec:reg_sol} we obtain an alternative formula for computing $f_\e$, equation \eqref{eqn checks oper 2}. The equation is of the form $\mathcal Tf_\e=F_\e$, where $\mathcal T$ is some $\Psi$DO, and $F_\e$ is computed from the data $\hat f(y_j)$. Also, we find another function, $\G_\e$, such that (a) $\G_\e$ is easier to analyze than $f_\e$ and (b) the local behavior of $f_\e$ and $\G_\e$ near $x_0$ are the same from the perspective of LRA, see Lemma~\ref{lem:femge}. In Section~\ref{sec:RHS bdd} we prove that $\Vert F_\e\Vert_{L^\infty(\us)}$ is uniformly bounded when $0<\e\ll1$. In Section~\ref{sec:fplt} we compute the limit
\be\label{del F0 lim}
\Delta F_0(\chx):=\lim_{\e\to0}\big(F_\e^{(l)}(x_0+\e\chx)-F_\e^{(l)}(x_0)\big),
\ee
where $F_\e^{(l)}(x)$ is the leading singular term of $F_\e(x)$ near $x_0$, see \eqref{last sum lim3}. 
In Section~\ref{sec:DTB fn} we compute the DTB function, $\Delta \G_0$, as the limit
\be\label{del G0 lim}
\Delta \G_0(\chx):=\lim_{\e\to0}\big(\G_\e(x_0+\e\chx)-\G_\e(x_0)\big),
\ee
see equation \eqref{rhs lim 5}. The limits in \eqref{del F0 lim} and \eqref{del G0 lim} are uniform in $\chx$ confined to any bounded set. The calculation assumes that the singularity of $f$ at $x_0$ is visible from the data only once. \textcolor{black}{This means that there is only one $y\in\vs$ such that $\s_y$ is tangent to $\s$ at $x_0$.} This completes the proof of Theorem~\ref{thm:main res}.

A more general formula for the DTB function when the data are redundant (i.e., the singularity is visible several times) is obtained in Section~\ref{sec:red dat}, see equation \eqref{upsilon simple mult}. This completes the proof of Theorem~\ref{thm:main res mult}. Results of a numerical experiment, which validate the developed theory, are presented in Section~\ref{sec:numerics}. Finally, the proofs of all the technical lemmas are included in the appendices at the end of the paper.


\section{Basic notation, function spaces, symbols, operators}\label{ssec:spaces}

Denote $\N=\{1,2,\dots\}$ and $\N_0=\{0\}\cup\N$. Let $U\subset\br^n$, $n\in\N$, be a domain, which is defined as a non-empty, connected, open set. For clarity, the zero vector in $\br^n$, $n\ge2$, is denoted $0_n$. 

\begin{definition} Let $f$ and $g$ be two functions defined on a domain $U$. We say $f(x)\asymp g(x)$ for $x\in U$ if there exist $c_{1,2}>0$ such that
$f(x)\le c_1 g(x)$ and $g(x)\le c_2 f(x)$ for any $x\in U$.
\end{definition}

\textcolor{black}{We identify tangent and cotangent spaces on Euclidean spaces with the underlying Euclidean spaces. Thus, for a function $f(x)$ defined on $\br^n$ we have
\be\begin{split}
&\pa_x f(x):=f_x^\prime(x):=(\pa_{x_1} f(x),\dots,\pa_{x_n}f(x))^T,\\ 
&\dd_x f=(\pa_{x_1} f(x),\dots,\pa_{x_n}f(x)),\\
&|\pa_x f(x)|=|\dd_x f(x)|=\big[(\pa_{x_1} f(x))^2+\dots+(\pa_{x_n}f(x))^2\big]^{1/2}.
\end{split}
\ee
We use $f_x^\prime$ or $\pa_x f$ interchangeably depending on what is more convenient from the notational perspective. 
}

We use several types of function spaces. First, $C^k(U)$, $k\in \mathbb N$, is the space of functions with bounded derivatives up to order $k$ with the norm
\be\label{ck-def}
\Vert h\Vert_{C^k(U)}:=\max_{|m|\le k}\Vert \pa_x^m h\Vert_{L^\infty(U)},\ m\in\N_0^n.
\ee
The subscript `0' in $C_0^k$ means that we consider the subspace of compactly supported functions, $C_0^k(U)\subset C^k(U)$. Further, $C^\infty(U)=\bigcap_{k\ge 1}C^k(U)$ and $C_0^\infty(U)=\bigcap_{k\ge 1}C_0^k(U)$.

Another type is the H{\"o}lder spaces $C^s(\br^n)$, $s>0$, $s\not\in\N$. If $s=k+\ga$, $k\in\N_0$, $0<\ga<1$, then $C^s(\br^n)$ is the space of $C^k(\br^n)$ functions (or, $L^\infty(\br^n)$ functions if $k=0$), which have H{\"o}lder continuous $k$-th order derivatives (see \cite[Definition 5.28]{Alazard2024}), with the norm
\be\label{holder}\bs
\Vert f\Vert_{C^s}:=&\Vert f\Vert_{C^k}+\max_{|m|=k}\sup_{x\in\br^n,|h|>0}\frac{|\pa_x^m (f(x+h)-f(x))|}{|h|^{\ga}},\ k>0,\\
\Vert f\Vert_{C^s}:=&\Vert f\Vert_{L^\infty}+\sup_{x\in\br^n,|h|>0}\frac{|f(x+h)-f(x)|}{|h|^{\ga}},\ k=0.
\end{split}
\ee
As before, $C_0^s(U)$ denotes the subspace of $C^s(\br^n)$ functions supported in $U$. 

For $s>0$, $s\not\in\N$, H{\"o}lder spaces coincide with Zygmund spaces, $C_*^s(\br^n)$. To define the latter, pick any $\mu_0\in\coi(\br^n)$ such that $\mu_0(\eta)=1$ for $|\eta|\le 1$, $\mu_0(\eta)=0$ for $|\eta|\ge 2$. Define $\mu_j(\eta):=\mu_0(2^{-j}\eta)-\mu_0(2^{-j+1}\eta)$, $j\in\mathbb N$ \cite[Definition 5.30]{Alazard2024} and \cite[Section 6.2]{Abels12}. Then 
\be\label{hz-sp}\begin{split}
&C_*^s(\br^n):=\{f\in L^\infty(\br^n):\,\Vert f\Vert_{C_*^s(\br^n)}<\infty\},\\ 
&\Vert f\Vert_{C_*^s(\br^n)}:=\sup_{j\in\mathbb N_0}2^{js}\Vert \CF^{-1}(\mu_j(\eta)\tilde f(\eta))\Vert_{L^\infty(\br^n)},
\end{split}
\ee
where $\tilde f=\CF_n f$. Here $\CF_n$ is the Fourier transform in $\br^n$:
\be
\tilde f(\xi)=\frac1{(2\pi)^n}\int_{\br^n}f(x)e^{i\xi\cdot x}\dd x,\ f\in L^1(\br^n),
\ee
which extends to tempered distributions \cite[Section 7.1]{hor}. We have $C_*^s(\br^n)=C^s(\br^n)$ and the norms $\Vert \cdot \Vert_{C_*^s(\br^n)}$, $\Vert \cdot \Vert_{C^s(\br^n)}$ are equivalent, for any $s>0$, $s\not\in\N$ \cite[Remark 5.31]{Alazard2024}.

The H{\"o}lder-Zygmund spaces are a particular case of the Besov spaces: $C^s(\br^n)=B^s_{p,q}(\br^n)$, where $p,q=\infty$ \cite[item 2 in Remark 6.4]{Abels12}.

The Sobolev space $H^s(\br^n)$, $s\in\br$, is the space of all tempered distributions $f$ for which 
\be
\Vert f\Vert_{H^s(\br^n)}^2=(2\pi)^{-n}\int_{\br^n} |\tilde f(\xi)|^2(1+|\xi|^2)^s\dd\xi<\infty.
\ee
$H_0^s(U)$ is the closure of $\coi(U)$ in $H^s(\br^n)$. $H_{loc}^s(U)$ is the space of all distributions on $U$ such that $\chi f\in H^s(\br^n)$ for any $\chi\in\coi(U)$. See \cite[Section 5, Notation and Background]{trev1}

\begin{definition}
Given a domain $U\subset\br^n$, $r\in\br$, and $N\in\N$, $S^r(U\times \br^N)$ denotes the vector-space of $C^\infty(U\times (\br^N\setminus0_N))$ functions, $\tilde B(x,\xi)$, having the following properties
\be\label{symbols def}\begin{split}
&|\pa_x^m \tilde B(x,\xi)|\le c_m|\xi|^{-n+\de}\ \forall m\in\N_0^n,x\in U,0<|\xi|\le 1;\\
&|\pa_x^{m_1} \pa_\xi^{m_2}\tilde B(x,\xi)|\le c_{m_1,m_2}|\xi|^{r-|m_2|},\ \forall m_1\in\N_0^n,m_2\in\N_0^N,x\in U,|\xi|\ge 1;
\end{split}
\ee
for some constants $\de>0$ and $c_m,c_{m_1,m_2}>0$. 
\end{definition}

The elements of $S^r$ are called symbols of order $r$. When we are talking about symbols of $\Psi$DOs (in which case $N=n$), we use the notation $S^r(U)$. We modify the conventional definition slightly to allow for symbols to be non-smooth at the origin. This makes our analysis more streamlined, but otherwise has no effects. The space of the corresponding $\Psi$DOs, given by 
\be
(\op(\tilde B(x,\xi))f)(y):=\frac1{(2\pi)^n}\int_{\br^n}\tilde B(x,\xi)\tilde f(\xi)e^{-i\xi\cdot y}\dd\xi,\ 
f\in\coi(U),
\ee
is denoted $L^r(U)$. By \cite[Theorem 2.1, Section I.2]{trev1}, any $\Psi$DO in $L^r(U)$ is $H_0^s(U_b)\to H_{loc}^{s-r}(U)$ continuous for any domain $U_b\Subset U$.
\textcolor{black}{The notation $\us_b\Subset\us$ means that the closure $\overline{\us_b}$ is compact and $\overline{\us_b}\subset \us$.}

Let $U,V\subset\br^n$ be two domains, and let 
\be
\Phi(x,y,\Theta)\in C^\infty(U\times V\times(\br^N\setminus0_N)),\quad \Phi:U\times V\times\br^N\to\br, 
\ee
be a nondegenerate phase function \cite[Definition 11.1]{Grig1994}. The latter property means that 
\begin{enumerate}
\item $\Phi$ is positively homogeneous of degree one in $\Theta$: $\Phi(x,y,\la\Theta)=\la\Phi(x,y,\Theta)$, $\la>0$.
\item $\dd_{(x,y,\Theta)}\Phi$ does not vanish anywhere.
\item The differentials $\dd_{(x,y,\Theta)}\Phi_{\Theta_l}^\prime$, $1\le l\le N$, are linearly independent on the set $\Sigma$: 
\be\label{crit set general}
\Sigma:=\{(y,x,\Theta)\in V\times U\times (\br^N\setminus0_N):\ \Phi_\Theta^\prime(x,y,\Theta)=0_N\}.
\ee
\end{enumerate}
The phase $\Phi$ determines \textcolor{black}{the homogeneous canonical relation \cite[Chapter VIII, Definition 5.1]{trev2}}
\be\label{canon rel general}\begin{split}
\mathcal C:=&\{(y,\dd_y\Phi(x,y,\Theta));(x,-\dd_x\Phi(x,y,\Theta)):(x,y,\Theta)\in\Sigma\}\\
\subset& (T^*V\setminus 0_{2n})\times (T^*U\setminus 0_{2n}).
\end{split}
\ee
$I^r(V\times U\times\mathcal C)$ denotes the vector space of FIOs of order $r$ given by \cite[Section 11]{Grig1994}
\be\label{FIO def}\bs
&\coi(U)\ni f\to \frac1{(2\pi)^N}\int_{\br^N}\int_U \tilde B(x,y,\Theta) f(x)e^{i\Phi(x,y,\Theta)}\dd x\dd\Theta\in\CD^\prime(V),\\
&\tilde B\in S^{r+(n-N)/2}(U\times V\times\br^N).
\end{split}
\ee

For convenience, throughout the paper we use the following convention. If a constant $c$ is used in an equation, the qualifier ‘for some $c>0$’ is assumed. If several $c$ are used in a string of (in)equalities, then ‘for some’ applies to each of them, and the values of different $c$’s may all be different. 

Additional notation and conventions are introduced as needed.

\section{Setting of the problem, assumptions, and main results}\label{sec:setting}

\subsection{GRT and its properties}\label{ssec:grt}
Let $\Phi_1(x,y):\us\times\vs\to\br^2$ be a defining function for a GRT $\R$. \textcolor{black}{This is a function such that $\s_y:=\{x\in\us:\Phi_1(x,y)=0\}$ for any $y\in\vs$, and $\R$ integrates over $\s_y$.} An open set $\us\subset\br^2$ is the image domain. We assume $y=(\al,p)$, and $\vs=\I_\al\times \I_p$ is the data domain. 
\begin{assumptions}[Properties of the GRT -  I]\label{ass:Phi}$\hspace{1cm}$
\begin{enumerate}
\item\label{Ial} $\I_\al\subset\br$ is a \textcolor{black}{closed interval with the endpoints identified, so it can be viewed as a circle},
\item $\I_p\subset\br$ is an open interval (or all of $\br$), and 
\item\label{Phi sm} $\Phi_1(x,y)=p-\Phi(x,\al)$, $\Phi\in C^\infty(\us\times \I_\al)$. 
\end{enumerate}
\end{assumptions}
Both $\us$ and $\vs$ are endowed with the usual Euclidean metric. By a partition of unity we will always identify small subsets of $\vs$ with subsets of $\br^2$. The transform $\R$ integrates over smooth curves $\s_{(\al,p)}:=\{x\in\us: p=\Phi(x,\al)\}$:
\be\label{R def}\bs
\R f(y)=& \int_{\us} f(x)W(x,y)\de(p-\Phi(x,\al))\dd x\\
=& \frac1{2\pi}\int_{\br} \int_{\us} f(x)W(x,y)e^{i\nu(p-\Phi(x,\al))} \dd x\dd\nu,\ y\in\vs,
\end{split}
\ee
where $W\in C^\infty(\us\times\vs)$. Here and everywhere below we use the convention that whenever $y$, $\al$, and $p$ appear in the same equation or sentence, then $y=(\al,p)$. We assume $f$ is compactly supported, $\text{supp} f\subset\us$, and $f$ is sufficiently smooth, so $\hat f(y):=\R f(y)$ is a continuous function. Assumption~\ref{ass:Phi} implies that $\tsp\hat f$ is compact if $\tsp f$ is compact. 
\textcolor{black}{The smoothness of $\s_y$ follows from Assumption~\ref{geom GRT}\eqref{li} below.}

The critical set of the phase $\nu\Phi_1$ is
\be\label{crit set}
\Sigma:=\{(y,x,\nu)\in\vs\times\us\times(\br\setminus0):\ p-\Phi(x,\al)=0\}.
\ee
Since $\pa_\nu(\nu\Phi_1)=\Phi_1$ and $\dd_{(x,y,\nu)}\Phi_1\not=0$, the phase $\nu \Phi_1(x,\al)$ is clean and nondegenerate \cite[Definition 21.2.15]{hor3}. The phase $\nu \Phi_1(x,\al)$ determines the homogeneous canonical relation from $T^*\us$ to $T^*\vs$
\be\label{canon rel}\bs
\mathcal C:=&\{(y,\nu(-\dd_\al\Phi(x,\al),1));(x,\nu\dd_x\Phi(x,\al)):(y,x,\nu)\in\Sigma\}.
\end{split}
\ee
Also, $\R\in I^{-1/2}(\vs\times\us,\mathcal C)$ is an FIO of order $-1/2$ from $\us$ to $\vs$ associated with $\mathcal C$ \cite[Section VIII.5]{trev2}. Hence $\R:H_0^s(\us_b)\to H_{loc}^{s+(1/2)}(\vs)$ is continuous for any $s\in\br$ and any domain $U_b\Subset U$ \cite[Corollary 25.3.2]{hor4}. 

Next we compute the following determinant \cite[equations (6.3) and (6.4), Chapter VIII]{trev2}:
\be\label{Delta det}\bs
\Delta_\Phi(x,y,\nu):=&\text{det}
\bma [\nu\Phi_1(x,\al)]_{xy}^{\prime\prime} & [\nu\Phi_1(x,\al)]_{x\nu}^{\prime\prime}\\ [\nu\Phi_1(x,\al)]_{\nu y}^{\prime\prime} & [\nu\Phi_1(x,\al)]_{\nu \nu}^{\prime\prime} \ema\\
=&\text{det}
\bma -\nu\dd_x\Phi_\al^\prime & -\Phi_\al^\prime \\ 0\quad 0 & 1   \\ -\dd_x\Phi & 0\ema 
=\nu\text{det}
\bma \dd_x\Phi \\ \dd_x \Phi_\al^\prime \ema.
\end{split}
\ee
\textcolor{black}{
We will impose conditions to ensure that the last determinant is not zero. In this case, $\mathcal C$ is a local canonical graph \cite[Definition 6.1, Chapter VIII]{trev2}.} The derivatives on the right are computed at $(x,\al,p=\Phi(x,\al))$. As is easily seen,
\be\label{Delta det prop}
\Delta_\Phi(x,y,\nu)\textcolor{black}{=\nu\Delta_\Phi(x,y,1)=:}\nu\Delta_\Phi(x,\al).
\ee
Since the determinant on the last line in \eqref{Delta det} does not depend on $p$ and $\nu$, we use the simpler notation $\Delta_\Phi(x,\al)$ instead of $\Delta_\Phi(x,y,1)$.

\begin{assumptions}[Properties of the GRT - II]\label{geom GRT}$\hspace{1cm}$
\begin{enumerate}
\item\label{li} $\dd_x\Phi(x,\al)$ and $\dd_x\Phi_\al^\prime(x,\al)$ are linearly independent on $\us\times I_\al$.
\item\label{bolk} The Bolker condition: if $\Phi(x_1,\al)=\Phi(x_2,\al)$ for some $x_1,x_2\in\us$, $x_1\not=x_2$, and $\al\in I_\al$, then $\Phi_\al^\prime(x_1,\al)\not=\Phi_\al^\prime(x_2,\al)$. 
\item\label{visib} For each $(x,\xi)\in T^*\us$, there exist $\al\in \I_\al$ and $\nu\in\br$ such that $\xi=\nu\dd_x\Phi(x,\al)$.
\item\label{pos W} $W\in C^\infty(\us\times\vs)$ and $|W(x,y)|>c$ on $\us\times\vs$.
\item\label{invert} For any domain $\us_b\Subset\us$ there exists $c=c(\us_b)>0$ such that
\be\label{inverse bound}
\Vert f\Vert_{H^{-1/2}(\us)}\le c \Vert \R f\Vert_{L^2(\vs)}\text{  for any  } f\in H_0^{-1/2}(\us_b).
\ee
\end{enumerate}
\end{assumptions}

Assumption~\ref{geom GRT}\eqref{li} implies that $\Delta_\Phi(x,y,\nu)\not=0$ on $\us\times\vs\times(\br\setminus0)$. In turn, this implies that locally $\mathcal C$ is the graph of a diffeomorphism \cite[Section VI.4]{trev2}. Assumption~\ref{geom GRT}\eqref{bolk} implies that the natural projection $\mathcal C\to T^*\vs$ is an embedding (injective immersion). Assumption~\ref{geom GRT}\eqref{visib} asserts that any singularity of $f$ is visible from the data.  
Assumption~\ref{geom GRT}\eqref{invert} implies that $\R$ is boundedly invertible in the scale of Sobolev spaces. 

\color{black}
Let us discuss the last assumption in more detail. The most difficult part of the assumption is to check the injectivity of the GRT. There is no general theory that can ascertain whether a given GRT is injective or not. The question of injectivity is studied on a case by case basis. See, e.g., \cite{fsu-08, Homan2017} where the injectivity (and stability) are shown for some families of GRTs. If the injectivity is established, stability estimates of the kind \eqref{inverse bound} are obtained using the calculus of FIO, see \cite[proof of Theorem 2.3]{fsu-08} and \cite[proof of Theorem 2]{Homan2017}.

When the GRT integrates over simple curves, the injectivity is easier to establish. The injectivity of the classical Radon transform is well known \cite{nat3}. The case when the GRT integrates over circles has been studied as well, see \cite{AgrQ96, AmbK05} and references therein. 

Consider a circular GRT, $\R$. Suppose $\R$ integrates $C_0^\infty(\br^2)$ functions over circles of arbitrary radius, and their centers are confined to a set $\Xi$. One calls $\Xi$ a set of injectivity of $\R$ if $\R$ is injective. A complete description of sets of injectivity was first obtained in \cite{AgrQ96}. For example, the boundary of any bounded set in the plane is an injectivity set for $\R$ \cite[Corollary 5]{AmbK05}.
\color{black}

\subsection{The function to be reconstructed}

Suppose a compactly supported distribution, $f\in\CE^\prime(\us)$, is given by
\be\label{f-orig}
f(x)=\frac1{2\pi}\int_\br \tilde f(x,\la)e^{i\la H(x)}\dd\la,\ x\in\us,
\ee
where 
\be\label{f-lim}\begin{split}
&H\in C^\infty(\us),\quad \dd H\not=0 \text{  on  } \us;\\
&\tilde f(x,\la)= -i\tilde f_0(x)\la^{-1}+\tilde R(x,\la), \forall x\in \us,|\la|\ge 1;\\ 
& \tilde f(x,\la)\equiv0\ \forall x\in\us\setminus \us_b\text{ for some domain }\us_b\Subset \us;\\
& \tilde f\in S^{-1}(\us\times \br),\ 
\tilde f_0\in\coi(\us_b),\ \tilde R\in S^{-2}(\us\times \br), 
\end{split}
\ee 
for some $\tilde f_0$ and $\tilde R$. From \eqref{f-orig} and \eqref{f-lim} it follows that 
\be\label{ssup f}
\ssp(f)\subset \s:=\{x\in\us:\ H(x)=0\}. 
\ee
Thus $\s$ is a smooth curve (submanifold). 

Equations \eqref{f-orig} and \eqref{f-lim} imply that $f\in I^{-1}(\us,\s)$ is a conormal distribution \cite[Section 18.2]{hor3} and
\be\label{f-lim lo ev}\begin{split}
&f(x+\Delta x)\sim \tilde f_0(x)(\text{sgn} h/2),\ h:=\dd H(x)\cdot \Delta x,|\Delta x|\to0,\ \forall x\in\s.
\end{split}
\ee
Thus, $\tilde f_0(x)$ is the value of the jump of $f$ across $\s$ at $x\in\s$. \textcolor{black}{For this reason, in what follows we use the notation $\Delta f(x)$ instead of $\tilde f_0(x)$. The direction for computing the jump value across $\s$ is encoded in \eqref{f-orig}, \eqref{f-lim}.}

Pick any $x\in\s$ and $y\in\vs$ such that $\s_y$ is tangent to $\s$ at $x$. The curvatures of $\s$ and $\s_y$ at $x$ are given by
\be\label{curv S}
\varkappa_\s(x)=-\frac{\dd_x^2 H(x)(e,e)}{|\dd_x H(x)|},\quad
\varkappa_{\s_y}(x)=-\frac{\dd_x^2\Phi(x,\al)(e,e)}{|\dd_x \Phi(x,\al)|},
\ee
where $e$ is a unit vector tangent to $\s$ at $x$. If $\dd_x H(x)\cdot\dd_x \Phi(x,\al)<0$, the curvatures are not consistent with each other. In this case we flip the $p$-axis and replace $\Phi$ with $-\Phi$ to make them consistent. Thus, we can assume without loss of generality that 
\be\label{two vecs}
\dd_x H(x)\cdot\dd_x \Phi(x,\al)>0.
\ee

\begin{assumptions}[Properties of $f$]\label{ass:f props}$\hspace{1cm}$
\begin{enumerate}
\item\label{f first} $f$ satisfies \eqref{f-orig}, \eqref{f-lim}.
\item\label{del curv} For each pair $(x,y)\in\s\times\vs$ such that $\s_y$ is tangent to $\s$ at $x$, one has $\varkappa_{\s}(x)-\varkappa_{\s_y}(x)>0$.
\end{enumerate}
\end{assumptions}

\textcolor{black}{The requirement that the difference of the curvatures in Assumption~\ref{ass:f props}\eqref{del curv} be positive is not restrictive.} Basically, it says that $\varkappa_{\s}(x)\not=\varkappa_{\s_y}(x)$ whenever $\s$ and $\s_y$ are tangent at some $x$. If $\varkappa_{\s}(x)-\varkappa_{\s_y}(x)<0$, we change both functions: replace $H$ with $-H$, $\Phi$ with $-\Phi$, and flip the $p$-axis.

It may happen that for some $y\in\vs$, $\s_y$ is tangent to $\s$ at several distinct points $x_l$, and $\Phi$ cannot be made consistent with $H$ in the above sense at all $x_l$. Due to the linearity of the GRT, we can use a partition of unity and, for each $l$, adjust $\Phi$ and $H$ based solely on the pair $(x_l,y)$ independently of the adjustments at all the others pairs. This is always tacitly assumed in what follows.

\color{black}
Assumption~\ref{ass:f props}\eqref{del curv} imposes a nontrivial restriction on the types of functions $f$ we consider. The assumption prevents any $\s_y$ from being tangent to $\s$ of order higher than one, i.e., only simple tangency between $\s_y$ and $\s$ is allowed. In particular, $\s_y$ cannot be tangent to $\s$ along an entire curve segment. While the latter is generally not a significant issue in medical CT, such situations are more common in industrial applications of X-ray CT when scanning artificial (man-made) objects. In these cases, $\s_y$ are straight lines, and $\s$ may contain a flat segment.   

Mathematical analysis of these cases presents additional challenges. This is because $\hat f(y)$ is generally more singular in a neighborhood of the exceptional $y$ described above than in the case of simple tangency. An analysis of these more singular cases is beyond the scope of the paper.
\color{black}

\subsection{GRT data and the reconstruction algorithm}\label{ssec:data_recon}

Discrete data $g(y_j)$ are given at the points
\be\label{data-pts}
y_j=(\Delta\al j_1,\Delta p j_2)\in\vs,\ \Delta p=\e,\ \Delta\al=\mu\e,\ j=(j_1,j_2)\in\mathbb Z^2,
\ee
The interpolated data, denoted $\hat f_\e(y)$, is computed by: 
\be\label{interp-data}\bs
&\hat f_\e(y):=\sum_{y_j\in\vs} \ik_\e(y-y_j)\hat f(y_j),\ y\in\vs,\\
&\ik_\e(y):=\ik(y/\e),\ \ik(y):=\ik_\al(\al/\mu)\ik_p(p),
\end{split}
\ee
where $\ik$ is an interpolation kernel. 
\begin{assumptions}[Properties of the interpolation kernel, $\ik$]\label{ass:interp ker} 
Let $\ik_*$ denote any of the functions $\ik_\al$ and $\ik_p$. One has
\begin{enumerate}
\item\label{ikcont} $\ik_*\in C_0^2(\br)$.
\item\label{ikeven} $\ik_*$ is even: $\ik_*(u)=\ik_*(-u)$, $u\in\br$.
\item\label{ikexact} $\ik_*$ is exact up to order one, i.e. 
\be\label{exactness}
\sum_{j\in\mathbb Z} j^m\ik_*(u-j)\equiv u^m,\ m=0,1,\ u\in\br.
\ee
\end{enumerate}
\end{assumptions}

Assumption~\ref{ass:interp ker}\eqref{ikexact} with $m=0$ implies that $\ik_*$ is normalized: 
\be\label{norm-deriv}\begin{split}
1&=\int_0^1\sum_{j\in\mathbb Z}\ik_*(u-j)\dd u=\int_\br\ik_*(u)\dd u.
\end{split}
\ee

Let $f$ satisfy Assumption~\ref{ass:f props} and $\R$ satisfy Assumptions~\ref{ass:Phi}, \ref{geom GRT}. 
By Lemma~\ref{lem:conorm} below, $\Vert \hat f_\e\Vert_{L^2(\vs)}\le c$, $0<\e\ll1$. 
Reconstruction is achieved by minimizing the quadratic functional
\be\label{min pb}
f_\e=\text{argmin}_{f\in H_0^1(\us_b)}\Psi(f),\ 
\Psi(f):=\Vert \R f-\hat f_\e \Vert_{L^2(\vs)}^2+\kappa \e^{3}\Vert \pa_x f\Vert_{L^2(\us)}^2,
\ee
with some fixed regularization parameter $\kappa>0$ and with $\us_b$ the same as in \eqref{f-lim} (or with any other $\us_b^\prime$ that satisfies $\us_b\subset\us_b^\prime\Subset U$). 

The following result is proven in Appendix~\ref{sec: prf lem unique sol}.

\begin{lemma}\label{lem:unique sol} Suppose Assumptions~\ref{ass:Phi}--
\ref{ass:interp ker} are satisfied. The solution to \eqref{min pb}, $f_\e$, exists and is unique for each $0<\e\ll1$.
\end{lemma}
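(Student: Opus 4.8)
The plan is to recast \eqref{min pb} as the minimization of a bounded, symmetric, coercive quadratic form on a Hilbert space and then invoke the Lax--Milgram lemma. Fix $\e\in(0,1)$. After enlarging $\us_b$ slightly if necessary, I may assume $\overline{\us_b}\subset\us$; since $H_c^1(\us_b)$ as literally defined is not complete, I would work in the closed subspace $\CH:=\{f\in H^1(\br^2):\ \tsp f\subset\overline{\us_b}\}$, on which, because $\us_b$ is bounded, the Poincar\'e inequality makes the $H^1$-norm equivalent to $\|\nabla f\|_{L^2(\us)}$. Writing $\Psi(f)=a(f,f)-2\ell(f)+\|g_\e\|_{L^2(\vs)}^2$ with
\be
a(f,h):=\langle\R f,\R h\rangle_{L^2(\vs)}+\kappa\e^3\langle\nabla f,\nabla h\rangle_{L^2(\us)},\quad \ell(f):=\langle\R f,g_\e\rangle_{L^2(\vs)},
\ee
the problem becomes: minimize the quadratic functional associated with the symmetric bilinear form $a$ and the linear functional $\ell$ over $\CH$.

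First I would check that $a$ and $\ell$ are well-defined and bounded on $\CH$. Since $\R\in I^{-1/2}(\vs\times\us,\mathcal C)$ maps $H_c^1(\us)$ continuously into $H_{loc}^{3/2}(\vs)$, and since by Assumption~\ref{ass:Phi} the support of $\R f$ is contained in a fixed compact subset of $\vs$ whenever $\tsp f\subset\overline{\us_b}$, one obtains $\|\R f\|_{L^2(\vs)}\le c\|f\|_{H^1(\us)}$ for all $f\in\CH$. Combined with the bound $\|g_\e\|_{L^2(\vs)}\le c$ for $0<\e\ll1$ (which follows from Lemma~\ref{lem:conorm}), this gives $|a(f,h)|\le c\|f\|_{H^1}\|h\|_{H^1}$ and $|\ell(f)|\le c\|f\|_{H^1}$. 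Coercivity is where the Tikhonov term does the work: for $f\in\CH$,
\be
a(f,f)\ge\kappa\e^3\|\nabla f\|_{L^2(\us)}^2\ge c\kappa\e^3\|f\|_{H^1(\us)}^2
\ee
by the Poincar\'e inequality on the bounded set $\us_b$. (Note that Assumption~\ref{geom GRT}\eqref{invert} is not needed for existence and uniqueness at a fixed $\e$; it enters only later, in estimates that must be uniform in $\e$.)

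Given these properties, the Lax--Milgram lemma --- equivalently, the Riesz representation theorem applied to the inner product $a(\cdot,\cdot)$ on $\CH$ --- yields a unique $f_\e\in\CH$ with $a(f_\e,h)=\ell(h)$ for all $h\in\CH$; the identity $\Psi(f_\e+h)=\Psi(f_\e)+a(h,h)$ together with the coercivity of $a$ then shows that this $f_\e$ is the unique minimizer of $\Psi$ over $\CH$, proving the lemma. I do not expect any genuine obstacle; the only point needing a little care is the function-space bookkeeping described above --- passing to the complete space $\CH$ (or, equivalently, to $H_0^1$ of a slightly larger domain), since the stated space $H_c^1(\us_b)$ is not itself complete.
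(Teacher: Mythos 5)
Your proof is correct, but it closes the argument by a different mechanism than the paper. The paper establishes the strong convexity inequality for $\Psi$ on $H_c^1(\us_b)$ directly (via the parallelogram-type inequalities \eqref{ineq grad}--\eqref{interm ineq}), and crucially converts the modulus $\Vert \R (f_1-f_2)\Vert_{L^2(\vs)}^2+\Vert\nabla(f_1-f_2)\Vert_{L^2(\us)}^2$ into an $H^1$-modulus by invoking the stability estimate \eqref{inverse bound} of Assumption~\ref{geom GRT}\eqref{invert} together with the interpolation-type bound \eqref{sobolev ineq}; it then cites abstract convex-optimization existence/uniqueness theorems for proper, continuous, strongly convex functionals. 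You instead exploit the quadratic structure explicitly: write $\Psi(f)=a(f,f)-2\ell(f)+\Vert g_\e\Vert^2$, prove boundedness of $a$ and $\ell$ from the FIO mapping property of $\R$, get coercivity purely from the Tikhonov term plus Poincar\'e (with an $\e$-dependent constant, which is all the lemma requires), and conclude by Lax--Milgram/Riesz plus the identity $\Psi(f_\e+h)=\Psi(f_\e)+a(h,h)$. Your observation that Assumption~\ref{geom GRT}\eqref{invert} is not needed at fixed $\e$ is accurate; the paper uses it here mainly because the same estimate is what later yields $\e$-uniform bounds (Lemma~\ref{lem:bdd sol}), so its proof of strong convexity is stylistically aligned with the rest of the analysis, whereas yours is more elementary and self-contained. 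Your handling of the function space is also legitimate and, if anything, more careful than the paper's: $H_c^1(\us_b)$ is indeed not complete, the paper's cited theorems are formulated on Hilbert spaces (or closed convex subsets thereof), and the paper itself allows replacing $\us_b$ by any bounded domain containing it, so passing to the closed subspace of $H^1(\br^2)$-functions supported in $\overline{\us_b}$ (equivalently $H_0^1$ of a slightly enlarged domain) changes nothing downstream; the only cosmetic caveat is that the minimizer you produce lives in that closure rather than literally in $H_c^1(\us_b)$, the same informality the paper's own argument glosses over.
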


Our first main result is the following theorem, which is proven in Sections~\ref{GRT-behavior}--\ref{sec:DTB fn}.

\begin{theorem}\label{thm:main res} Pick any $x_0\in\s$ and $A_0>0$. Suppose there is only one $y_0=(\al_0,p_0)\in\vs$ such that $\s_{y_0}$ is tangent to $\s$ at $x_0$. Suppose Assumptions~\ref{ass:Phi}--\ref{ass:interp ker} are satisfied and $\mu\Phi_\al^\prime(x_0,\al_0)$ is irrational. One has
\be\label{DTB eq}
\lim_{\e\to0^+}\big(f_\e(x_0+\e\chx)-f_\e(x_0)\big)=\Delta f(x_0)\Upsilon\big(\vec\Theta_0\cdot\chx\big),\
|\chx|<A_0,
\ee
where the limit is uniform in $\chx$ and
\be\label{DTB aux}\bs
&\Upsilon(r)=\int_{\br} h_0(u)\int_{u_1}^{u_1+r}R(t)\dd t\dd u,\ \vec\Theta_0:=\frac{\dd_x \Phi(x_0,\al_0)}{|\dd_x \Phi(x_0,\al_0)|},\\
&h_0(u)=\int_\br \ik_\al(s)\ik_p(\mu s\Phi_\al^\prime(x_0,\al_0)+u)\dd s,\ u_1=\frac{u}{|\dd_x \Phi(x_0,\al_0)|},\\
&R(t)=\CF_{\la\to t}^{-1}\left(\left[1+\frac{\kappa}{2\pi\rho_0}|\la|^3\right]^{-1}\right),\
\rho_0=\frac{W^2(x_0,y_0)|\dd_x \Phi(x_0,\al_0)|}{|\Delta_\Phi(x_0,\al_0)|}.
\end{split}
\ee
Moreover, 
\be
\Upsilon(0)=0,\quad \Upsilon(\pm\infty)=\pm1/2.
\ee
Thus $\Upsilon(+\infty)-\Upsilon(-\infty)=1$, and $\Upsilon$ is indeed a DTB function.
\end{theorem}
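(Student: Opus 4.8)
The plan is to reduce the statement about $f_\e$ to a statement about a more tractable function $G_\e$, compute the limit of $G_\e(x_0+\e\chx)-G_\e(x_0)$ explicitly, and then trace how all the geometric data of the GRT enter the final formula. First, by Lemma~\ref{lem:unique sol} the solution $f_\e$ exists and is unique, and it satisfies the normal (Euler--Lagrange) equation for $\Psi$, namely $(\R^*\R+\kappa\e^3(-\Delta))f_\e=\R^* g_\e$. One microlocalizes this near $x_0$: away from $\s$ the solution is smooth and contributes nothing to the difference $f_\e(x_0+\e\chx)-f_\e(x_0)$ in the limit, so only the contribution of the canonical direction $\vec\Theta_0$ survives. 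Using the FIO calculus for $\R\in I^{-1/2}$ and the Bolker condition (Assumption~\ref{geom GRT}\eqref{bolk}), $\R^*\R$ is a $\Psi$DO, and near $x_0$ the equation becomes $\mathcal Tf_\e=F_\e$ with $\mathcal T$ an elliptic $\Psi$DO whose symbol, restricted to the direction $\vec\Theta_0$, is $\rho_0^{-1}(1+\tfrac{\kappa}{2\pi\rho_0}|\la|^3)$ up to lower-order terms; here $\rho_0$ is exactly the Jacobian factor from $\R^*\R$ evaluated along the tangent ray, explaining the formula for $\rho_0$ in terms of $W^2$, $|\Phi_x'|$ and $|\Delta_\Phi|$. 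Inverting, $f_\e$ near $x_0$ agrees (to the order LRA sees) with $G_\e:=\op((1+\tfrac{\kappa}{2\pi\rho_0}|\la|^3)^{-1})F_\e^{(l)}$ applied to the leading singular part of $F_\e$, which is the content invoked in Lemma~\ref{lem:femge}.

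Second, I would compute $\Delta G_0(\chx)=\lim_{\e\to0}(G_\e(x_0+\e\chx)-G_\e(x_0))$. The leading singular part of $F_\e$ near $x_0$ comes from $\R^*$ applied to the interpolated data $g_\e$, and $g_\e$ near the tangency point $y_0$ is, by the analysis of $\R f$ near its singular support (the formula \eqref{g-lead-sing} referenced in section~\ref{GRT-behavior}), a smoothed jump whose profile is governed by the difference of curvatures $\varkappa_\s-\varkappa_{\s_y}>0$ (Assumption~\ref{ass:f props}\eqref{del curv}) and by the interpolation kernel $\ik_\e=\ik_\al\ik_p$. Rescaling $y=y_0+\e\chy$ and $x=x_0+\e\chx$ and passing to the limit, the sum over the grid $y_j$ turns into the kernel average $h_0(u)=\int_\br\ik_\al(s)\ik_p(\mu s\,\Phi_\al'(x_0,\al_0)+u)\,\dd s$ — this is where the irrationality of $\mu\Phi_\al'(x_0,\al_0)$ is essential, via Weyl equidistribution, to replace the discrete grid sum by an integral uniformly in $\chx$. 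The variable $u$ is the scaled signed distance to $\s$ along $\vec\Theta_0$, rescaled by $|\Phi_x'(x_0,\al_0)|$, giving $u_1=u/|\Phi_x'(x_0,\al_0)|$. Convolving with the inverse-symbol kernel $R(t)=\CF^{-1}_{\la\to t}((1+\tfrac{\kappa}{2\pi\rho_0}|\la|^3)^{-1})$ produces the inner integral $\int_{u_1}^{u_1+r}R(t)\,\dd t$ with $r=\vec\Theta_0\cdot\chx$, and integrating against $h_0$ yields $\Upsilon(r)$. The normalization constants that appear in the principal symbol of $\R^*\R$ cancel against those in $\rho_0$, leaving the clean form in \eqref{DTB aux}.

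Third, the tail/normalization claims $\Upsilon(0)=0$ and $\Upsilon(\pm\infty)=\pm1/2$. The first is immediate since $\int_{u_1}^{u_1}R\,\dd t=0$. For the second, note $\int_\br R(t)\,\dd t=[1+\tfrac{\kappa}{2\pi\rho_0}|\la|^3]^{-1}\big|_{\la=0}=1$, so as $r\to\pm\infty$ the inner integral $\int_{u_1}^{u_1+r}R(t)\,\dd t\to\pm\int_{u_1}^{\pm\infty}R(t)\,\dd t$, and by the evenness of $R$ (it is the inverse Fourier transform of an even, real function, hence even) combined with $\int_\br R=1$ one gets, after integrating against $h_0$ and using $\int_\br h_0(u)\,\dd u=\big(\int\ik_\al\big)\big(\int\ik_p\big)=1$ from the normalization \eqref{norm-deriv}, that $\Upsilon(\pm\infty)=\pm1/2$; the odd part in $u_1$ integrates to a finite constant that is killed in the symmetric limit, while more carefully one checks $\Upsilon(r)+\Upsilon(-r)$ is constant and equals $0$ by the $u\to-u$ symmetry of $h_0$ (evenness of $\ik_\al,\ik_p$, Assumption~\ref{ass:interp ker}\eqref{ikeven}), forcing the constant to be $0$ and hence $\Upsilon(+\infty)-\Upsilon(-\infty)=1$.

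The main obstacle I anticipate is the uniform-in-$\chx$ passage from the discrete grid sum to the integral $h_0$: one must show the Weyl sums $\sum_j\ik_\al(\cdots)\ik_p(\cdots)$ converge to their mean uniformly on $|\chx|\le A$ as $\e\to0$, which requires the irrationality hypothesis together with quantitative control (a quantitative equidistribution estimate or a Fourier-series argument exploiting $\ik_*\in C_0^2$ so the Fourier coefficients decay) — and simultaneously controlling the error between $f_\e$ and the idealized $G_\e$, i.e. the justification that the lower-order symbol terms and the off-diagonal (non-$\vec\Theta_0$) contributions are $o(1)$ in $L^\infty$ on the $\e$-ball uniformly; this is where the bound $\Vert F_\e\Vert_{L^\infty}\le c$ from section~\ref{sec:RHS bdd} and the smoothing properties of $\mathcal T^{-1}$ on $H^{-1/2}$-type spaces must be combined carefully.
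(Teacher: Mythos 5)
Your proposal is correct in outline and follows essentially the same route as the paper: reduction via the normal equation and a parametrix to $G_\e=\op([\tilde D_0+\kappa|\e\xi|^3]^{-1})F_\e$ (Lemma~\ref{lem:femge}), analysis of the leading singular part of $\R^*g_\e$ with the curvature difference and the interpolation kernel producing $h_0$, Weyl equidistribution from the irrationality hypothesis to pass from the grid sum to an integral, convolution with $R$, and the evenness/normalization argument for $\Upsilon(\pm\infty)=\pm1/2$. The only imprecision is where the irrationality enters: in the paper it is not needed to form $h_0$ (which comes from a plain Taylor expansion of the $\al$-integral in \eqref{h_eps}), but rather to equidistribute the fractional parts of $P(\e\mu j_1)/\e$ mod $1$, exploiting the $1$-periodicity of $G_0$ in its second argument, when summing over $j_1$.
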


In the proof of Theorem~\ref{thm:main res} we assume without loss of generality that $\al_0=0$. In the case of redundant data we obtain the following generalization, which is proven in Section~\ref{sec:red dat}.

\begin{theorem}\label{thm:main res mult} Pick any $x_0\in\s$ and $A_0>0$. Suppose there exist finitely many $y_l=(\al_l,p_l)\in\vs$ such that $\s_{y_l}$ is tangent to $\s$ at $x_0$. Suppose Assumptions~\ref{ass:Phi}--
\ref{ass:interp ker} are satisfied and all $\mu\Phi_\al^\prime(x_0,\al_l)$ are irrational. One has
\be\label{DTB eq mult}
\lim_{\e\to0^+}\big(f_\e(x_0+\e\chx)-f_\e(x_0)\big)=\Delta f(x_0)\frac{\tsum_l\nu_l\Upsilon_l\big(\vec\Theta_0\cdot\chx\big)}{\tsum_l\nu_l},\ |\chx|<A_0,
\ee
where the limit is uniform in $\chx$ and
\be\label{DTB aux mult}\bs
&\Upsilon_l(r)=\int_{\br} h_l(u)\int_{u_l}^{u_l+r}R(t)\dd t\dd u,\ \vec\Theta_0:=\frac{\dd_x H(x_0)}{|\dd_x H(x_0)|},\\
&h_l(u)=\int_\br \ik_\al(s)\ik_p(\mu s\Phi_\al^\prime(x_0,\al_l)+u)\dd s,\ u_l=\frac{u}{|\dd_x \Phi(x_0,\al_l)|},\\
&R(t)=\CF_{\la\to t}^{-1}\bigg(\bigg[1+\frac{\kappa}{2\pi\textstyle\sum_l\nu_l}|\la|^3\bigg]^{-1}\bigg),\
\nu_l:=\frac{W^2(x_0,y_l)|\dd_x \Phi(x_0,\al_l)|}{|\Delta_\Phi(x_0,\al_l)|}.
\end{split}
\ee
\end{theorem}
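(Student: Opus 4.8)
The plan is to reduce Theorem~\ref{thm:main res mult} to Theorem~\ref{thm:main res} by following the same chain of reductions outlined in the introduction — passing from $f_\e$ to the equation $\mathcal T f_\e=F_\e$, then to the auxiliary function $G_\e$ via Lemma~\ref{lem:femge}, and finally computing the local limit of $G_\e$ — but now bookkeeping the contributions of all the tangency points $y_l$ simultaneously. Because of Assumption~\ref{geom GRT}\eqref{bolk} and the fact that there are only finitely many $y_l$, a partition of unity in the data domain $\vs$ separates the neighborhoods of the distinct $y_l$, so that each piece of data contributes independently to the singular part of $F_\e$ near $x_0$. The operator $\mathcal T=\R^*\R+\kappa\e^3(-\Delta)$ has, microlocally near the covector conormal to $\s$ at $x_0$, a symbol whose leading behavior is a \emph{sum} over $l$ of the principal-symbol contributions of $\R^*\R$ along the $l$-th branch, each of size $\rho_l$ (up to the universal geometric factors computed in \eqref{Delta det}), plus the regularization term $\kappa\e^3|\xi|^2$. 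This is exactly why $\sum_l\rho_l$ replaces the single $\rho_0$ in the denominator of $R(t)$.

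First I would set up, as in section~\ref{GRT-behavior}, the leading-order description of $g=\R f$ near its singular support, which is now the union of the caustic-like sets coming from the several tangencies; the formula \eqref{g-lead-sing} will have one term per $l$, each an oscillatory integral whose phase is governed by $\Phi(x,\al_l)$ and whose amplitude carries the weight $W(x_0,y_l)$ and the curvature mismatch $\varkappa_\s-\varkappa_{\s_{y_l}}>0$ guaranteed by Assumption~\ref{ass:f props}\eqref{del curv}. Then, following section~\ref{sec:reg_sol}, I would derive the equation $\mathcal T f_\e=F_\e$, observe that $F_\e$ decomposes (modulo lower-order terms that vanish in the limit) into a sum $\sum_l F_\e^{(l)}$ with each $F_\e^{(l)}$ built from the data near $y_l$, and introduce $G_\e$ by freezing the symbol of $\mathcal T$ at $x_0$ and at the relevant conormal direction. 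The uniform bound $\Vert F_\e\Vert_{L^\infty}\le c$ from section~\ref{sec:RHS bdd} goes through verbatim since it is just a finite sum of the single-tangency bounds. Next, as in sections~\ref{sec:fplt} and \ref{sec:DTB fn}, I would compute $\Delta F_0(\chx)$ and then invert the frozen symbol of $\mathcal T$. The crucial point is that the frozen symbol is $1\cdot\big(\sum_l\rho_l\big)+\kappa\e^3\cdot\text{(Laplacian symbol)}$ after rescaling to the $\e$-neighborhood, so dividing through by $\sum_l\rho_l$ produces the kernel $R(t)=\CF^{-1}\big[(1+\tfrac{\kappa}{2\pi\sum_l\rho_l}|\la|^3)^{-1}\big]$, shared by all branches, while the interpolation/geometry factors remain branch-dependent, giving $h_l$ and $u_l$ as in \eqref{DTB aux mult}. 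The irrationality of each $\mu\Phi_\al^\prime(x_0,\al_l)$ is used, exactly as in the single-tangency case, to invoke equidistribution (Weyl's theorem) and replace the discrete sum over the grid points $y_j$ near $y_l$ by the integral defining $h_l$; since the $y_l$ are finitely many and separated, the equidistribution arguments are carried out independently for each $l$.

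I expect the main obstacle to be the interaction between the different branches inside the single operator $\mathcal T$: while $F_\e$ splits cleanly by a partition of unity in $\vs$, the symbol of $\mathcal T=\R^*\R+\kappa\e^3(-\Delta)$ near the conormal covector at $x_0$ receives contributions from \emph{all} the $y_l$ at once (this is precisely the redundancy), and one must show that the cross terms — arising when the stationary phase / canonical-relation analysis of $\R^*\R$ mixes the $l$-th incoming branch with the $l'$-th outgoing branch, $l\ne l'$ — are genuinely lower order and drop out in the limit. This is where the Bolker condition (Assumption~\ref{geom GRT}\eqref{bolk}) does the real work: it ensures the canonical relation $\mathcal C$ is an embedded canonical graph, so that $\R^*\R\in L^0$ with a well-defined principal symbol equal to the sum of the pointwise contributions $\rho_l$, with no off-diagonal FIO pieces. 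Once that is established, the rest is a matter of carefully tracking constants through the same computations as in the proof of Theorem~\ref{thm:main res}, and the final averaged form $\big(\sum_l\rho_l\Upsilon_l\big)/\big(\sum_l\rho_l\big)$ emerges because the right-hand side is a sum of branch contributions while the operator being inverted scales with the total $\sum_l\rho_l$.
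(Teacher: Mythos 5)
Your proposal follows essentially the same route as the paper's proof: reuse the entire single-tangency machinery ($\mathcal T f_\e=F_\e$, the auxiliary $G_\e$, boundedness of $F_\e$, branch-by-branch computation of $\Delta F_0$ with per-branch equidistribution), with the only structural change being that the frozen symbol of $\R^*\R$ at the conormal direction becomes the sum $\sum_l\tilde D_l=2\pi\sum_l\rho_l$ (justified by Bolker, so no cross-branch FIO terms), which yields the common kernel $R$ and the weighted average $\big(\sum_l\rho_l\Upsilon_l\big)/\sum_l\rho_l$, exactly as in \eqref{tilde B mult}--\eqref{upsilon simple mult}. Minor slips (e.g.\ $\R^*\R$ is of order $-1$, not $0$, and the dropped factor $2\pi$ before rescaling) do not affect the argument.
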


\color{black}
Taken together, Theorems~\ref{thm:main res} and \ref{thm:main res mult} provide an easy to use characterization of the reconstructed image in an $O(\e)$-size neighborhood of an edge. This characterization is in terms of universal functions $\Upsilon_0$ and $\Upsilon_l$, which are independent of $f$ and can be easily precomputed. The only contribution from $f$ is $\Delta f(x_0)$, the value of the jump of $f$ at $x_0$, which enters the formulas \eqref{DTB eq} and \eqref{DTB eq mult} as a multiplicative factor. Although the proofs are quite extensive, the final result is satisfying due to its simple form and straightforward applicability.
\color{black}

\color{black}
\subsection{Example}
To briefly illustrate the above theorems, we consider the classical Radon transform in the plane that integrates over the lines $\s_{(\al,p)}=\{x\in\br^2:\vec\al\cdot x=p\}$, $\vec\al:=(\cos\al,\sin\al)$. Then $\I_\al=[0,2\pi)\sim S^1$, $\I_p=\br$, $\Phi(x,\al)=\vec\al\cdot x$, $W(x,y)\equiv 1$, and
\be\label{CRT stuff}\begin{split}
&\dd_x\Phi(x,\al)=\vec\al,\ |\dd_x\Phi(x,\al)|=1,\ \Phi_\al^\prime(x,\al)=\vec\al^\perp\cdot x,\\ 
&\Delta_\Phi=\det\bma\vec\al \\ \vec\al^\perp\ema=1,\ \vec\al^\perp:=(-\sin\al,\cos\al).
\end{split}
\ee

Pick any $x_0\in\s$. Let $\vec\al_0$ be perpendicular to $\s$ at $x_0$. Strictly speaking, there are two data points $y_1=(\al_0,\vec\al_0\cdot x_0)$ and $y_2=(\al_0+\pi,-\vec\al_0\cdot x_0)$ such that $\s_{y_l}$ is tangent to $\s$ at $x_0$. Hence we should use Theorem~\ref{thm:main res mult}.

From \eqref{DTB aux mult} and \eqref{CRT stuff}, $u_l=u$, $\nu_l=1$, and
\be\label{DTB aux mult CRT}\bs
&\Upsilon_l(r)=\int_{\br} h_l(u)\int_u^{u+r}R(t)\dd t\dd u,\\
&h_l(u)=\int_\br \ik_\al(s)\ik_p(\mu s\vec\al_l^\perp\cdot x_0+u)\dd s,\ l=1,2,\\
&R(t)=\CF_{\la\to t}^{-1}\bigg(\big[1+\frac{\kappa/2}{2\pi}|\la|^3\big]^{-1}\bigg).
\end{split}
\ee
By Assumption~\ref{ass:interp ker}\eqref{ikeven}, $\ik_\al$ is even and $\vec\al_1^\perp\cdot x_0=-\vec\al_2^\perp\cdot x_0$. Therefore, $h_1(u)\equiv h_2(u)$ and (cf. \eqref{DTB eq mult})
\be
\frac{\tsum_l\nu_l\Upsilon_l(r)}{\tsum_l\nu_l}
=\int_{\br} h_1(u)\int_u^{u+r}R(t)\dd t\dd u.
\ee
This is the formula we would have obtained by formally using Theorem~\ref{thm:main res} and replacing $\kappa$ with $\kappa/2$. The result is not surprising, as $\s_{y_1}$ and $\s_{y_2}$ define the same tangent line. 

See Section~\ref{sec:numerics} for an example involving the GRT that integrates over circles. The functions $\Upsilon_l$ have been used to compute the predicted behavior of the reconstruction (the DTB function), which is shown as a blue curve in Figure~3.

\color{black}

\section{Behavior of $\R f$ near its singular support}\label{GRT-behavior}

Let $\Gamma$ be the set of all $y\in\vs$ such that $\s_y$ is tangent to $\s$. Since $\R$ is an FIO with the canonical relation \eqref{canon rel}, we have
\be\label{gamma can rel}
\Gamma=\ssp \hat f,\ \WF(\hat f)\subset N^*\Gamma=\mathcal C\circ N^*\s.
\ee
Here $N^*\s$ is the conormal bundle of $\s$, and similarly for $\Gamma$ \cite[section I.6]{trev1}. In particular, $\hat f$ is smooth away from $\Gamma$. In this section we describe the leading singular behavior of $\hat f$ near $\Gamma$.

Fix any $\tilde y\in\Gamma$, and let $\tilde x\in\s$ be the corresponding point of tangency, see Figure~\ref{fig:tangency}. Let $V$ be a sufficiently small neighborhood of $\tilde y$. Using a partition of unity, the linearity of $\R$, and Assumption~\ref{ass:f props}\eqref{del curv}, we can assume without loss of generality that $\s_y$, $y\in\Gamma\cap V$, is tangent to $\s$ only at one point. \textcolor{black}{We underscore that the main goal of this section is to study the singularities of $\R f$. The way these singularities are mapped into the reconstructed image is studied in Sections~\ref{sec:fplt}, \ref{sec:DTB fn}, and \ref{sec:red dat}.}

Substitute \eqref{f-orig} into \eqref{R def}. The asymptotics as $\la\to\infty$ of the resulting integral with respect to $x$ is computed by the stationary phase method \cite[Chapter VIII, eqs. (2.14)--(2.20)]{trev2} 
\textcolor{black}{using the properties of the Hessian at the stationary point established in Lemma~\ref{lem: matr M} below}:
\be\label{GRT-f-inner}\begin{split}
\hat f(y)=&\frac1{2\pi}\int_{\br} \tilde G(y,\la)\dd\la,\ y\in V,\\
\tilde G(y,\la):=&\frac1{2\pi}\int_{\br}\int_{\us} \tilde f(x,\la)e^{i\la H(x)}W(x,y)e^{i\nu(p-\Phi(x,\al))}\dd x\dd\nu\\
=&\frac{|\la|}{2\pi}\int_{\br}\int_{\us} W(x,y)\tilde f(x,\la)e^{i\la[\nu_1(p-\Phi(x,\al))+ H(x)]}\dd x\dd\nu_1\\
=&\left(\tilde f(x_*,\la)\frac{W(x_*,y)}{|\det M(y)|^{1/2}}
\left(\frac{2\pi}{|\la|}\right)^{1/2}e(-\text{sgn}\la/2)+\tilde R(y,\la)\right)\\
&\quad\times e^{i\la H(x_*)},\ |\la|\ge1,\ \tilde R\in S^{-3/2}(\vs\times\br),\ x_*=x_*(y),
\end{split}
\ee
for some $\tilde R$. Here $e(t):=\exp(i\pi t/2)$, and 
\be\label{M def}\begin{split}
&M(y)=\bma 0 & -\dd_x\Phi(x,\al)\\ (-\dd_x\Phi(x,\al))^T & \dd_x^2\big[H(x,\al)-\nu_1\Phi(x,\al)\big]\ema,\\ 
&\nu_1=\nu_{1*}(y),\ x=x_*(y),
\end{split}
\ee
is the Hessian matrix of the phase at the stationary point $(\nu_{1*}(y),x_*(y))$, which is found by solving
\be\label{stat in x}
p=\Phi(x,\al),\ \dd_x H(x)-\nu_1\dd_x\Phi(x,\al)=0,\ y\in V.
\ee
By construction, $x_*(\tilde y)=\tilde x$. As before, the existence, local uniqueness, and smoothness of the solution follows from Assumptions~\ref{geom GRT}(\ref{li},\ref{visib}) \textcolor{black}{and \ref{ass:f props}\eqref{del curv}}.

\textcolor{black}{Given a nondegenerate symmetric matrix $M$, let $n_+$ and $n_-$ denote the number of positive and negative eigenvalues of $M$, respectively. Then the signature of $M$, denoted $\text{sgn}\,M$, is the difference $n_+-n_-$.} The following result is proven in Appendix~\ref{sec: prf lem matr M}.

\begin{lemma}\label{lem: matr M} Suppose Assumptions~\ref{ass:Phi}--\ref{ass:f props} are satisfied. 
Let $\s_y$ be tangent to $\s$ at $x$. Let $\varkappa_{\s}(x)$ and $\varkappa_{\s_y}(x)$ denote the curvatures of $\s$ and $\s_y$ at $x$, respectively (see \eqref{curv S}). One has
\be\label{det sign}\bs
\det M(y)=&(\varkappa_{\s}(x)-\varkappa_{\s_y}(x)){|\dd_x \Phi(x,\al)|^2}{|\dd_x H(x)|}>0,\\
\text{sgn}\,M(y)=&-1.
\end{split}
\ee
\end{lemma}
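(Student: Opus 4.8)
The plan is to compute $\det M(y)$ directly from the $3\times 3$ block structure in \eqref{M def} and then identify the answer with a curvature difference using the formulas \eqref{curv S}. First I would expand the determinant along the first row. Writing $v:=\Phi_x^\prime(x,\al)\in\br^2$ (a nonzero vector by assumption~\ref{geom GRT}\eqref{li}) and $Q:=H_{xx}^{\prime\prime}(x)-\nu_1\Phi_{xx}^{\prime\prime}(x,\al)$ (a symmetric $2\times2$ matrix), we have $M(y)=\begin{pmatrix} 0 & -v^T\\ -v & Q\end{pmatrix}$, so a cofactor expansion gives $\det M(y)=(v^T\,\text{adj}(Q)\,v)\cdot(\text{sign factor})$; more carefully, for a bordered matrix of this form one has the classical identity $\det\begin{pmatrix}0 & -v^T\\ -v & Q\end{pmatrix}=-\,v^T\,\text{adj}(Q)\,v$, where $\text{adj}(Q)$ is the adjugate (cofactor) matrix of $Q$. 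Since for a $2\times2$ symmetric matrix $Q$ one has $v^T\text{adj}(Q)v=(\det Q)\,(v^T Q^{-1}v)$ when $Q$ is invertible, but more robustly $v^T\text{adj}(Q)v=-Q(w,w)\det\!\begin{pmatrix}v & w\end{pmatrix}^{?}$… — cleaner is to use that $\text{adj}(Q)=(\det Q)Q^{-1}$ is not needed: for $2\times2$, $v^T\text{adj}(Q)v = (Jv)^T Q (Jv)$ where $J$ is rotation by $\pi/2$, because $\text{adj}(Q)=J^T Q J$ for symmetric $2\times 2$ matrices (up to sign, easily checked on a basis). Thus $\det M(y)= -\,(Jv)^T Q\,(Jv)$, i.e. $-Q(e',e')|v|^2$ with $e'=Jv/|v|$ a unit vector perpendicular to $v$, hence tangent to $\s_y$ (and, by tangency, to $\s$) at $x$.

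Next I would evaluate $Q(e',e')=H_{xx}^{\prime\prime}(x)(e',e')-\nu_1\Phi_{xx}^{\prime\prime}(x,\al)(e',e')$. At the point of tangency, $\dd_x H(x)$ and $\dd_x\Phi(x,\al)$ are parallel, so from the stationary-point equation \eqref{stat in x}, $\dd_x H(x)=\nu_1\dd_x\Phi(x,\al)$, giving $\nu_1=|H_x^\prime(x)|/|\Phi_x^\prime(x,\al)|>0$ (positivity uses the normalization $H_x^\prime\cdot\Phi_x^\prime>0$ arranged before assumption~\ref{ass:f props}). Substituting the curvature formulas \eqref{curv S}: $H_{xx}^{\prime\prime}(x)(e',e')=-\varkappa_\s(x)|H_x^\prime(x)|$ and $\Phi_{xx}^{\prime\prime}(x,\al)(e',e')=-\varkappa_{\s_y}(x)|\Phi_x^\prime(x,\al)|$, so $Q(e',e')=-\varkappa_\s(x)|H_x^\prime(x)|+\nu_1\varkappa_{\s_y}(x)|\Phi_x^\prime(x,\al)| = -|H_x^\prime(x)|(\varkappa_\s(x)-\varkappa_{\s_y}(x))$. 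Therefore $\det M(y)=-Q(e',e')|v|^2=(\varkappa_\s(x)-\varkappa_{\s_y}(x))|\Phi_x^\prime(x,\al)|^2|H_x^\prime(x)|$, which is exactly \eqref{det sign}, and it is positive by assumption~\ref{ass:f props}\eqref{del curv}.

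For the signature claim $\text{sgn}\,M(y)=-1$: the $3\times3$ symmetric matrix $M$ has nonzero determinant (just shown), hence three nonzero eigenvalues, and $\det M>0$ forces an even number of negative eigenvalues, so either $0$ or $2$. The leading principal $1\times1$ minor is $0$ and the $2\times2$ leading minor is $\det\begin{pmatrix}0 & -v_1\\ -v_1 & Q_{11}\end{pmatrix}=-v_1^2\le 0$ (and can be made $<0$ after, if needed, relabeling coordinates so that $v_1\ne 0$), which is incompatible with $M$ being positive definite; hence exactly $2$ negative eigenvalues and one positive, so $\text{sgn}\,M(y)=1-2=-1$. I expect the main obstacle to be purely bookkeeping: getting the sign conventions right in the bordered-determinant identity and in the adjugate/rotation identity for $2\times2$ symmetric matrices, and making sure the unit tangent vector $e$ used in \eqref{curv S} is consistently the one perpendicular to both $\Phi_x^\prime$ and $H_x^\prime$ at the tangency point. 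The rest — substituting $\nu_1$ and the curvature formulas — is routine. These computations should be carried out in detail in appendix~\ref{sec: prf lem matr M}.
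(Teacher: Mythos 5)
Your proposal is correct. For the determinant, your route (bordered-determinant identity plus the $2\times2$ adjugate identity $\mathrm{adj}(Q)=J^TQJ$) lands exactly on the paper's intermediate formula $\det M(y)=-\big([H_{xx}^{\prime\prime}-\nu_{1*}\Phi_{xx}^{\prime\prime}]e,e\big)|\Phi_x^\prime|^2$ with $e$ the unit tangent to $\s$ (see \eqref{det M prelim}); the paper labels this an ``easy calculation'' and does not spell it out, so you are merely supplying the omitted details. Your evaluation $\nu_1=|H_x^\prime|/|\Phi_x^\prime|>0$ agrees with \eqref{dHdp} (the paper reaches it by differentiating \eqref{stat in x} in $p$, you take norms in the second equation of \eqref{stat in x} directly — same content), and the substitution of \eqref{curv S} is identical. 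Where you genuinely diverge is the signature claim: the paper conjugates $M$ by $\mathrm{diag}(1,V)$ with $V=\begin{pmatrix}e_0&e\end{pmatrix}$ orthogonal and then invokes Haynsworth's inertia additivity formula to reduce to $\mathrm{sgn}\big([H_{xx}^{\prime\prime}-\nu_{1*}\Phi_{xx}^{\prime\prime}]e,e\big)=-1$, whereas you use the parity argument ($\det M>0$ forces an even number of negative eigenvalues) together with the vanishing $(1,1)$ entry to exclude positive definiteness. Your argument is more elementary and self-contained (indeed the $1\times1$ leading minor being zero already kills positive definiteness, so you do not even need the $2\times2$ minor); the paper's Schur-complement route has the advantage of identifying \emph{which} quadratic form carries the negative inertia, which is thematically consistent with how $\det M$ is computed, but both are complete and correct.
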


It follows from the proof of Lemma~\ref{lem: matr M} (see \eqref{dHdp}) that we can locally solve $H(x_*(\al,p))=0$ for $p$ in terms of $\al$: $p=P(\al)$, where $P(\al)$ is locally smooth. Therefore 
\be\label{Pdef}
H(x_*(y))=\psi(y)(p-P(\al)),\ y\in V,
\ee
where $\psi(y)\not=0$, $y\in V$, and $\psi$ is locally smooth. Moreover, by \eqref{dHdp} and \eqref{Pdef},
\be\label{psi val}\bs
\psi(\al,P(\al))=&\pa_p H(x_*(\al,p))|_{p=P(\al)}=\frac{|\dd_x H(x_*(y))|}{|\dd_x \Phi(x_*(y),\al)|}>0,\\ 
(\al,P(\al))\in & V.
\end{split}
\ee
Therefore, $\psi(y)>0$, $y\in V$.

Instead of finding $P(\al)$ via the intermediary function $x_*(y)$, we can find $P(\al)$ by locally solving 
\be\label{for Q 1}
H(x(\al))=0,\ \dd_x H(x(\al))= \nu_1(\al)\dd_x \Phi(x(\al),\al),
\ee
for $\nu_1(\al)$, $x(\al)$ and setting $P(\al):=\Phi(x(\al),\al)$. Then $\nu_1(\al)\equiv\nu_{1*}(\al,P(\al))$. Since $x(\al)\in\s$, we can say that $P(\al)$ and the segment of $\Gamma$ it defines are associated with a segment of $\s$ (see Figure~\ref{fig:tangency}). 
If we have in mind one specific $\tilde x\in\s$, then we informally say that $P(\al)$ and the corresponding segment of $\Gamma$ are associated with $\tilde x$ (i.e., with a segment of $\s$ containing $\tilde x$).

\begin{figure}[h]
{\centerline{
{\epsfig{file={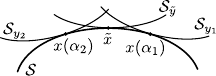}, width=5.5cm}}
}}
\caption{Illustration of $x(\al)$. In the figure, $y_k=(\al_k,P(\al_k))$, $k=1,2$ and $\tilde y=(\tilde\al,P(\tilde\al))$.}
\label{fig:tangency}
\end{figure}

By \eqref{GRT-f-inner}--\eqref{Pdef}, $\hat f$ can be written as
\be\label{GRT-f-alt}
\hat f(y) =\frac1{2\pi}\int_\br \fs(y,\la)e^{i\la (p-P(\al))}\dd\la,\ y\in V,
\ee
and, with some $c_m$, $\tilde R$, 
\be\label{Rf-lim-coefs}\begin{split}
&\fs(y,\la)=\fs_0(y)e(-3/2)(\la-i0)^{-3/2}+\tilde R(y,\la),\ |\la|\ge1;\\ 
&\fs\in S^{-3/2}(V\times\br);\ \tilde R\in S^{-5/2}(V\times\br);\\
&\fs_0(y)=(2\pi)^{1/2}\frac{W(x_*(y),y)\Delta f(x_*(y))\psi(y)^{1/2}}{|\det M(y)|^{1/2}}\in C^\infty(V).
\end{split}
\ee 
The function $\psi(y)$ is absorbed into the frequency variable $\la$ leading to the relation
\be\label{tvG}
\fs(y,\la)=\tilde G(y,\la/\psi(y))/\psi(y),
\ee
where $\tilde G$ is defined in \eqref{GRT-f-inner}. To obtain the formula for $\fs_0$ we use that the leading order term in $\fs$ is homogeneous of degree $-3/2$ and the definition of the distribution $(\la-i0)^a$ \cite[equations (28) and (30), p. 336]{gs}.

By \eqref{GRT-f-alt}, 
\be
\ssp\hat f\cap V\subset\Gamma:=\{(\al,p)\in V:p=P(\al)\}.
\ee
All of the above properties of $\hat f$ follow from the FIO calculus \cite[Section VIII.5]{trev2}, but for our purposes it is more convenient to provide their explicit derivation \eqref{GRT-f-inner}--\eqref{tvG}.

\color{black}
As mentioned above, Assumptions~\ref{ass:Phi}(\ref{Ial},\ref{Phi sm}) and \ref{ass:f props}(\ref{f first}) imply $\tsp \hat f$ is compact. Consider an open cover of $\tsp \hat f$ by sufficiently small open sets $V_l:=I_\al^l\times I_p^l$, $1\le l\le L$, for some  large $L$. Here $I_\al^l$ and $I_p^l$ are intervals. Using a partition of unity subordinate to this cover, we can consider one such set, $V_l$. Generally, the intersection $\Gamma\cap V_l$ is a single smooth curve segment.
If $\Gamma$ self-intersects in $V_l$, we formally duplicate $V_l$ as many times as needed to make sure each intersection $\Gamma\cap V_l$ is identified with no more than one smooth curve segment, denoted $\Gamma_l$. 

Let $p=P_l(\al)$, $\al\in I_\al^l$, be the local parameterization for $\Gamma_l$. Unless stated otherwise, we consider only one set from the partition, denoted $V=I_\al\times I_p$. The corresponding $P_l(\al)$ is denoted $P(\al)$. Also, we denote $J_\al:=\{j_1\in\mathbb Z:\al_{j_1}\in I_\al\}$, and similarly for $J_p$. There are $O(1/\e)$ points in $J_\al$ and $J_p$.
\color{black}

The following result is proven in Appendix~\ref{sec:prf conorm}.

\begin{lemma}\label{lem:conorm} Suppose Assumptions~\ref{ass:Phi}--
\ref{ass:interp ker} are satisfied. Pick any $\chi_V\in\coi(V)$. Let $g:=\chi_V\hat f$. Then
$g,g_\e\in C_0^{1/2}(V)$ and $\Vert g_\e\Vert_{C^{1/2}(V)}<c$, $0<\e\ll1$. Also,
\be\label{g-lead-sing}\bs
&g(y)=a_0(y)(p-P(\al))_+^{1/2}+\Delta g(y),\\
&a(y)=2\pi^{-1/2}\fs_0(y),\  a\in\coi(V),\ \Delta g\in C_0^{3/2}(\vs),
\end{split}
\ee
and $\Vert \Delta g_\e\Vert_{C^{3/2}(V)}<c$ for all $0<\e\ll1$, \textcolor{black}{where $\fs_0(y)$ is given in \eqref{Rf-lim-coefs}. Here
\be
g_\e(y):=\tsum_{j\in V} \ik_\e(y-y_j)g(y_j),
\ee
and $\Delta g_\e$ is defined analogously, see \eqref{interp-data}.} 
\end{lemma}

\textcolor{black}{If $\Gamma\cap V=\varnothing$, we assume in \eqref{g-lead-sing} that $a_0\equiv0$.}

\section{Regularized solution}\label{sec:reg_sol}

\subsection{Equation for the solution to \eqref{min pb}, $f_\e$}
Let $\R^*$ be the adjoint of $\R:L^2(\us)\to L^2(\vs)$:
\be\label{Radj def}\bs
(\R^* g)(x)=& \int_{\vs} g(y)W(x,y)\de(p-\Phi(x,\al)) \dd y\\
=&\int_{\I_\al} g(\al,\Phi(x,\al))W(x,(\al,\Phi(x,\al)))\dd \al,\ x\in\us.
\end{split}
\ee
As is well-known, Assumption~\ref{geom GRT} ensures that $\R^*\R$ is an elliptic pseudo-differential operator $(\Psi$DO) \cite{qu-80, Holm24}, \cite[Section VIII.6.2]{trev2} of order -1 in $\us$, i.e. $\R^*\R\in L^{-1}(\us)$. 

Clearly, the functional $\Psi$ in \eqref{min pb} is Fr\'echet-differentiable. Indeed, one has 
\be\bs
&\Psi(f+h)-\Psi(f)=\Psi^\prime(f;h)+O(\Vert h\Vert_{H^1(\us)}^2),\ f,h\in H_0^1(\us_b),\\
&(1/2)\Psi^\prime(f;h)=(\R f-\hat f_\e,\R h)_{L^2(\vs)}+\kappa\e^3(\pa_x f,\pa_x h)_{L^2(\us)},
\end{split}
\ee
and the functional $H_0^1(\us_b)\ni h\to \Psi^\prime(f;h)$ is linear and continuous \cite[p. 14]{Peyp2015}. Recall that $\hat f_\e$ is defined in \eqref{interp-data}. Then the subgradient of $\Psi$ coincides with the gradient \cite[Proposition 3.20]{Peyp2015}, and $f$ is the minimizer if and only if the gradient equals zero \cite[Theorem 3.24]{Peyp2015}. The fact that $\Psi$ is proper and convex, which is required for the latter conclusion, is established in the proof of Lemma~\ref{lem:unique sol} in Appendix~\ref{sec: prf lem unique sol}. Therefore the solution to \eqref{min pb} is the unique solution to the following equation, which is equivalent to the first order optimality condition: 
\be\label{reg main eq}
(\R^*\R-\kappa\e^3\Delta)f_\e=\R^*\hat f_\e,\ x\in\us_b,\ f_\e\in H_0^1(\us_b).
\ee
This follows by setting $\Psi^\prime(f_\e;h)\equiv0$, $h\in \coi(\us_b)$, integrating by parts, and using that $\coi(\us_b)$ is dense in $H_0^1(\us_b)$.

\begin{lemma}\label{lem:bdd sol} Suppose Assumptions~\ref{ass:Phi}--
\ref{ass:interp ker} are satisfied, and $f_\e$ is obtained by solving \eqref{min pb} or, equivalently, \eqref{reg main eq}. One has 
\be
\Vert f_\e\Vert_{H^{-1/2}(\us)}\le c,\ 0<\e\ll1.
\ee
\end{lemma}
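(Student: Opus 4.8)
\textbf{Proof plan for Lemma~\ref{lem:bdd sol}.}

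The plan is to test the optimality equation \eqref{reg main eq} against $f_\e$ itself and combine the resulting energy identity with the stability estimate in Assumption~\ref{geom GRT}\eqref{invert}. Concretely, since $f_\e\in H_c^1(\us_b)$ solves \eqref{reg main eq}, pairing both sides with $f_\e$ and integrating by parts in the Laplacian term gives
\be\label{energy id plan}
\Vert \R f_\e\Vert_{L^2(\vs)}^2+\kappa\e^3\Vert\nabla f_\e\Vert_{L^2(\us)}^2=(\R^*g_\e,f_\e)_{L^2(\us)}=(g_\e,\R f_\e)_{L^2(\vs)}\le \Vert g_\e\Vert_{L^2(\vs)}\Vert \R f_\e\Vert_{L^2(\vs)}.
\ee
Dropping the (nonnegative) regularization term and dividing, this yields $\Vert \R f_\e\Vert_{L^2(\vs)}\le \Vert g_\e\Vert_{L^2(\vs)}$. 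By Lemma~\ref{lem:conorm} (or the remark preceding \eqref{min pb} that invokes it) we have $\Vert g_\e\Vert_{L^2(\vs)}\le c$ uniformly for $0<\e\ll1$, hence $\Vert\R f_\e\Vert_{L^2(\vs)}\le c$. Since $f_\e\in H_c^{-1/2}(\us_b)\supset H_c^1(\us_b)$, Assumption~\ref{geom GRT}\eqref{invert} applies and gives $\Vert f_\e\Vert_{H^{-1/2}(\us)}\le c\Vert\R f_\e\Vert_{L^2(\vs)}\le c$, which is the claim.

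The main thing to be careful about is the justification of the pairing in \eqref{energy id plan}: a priori \eqref{reg main eq} holds as an equation in a negative-order Sobolev space, so one should check that $(\R^*\R-\kappa\e^3\Delta)f_\e$ pairs with $f_\e\in H_c^1(\us_b)$ in a legitimate way. This is harmless: $\R f_\e\in L^2(\vs)$ and $\nabla f_\e\in L^2(\us)$ since $f_\e\in H^1$, so each term on the left is a well-defined $L^2$ inner product, and the integration by parts is valid because $f_\e$ has compact support in $\us_b$ — equivalently, one can read the identity directly off the first-order optimality condition $\Psi^\prime(f_\e;f_\e)=0$, which is \eqref{energy id plan} verbatim (the Fréchet differentiability and the vanishing of the gradient at the minimizer were established above). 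No smallness of $\e$ is needed in this step; the only place $0<\e\ll1$ enters is through the uniform bound on $\Vert g_\e\Vert_{L^2(\vs)}$ coming from Lemma~\ref{lem:conorm}. I do not anticipate a real obstacle here — the lemma is essentially the standard a priori estimate for a Tikhonov-regularized least-squares problem, and the only nonstandard ingredient, the quantitative injectivity of $\R$ on $H^{-1/2}$, is supplied by hypothesis.
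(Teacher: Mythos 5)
Your proof is correct and follows essentially the same route as the paper: extract a uniform bound $\Vert\R f_\e\Vert_{L^2(\vs)}\le c\Vert g_\e\Vert_{L^2(\vs)}$ from minimality, then apply the stability estimate \eqref{inverse bound}. The only difference is cosmetic — the paper skips the Euler–Lagrange identity entirely and just compares $\Psi(f_\e)\le\Psi(0)=\Vert g_\e\Vert_{L^2(\vs)}^2$, which gives $\Vert\R f_\e\Vert_{L^2(\vs)}\le 2\Vert g_\e\Vert_{L^2(\vs)}$ without any need to justify the pairing you worried about.
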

\begin{proof} 
Since $\Psi(f\equiv0)=\Vert \hat f_\e \Vert_{L^2(\vs)}^2$ (cf. \eqref{min pb}), the solution $f_\e$ satisfies 
$\Vert \R f_\e\Vert_{L^2(\vs)}\le 2\Vert \hat f_\e\Vert_{L^2(\vs)}<c$, $0<\e\ll1$ (cf. the paragraph preceding \eqref{min pb}). 
For clarity, $\R f_\e$ is the GRT of the solution $f_\e$, while $\hat f_\e$ is the interpolated version of the exact discrete data, i.e. $\hat f_\e=(\R f)_\e$. By construction, $f_\e\in H_0^1(\us_b)\subset H_0^{-1/2}(\us_b)$, where $\us_b$ is bounded. Application of \eqref{inverse bound} completes the proof.
\end{proof}

Let $\tilde D$ be the complete symbol of $\R^*\R$, i.e. $\R^*\R=\op(\tilde D(x,\xi))$ (see \cite[Theorem 3.4]{Grig1994}). Rewrite \eqref{reg main eq} in an alternative form:
\be\bs\label{to scl PDO 1}
\big(\op(\tilde D(x,\xi)+\kappa\e^3|\xi|^2)f_\e\big)(x)
=(\R^* \hat f_\e)(x),\ x\in\us_b.
\end{split}
\ee
Let $\us_0$ be a sufficiently small neighborhood of $x_0\in\us_b$. Fix any $\chi\in\coi(\us_b)$ such that $\chi(x)\equiv1$ in a neighborhood of the closure of $\us_0$ (see Figure~\ref{fig:Us} in Appendix~\ref{sec:prf lem femge}). Apply $\op(\chi(x)|\xi|)\chi(x)$ on both sides of \eqref{to scl PDO 1}:
\be\label{eqn checks oper 2}\bs
&\op(\chi(x)|\xi|)\chi(x)\op\big(\tilde D(x,\xi)+\kappa\e^3|\xi|^2\big)f_\e
=\op(\chi(x)|\xi|)\chi(x)\R^* \hat f_\e=:F_\e.
\end{split}
\ee

Let $\tilde D_0(x,\xi):=\lim_{\la\to\infty}|\la\xi|\tilde D(x,\la\xi)$. The limit exists and $\tilde D_0$ is  homogeneous of degree 0 in $\xi$ (see \eqref{R*R D0} below). The following lemma is proven in Appendix~\ref{sec:prf lem femge}.
\begin{lemma}\label{lem:femge} Suppose Assumptions~\ref{ass:Phi}--\ref{ass:interp ker} are satisfied. Define 
\be\label{lead sing fe}
\G_\e:=\op\big(\big[\tilde D_0(x,\xi)+\kappa|\e\xi|^3\big]^{-1}\big) F_\e.
\ee
Fix any $A>0$. One has:
\be\label{fmG}\bs
&|(f_\e-\G_\e)(x)|\le c,\ x\in \us_0,\ 0<\e\ll 1,\\
&|(f_\e-\G_\e)(x_0+\e\chx)-(f_\e-\G_\e)(x_0)|\le c\e^{1/2},\ 0<\e\ll 1,\ |\chx|<A.
\end{split}
\ee
\end{lemma}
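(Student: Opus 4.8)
The plan is to compare $f_\e$ and $G_\e$ by analyzing the operator equation that connects them. Starting from \eqref{eqn checks oper 2}, I would introduce the operator $\CA_\e:=\op(\chi(x)|\xi|)\chi(x)\op(\tilde D(x,\xi)+\kappa\e^3|\xi|^2)$ and compare it to $\op(\tilde D_0(x,\xi)+\kappa|\e\xi|^3)$ on functions supported near $x_0$. Since $\chi\equiv1$ near $\us_0$, on $\us_0$ the leading symbol of $\CA_\e$ is $|\xi|\cdot(\tilde D(x,\xi)+\kappa\e^3|\xi|^2)$, whose top-order part (homogeneous degree $3$ in the high-frequency regime) is $|\xi|\tilde D(x,\xi)+\kappa\e^3|\xi|^3 \sim \tilde D_0(x,\xi)|\xi| + \dots$; wait — more carefully, $\op(\chi|\xi|)\chi\,\op(\tilde D)$ has principal symbol $\chi^2|\xi|\tilde D(x,\xi)$, which for large $|\xi|$ behaves like $|\xi|\cdot|\xi|\tilde D_0(x,\xi)/|\xi| = |\xi|\tilde D_0(x,\xi)$... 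I need to keep careful track of the homogeneity: $\tilde D\in S^1$, so $|\xi|\tilde D_0(x,\xi)$ is the degree-$1$ principal symbol of $\R^*\R$ rescaled, and the extra $\kappa|\e\xi|^3$ comes from the Laplacian term $\kappa\e^3|\xi|^2$ multiplied by the extra $|\xi|$. So the symbol of $G_\e^{-1}$-operator, $\tilde D_0(x,\xi)+\kappa|\e\xi|^3$, is precisely designed to capture the top-order behavior of $\CA_\e$ uniformly in $\e$. The difference $\CA_\e - \op(\tilde D_0+\kappa|\e\xi|^3)$ should then be an operator of one order lower, uniformly in $\e$, i.e. it maps with a gain of one derivative; composing with the (uniformly bounded family of) parametrices $\op([\tilde D_0+\kappa|\e\xi|^3]^{-1})$ one gets that $f_\e - G_\e = \op(R_\e)F_\e + (\text{smoothing})F_\e$ where $R_\e\in S^{-1}$ uniformly.

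The key steps, in order, would be: (i) establish that $\{\op([\tilde D_0(x,\xi)+\kappa|\e\xi|^3]^{-1})\}_{0<\e\ll1}$ is a family of $\Psi$DOs with symbols in $S^{-1}(\us_0)$ bounded uniformly in $\e$ — this needs the ellipticity of $\R^*\R$ (so $\tilde D_0>0$, hence the denominator is bounded below by $c>0$) together with the fact that the $\kappa|\e\xi|^3$ term only helps for $|\xi|\gtrsim1/\e$ and is harmless for symbol estimates since each $\xi$-derivative of $|\e\xi|^3$ gains a power of $|\xi|$ while the growing denominator absorbs it; (ii) write $f_\e = \op([\tilde D_0+\kappa|\e\xi|^3]^{-1})\CA_\e f_\e + (\text{error})$ by the parametrix construction, so that $f_\e - G_\e = \op([\tilde D_0+\kappa|\e\xi|^3]^{-1})(\CA_\e - \op(\tilde D_0+\kappa|\e\xi|^3))f_\e + (\text{lower order})$, with the commutator/remainder operator uniformly of order $-1$ applied to $f_\e$; (iii) invoke Lemma~\ref{lem:bdd sol} to get $\Vert f_\e\Vert_{H^{-1/2}(\us)}\le c$, so that applying a uniformly order-$(-1)$ operator lands $f_\e - G_\e$ in $H^{1/2}_{loc}$, which embeds into $L^\infty$ in $\br^2$ — giving the first bound in \eqref{fmG}; (iv) for the second, finer bound, observe that $f_\e - G_\e$ is actually uniformly bounded in $C^{1/2}(\us_0)$ (same argument: $H^{1/2+1}=H^{3/2}\hookrightarrow C^{1/2}$ in $\br^2$), so $|(f_\e-G_\e)(x_0+\e\chx)-(f_\e-G_\e)(x_0)|\le \Vert f_\e-G_\e\Vert_{C^{1/2}}|\e\chx|^{1/2}\le c(A\e)^{1/2}=c\e^{1/2}$ for $|\chx|<A$.

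There is a subtlety about the cutoffs and the passage from the equation \eqref{eqn checks oper 2} (which holds on $\us_b$) to estimates on $\us_0$: the operators $\op(\chi(x)|\xi|)$ etc. are not properly supported in an $\e$-uniform way by themselves, so I would localize further with an auxiliary $\chi_0\in\coi$ equal to $1$ on a neighborhood of $\overline{\us_0}$ and supported where $\chi\equiv1$, and absorb all the off-diagonal (smoothing) contributions — which are $O(\Vert f_\e\Vert_{H^{-N}})$ for any $N$, hence uniformly bounded by Lemma~\ref{lem:bdd sol} — into the error terms. The microlocal bookkeeping here is standard but must be done with attention to $\e$-uniformity of every constant, since $\e$ appears both as a small parameter in $f_\e$ and inside the symbol $\kappa|\e\xi|^3$.

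The main obstacle I anticipate is item (i): proving the $\e$-uniform symbol estimates for $\op([\tilde D_0(x,\xi)+\kappa|\e\xi|^3]^{-1})$ and, more importantly, controlling the full asymptotic expansion of the composition $\op([\tilde D_0+\kappa|\e\xi|^3]^{-1})\circ\CA_\e$ uniformly in $\e$ — one must verify that every term in the symbol calculus expansion (each involving $\pa_\xi$ of the inverse symbol paired with $\pa_x$ of $\tilde D$) is bounded in the appropriate $S^{-k}$ norm uniformly in $\e\in(0,1]$, rather than just for each fixed $\e$. This is where the specific form $\kappa\e^3$ of the regularization weight (rather than, say, $\e^2$) is being used: it is exactly the power that makes $\kappa\e^3|\xi|^2\cdot|\xi|=\kappa|\e\xi|^3$ homogeneous of the same degree $3$ as $|\xi|\tilde D_0(x,\xi)\cdot|\xi|/|\xi|$... so that the $\e$-dependence enters only through the dimensionless combination $\e\xi$, making all symbol seminorms $\e$-independent. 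Once (i) is in hand, steps (ii)–(iv) are routine applications of elliptic parametrix construction, Lemma~\ref{lem:bdd sol}, and Sobolev embedding.
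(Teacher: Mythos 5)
Your overall architecture (a parametrix for the operator $\T_\e$ in \eqref{eqn checks oper 2}, the identification of $\big(\tilde D_0(x,\xi)+\kappa|\e\xi|^3\big)^{-1}$ as the leading part of the parametrix symbol, and the insistence on $\e$-uniform seminorm bounds because the $\e$-dependence enters only through $\e\xi$) matches the first half of the paper's proof. The genuine gap is in how you dispose of the remainder. You propose to write $f_\e-G_\e$ as a uniformly order-$(-1)$ operator applied to $f_\e$ and then invoke Lemma~\ref{lem:bdd sol}: but $\Vert f_\e\Vert_{H^{-1/2}}\le c$ plus a gain of one derivative only lands you in $H^{1/2}_{loc}$, and in $\br^2$ the embedding $H^{1/2}\hookrightarrow L^\infty$ is false (you need $s>1$), so even the first bound in \eqref{fmG} does not follow. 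Your step (iv) is worse: the claimed ``$H^{1/2+1}=H^{3/2}\hookrightarrow C^{1/2}$'' presupposes an extra full derivative of gain that nothing in your construction provides — the symbol discrepancy $\Delta\tilde B_\e=\tilde B_\e-(\tilde D_0+\kappa|\e\xi|^3)^{-1}$ is only in $S^{-1}$ uniformly, not $S^{-2}$. So the Sobolev-embedding route collapses precisely at the two pointwise/H\"older estimates the lemma asserts.

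The paper's escape is to stop measuring the error against $f_\e$ in the Sobolev scale and instead push it back onto the data. It splits the error into (a) a genuinely smoothing piece $\CH_\e f_\e$ coming from the parametrix, whose kernel has $\e$-uniformly bounded derivatives, so Lemma~\ref{lem:bdd sol} does suffice there to get a uniform $C^1(\us_0)$ bound; and (b) the symbol-level remainder, which is rewritten as $\op(\Delta\tilde B_\e)F_\e=\CW_\e g_\e$ with $\CW_\e=\op(\Delta\tilde B_\e)\op(\chi|\xi|)\chi\R^*$ an FIO of order $-1/2$ (uniformly in $\e$). The decisive ingredients, which your proposal never uses, are Lemma~\ref{lem:conorm} ($g_\e\in C_0^{1/2}(\vs)$ with $\e$-uniform norm) and the continuity of FIOs on H\"older--Zygmund spaces (\cite[Proposition 5.5]{IRS2021}, giving $C^{1/2}\to C^{1/2}$ for an order $-1/2$ FIO in $n=2$). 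That yields a uniform $C^{1/2}$ bound on $\CW_\e g_\e$, which simultaneously delivers the $L^\infty$ bound and the $O(\e^{1/2})$ increment bound over distances $\e|\chx|\le A\e$. Without some substitute for this H\"older-regularity input on the data side, the $\e^{1/2}$ estimate in \eqref{fmG} is out of reach of the $H^{-1/2}$ information on $f_\e$ alone, so the proposal as written cannot be completed.
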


\section{Proving the boundedness of $F_\e$}\label{sec:RHS bdd}

In this section we prove the following key result, which is used in the proof of Lemma~\ref{lem:femge}.

\begin{lemma}\label{lem:F bnd final} Suppose Assumptions~\ref{ass:Phi}--\ref{ass:interp ker} are satisfied. Then $\Vert F_\e\Vert_{L^\infty(\us)}\le c$ for all $0<\e\ll1$.
\end{lemma}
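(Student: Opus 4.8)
Recall that $F_\e=\op(\chi(x)|\xi|)\chi(x)\R^*g_\e$, where $g_\e$ is the interpolation of the GRT data. The strategy is to split $g_\e$ according to the decomposition of $g=\R f$ in Lemma~\ref{lem:conorm}: the smooth remainder $\Delta g$ contributes a uniformly $C^{3/2}$-bounded piece (which survives an order-one $\Psi$DO and a subsequent multiplication, landing in $C^{1/2}$, hence $L^\infty$), plus the interpolation error, which I treat using Assumption~\ref{ass:interp ker}(3) (exactness up to order one) together with $\ik_*\in C_0^2$; and the singular part $\tsum_l a_l(y)(p-P_l(\al))_+^{1/2}$, which carries the genuine difficulty. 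So the main work is to bound $\op(\chi|\xi|)\chi\,\R^*\big(\ik_\e * \text{(sampled singular part)}\big)$ in $L^\infty(\us)$ uniformly in $\e$. By a partition of unity and the linearity of $\R^*$ it suffices to treat one term $a(y)(p-P(\al))_+^{1/2}$ supported near a single $y_l\in\Gamma$.

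First I would reduce to a model computation. The operator $\op(\chi|\xi|)\chi\,\R^*$ is an FIO of order $1/2-1/2=0$ (composition of a first-order $\Psi$DO with the order-$(-1/2)$ adjoint $\R^*$), associated with the inverse canonical relation $\mathcal C^{-1}$, which by the Bolker condition (Assumption~\ref{geom GRT}\eqref{bolk}) and Assumption~\ref{geom GRT}\eqref{li} is the graph of a local canonical diffeomorphism. Pulling the $(p-P(\al))_+^{1/2}$ singularity back through this diffeomorphism produces a conormal distribution on $\us$ of the same order, i.e. of the form $b(x)(H(x))_+^{1/2}+(\text{lower order})$ with $b$ smooth and compactly supported, plus a $C^{1/2}$-bounded error — this is exactly the continuous-data analogue of \eqref{g-lead-sing} transported to the image side. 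The point is that $(H(x))_+^{1/2}\in C^{1/2}\subset L^\infty$ with norm controlled by $\Vert a\Vert$ and the geometry, uniformly. So for \emph{continuous} data the bound is immediate. The real issue is that we have interpolated \emph{sampled} data, so I must compare $\ik_\e*(\text{sampled } a(y)(p-P(\al))_+^{1/2})$ with the true $a(y)(p-P(\al))_+^{1/2}$, and control the FIO applied to the difference.

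The key estimate is therefore: $\Vert \op(\chi|\xi|)\chi\,\R^*\big(g_\e^{\text{sing}}-g^{\text{sing}}\big)\Vert_{L^\infty(\us)}\le c$ uniformly in $\e$, where $g^{\text{sing}}=a(y)(p-P(\al))_+^{1/2}$ and $g_\e^{\text{sing}}$ its interpolation. Here I would work on the Fourier side. The interpolation error $g_\e^{\text{sing}}-g^{\text{sing}}$ has Fourier transform that, by Poisson summation and Assumption~\ref{ass:interp ker}(3), consists of (i) a main term $(\hat\ik_\e(\xi)-1)\widetilde{g^{\text{sing}}}(\xi)$, where $\hat\ik_\e(\xi)-1=O(|\e\xi|^2)$ for $|\e\xi|\le 1$ because $\ik_*$ reproduces polynomials of degree $1$ and is even, and (ii) aliasing terms $\hat\ik_\e(\xi)\widetilde{g^{\text{sing}}}(\xi - 2\pi k/\Delta)$, $k\ne 0$, which are supported in $|\xi|\gtrsim 1/\e$. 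Since $\widetilde{g^{\text{sing}}}(\xi)$ decays like $|\xi|^{-3/2}$ along the conormal direction of $\Gamma$ and is Schwartz in the transverse direction, and the FIO composed with $\op(\chi|\xi|)$ raises the order by $1$ (i.e. multiplies the symbol decay by $|\xi|$), the symbol-times-data product decays like $|\xi|^{-1/2}$ conormally. For the main term (i) the extra gain $|\e\xi|^2$ on $|\e\xi|\le 1$ makes the integral $\int |\e\xi|^2 |\xi|^{-1/2}\,(\text{transverse Schwartz})\,d\xi$ converge with an $\e$-independent bound after a partition in $\xi$ into $|\xi|\le 1/\e$ and $|\xi|\ge 1/\e$; for the aliasing terms (ii) the restriction to $|\xi|\gtrsim 1/\e$ together with the conormal decay $|\xi|^{-1/2}$ and the fact that each shifted copy is separated gives a convergent sum bounded uniformly in $\e$. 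Transporting these $L^2$- or $L^1$-in-$\xi$ bounds on the symbol-weighted Fourier transform into an $L^\infty$ bound on the function is done by the standard device: write the output as an oscillatory integral with amplitude in $S^0$, apply nonstationary phase / integration by parts in the transverse variables to gain arbitrary decay there, and the conormal integral $\int |\tau|^{-1/2}\,d\tau$ over a bounded range (it is localized by $\chi$) converges.

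\textbf{Main obstacle.} The crux is the aliasing/interpolation analysis in the previous paragraph: one must show that both the smoothing factor $\hat\ik_\e(\xi)-1$ and the aliasing copies, \emph{after} being amplified by the order-one factor $|\xi|$ coming from $\op(\chi|\xi|)$, still produce an $\e$-uniformly bounded function in $L^\infty$ — the order-one amplification is exactly borderline against the $|\xi|^{-3/2}$ conormal decay of the $(\cdot)_+^{1/2}$ singularity, leaving only $|\xi|^{-1/2}$, which is \emph{not} integrable at infinity, so one genuinely needs the $|\e\xi|^2$ gain (for the low-frequency part) and the frequency-support separation (for the aliasing part), and one needs the transverse Schwartz decay to make the two-dimensional integral converge. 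Keeping track of uniformity in $\e$ through the FIO conjugation — i.e. verifying that the canonical transformation and the amplitude estimates do not implicitly depend on $\e$ — is the delicate bookkeeping. I expect this is where most of the technical work (and likely an appeal to the technical lemmas deferred to the appendices) will go.
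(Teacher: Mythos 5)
Your reduction of the smooth part ($\Delta g_\e$ and lower-order pieces of the operator) is in the spirit of the paper, but the core of your argument --- bounding the operator applied to the interpolation/aliasing error of the singular part by absolute-value estimates on the Fourier side --- has a genuine gap, and I do not believe it can be closed in the form you state. First, the bookkeeping is off: $\op(\chi|\xi|)\chi\R^*$ has order $1+(-1/2)=1/2$, not $0$, and the continuous-data image of $a(y)(p-P(\al))_+^{1/2}$ is a bounded jump across $\s$ (conormal symbol decay $|\xi|^{-1}$), not $b(x)H(x)_+^{1/2}\in C^{1/2}$. More seriously, the claimed convergence of the high-frequency/aliasing estimates fails if you take absolute values under the $\xi$-integral. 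Each aliased copy $\hat\ik_\e(\xi)\,\widetilde{g^{\text{sing}}}(\xi-2\pi k/\Delta)$ has $O(1)$ Fourier mass near its shifted center $|\xi|\sim k/\e$, where the operator amplifies by a positive power of $|\xi|$; the only damping available there is the second-order Strang--Fix zero of $\hat\ik$ at the nonzero lattice points (which your sketch never invokes for the aliasing terms) together with the $O(|\zeta|^{-2})$ decay of $\hat\ik$ coming from $\ik\in C_0^2$. Even granting both, a magnitude-only count gives each copy a contribution of order $1/k$ and the sum over $k\lesssim 1/\e$ diverges like $\ln(1/\e)$; with your own exponents (amplification $|\xi|$, residual decay $|\xi|^{-1/2}$) the loss is polynomial in $1/\e$. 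So uniform boundedness of $F_\e$ cannot be obtained from size estimates alone: it rests on cancellation, and on a geometric input your plan never uses.

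That input is the quadratic tangency between $\s_y$ and $\s$. In the paper the leading term $F_\e^{(l)}$ is written as an explicit lattice sum, $F_\e^{(l)}=-\pi^{-1}\e^{1/2}\sum_{j_1}G(Q(\al_{j_1})/\e,\dots)$ with $Q(x,\al)=\Phi(x,\al)-P(\al)$, and boundedness comes from two facts working together: the decay of $G$ ($O(\e^{1/2})+O(\cht^{-1})$ as $\cht\to+\infty$, $O((-\cht)^{-1/2})$ as $\cht\to-\infty$, Lemma~\ref{lem:G props}, whose proof already needs the cancellation identity $\int_0^\infty \dd s/((s-t)s^{1/2})=0$), and the strict positivity $Q_{\al\al}^{\prime\prime}>0$ at the tangency point (Lemma~\ref{lem:Q 2nd der}, i.e. the curvature condition in Assumption~\ref{ass:f props}). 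The resulting sums, e.g. $\e^{1/2}\sum_{j_1}\e/(\e+(\e j_1)^2)=O(1)$ in \eqref{Jpl est v1} and the $h$-uniform bound \eqref{Jmn est v1}, are exactly borderline: any loss of a half power, or a degenerate (higher-order) tangency, would break them. Your Fourier-side plan would have to reproduce this borderline cancellation (effectively a stationary-phase argument in the angular variable quantifying the curvature difference, combined with the Strang--Fix zeros and the two-dimensional sampling lattice in $(\al,p)$), and as written it does not; this is the missing idea rather than mere deferred bookkeeping.
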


\color{black}
By the linearity of \eqref{eqn checks oper 2}, to prove the lemma it suffices to consider $F_\e$ as defined on the right in \eqref{eqn checks oper 2}, but with $\hat f_\e$ replaced by $g_\e$ using a local piece of the data $g=\chi_V\R f$ introduced in Lemma~\ref{lem:conorm}.

\subsection{Preliminary calculations}\label{ssec:prelims}
Our goal is to show that the right-hand side of \eqref{eqn checks oper 2} is bounded.
Rewrite \eqref{eqn checks oper 2} as follows. 
\be\label{get lead term}\bs
F_\e(x)&=(\T g_\e)(x),\ \T:=\op(\chi(x)|\xi|)\chi(z)\R^*,\\
(\T g_\e)(x)&=\frac{\chi(x)}{(2\pi)^3}\int_{\br^2}\int_{\us}|\xi|e^{i\xi\cdot(z-x)}\chi(z)\\
&\hspace{1cm}\times\frac1{2\pi}\int_{\br}\int_{V} W(z,y)
e^{i\nu(p-\Phi(z,\al))}g_\e(y)\dd y\dd\nu\dd z\dd\xi.
\end{split}
\ee
Next, consider the integral
\be\label{symb all}\bs
K_1(x,\al,\nu):=\frac{\chi(x)}{(2\pi)^3}\int_{\br^2}\int_{\us}\chi(z)W(z,y)|\xi|e^{i\xi\cdot(z-x)-i\nu(\Phi(z,\al)-\Phi(x,\al))}\dd z\dd\xi.
\end{split}
\ee
Upon changing variables $\xi\to\xi_1=\xi/\nu$, the stationary point of the phase satisfies $z=x$, $\xi_1=\dd_x\Phi(x,\al)$, and the stationary point is nondegenerate. By the stationary phase method, 
\be\label{K_1 K}\bs
&K_1(x,\al,\nu)=|\nu|K(x,\al)+\Delta K(x,\al;\nu),\\ 
&K(x,\al):=\chi^2(x)|\dd_x \Phi(x,\al)|W(x,(\al,\Phi(x,\al))),\ x\in \us,\\
&K\in \coi(\us_b\times\I_\al),\ \Delta K\in \coi(\us_b\times\I_\al\times(\br\setminus 0)),\ \Delta K \in S^0(\br).
\end{split}
\ee
By \eqref{get lead term}--\eqref{K_1 K},
\be\label{get lead term v2}\bs
(\T g_\e)(x)=&\int_{I_\al} K(x,\al) (\Lambda g_\e)(\al,\Phi(x,\al))\dd\al\\
&+\int_{I_\al} (\Delta\mathcal K_{x,\al} g_\e)(\al,\Phi(x,\al))\dd\al,\ x\in\us.
\end{split}
\ee
Here $\Lambda\in L^1(\vs)$, and $\mathcal K_{x,\al}\in L^1(\vs)$ is a $\Psi$DO, which depends smoothly on $(x,\al)\in\us_b\times\I_\al$, 
\be\label{two PDOs}\bs
(\Lambda g)(\al,t)=&\frac1{2\pi}\int_{\I_p}|\nu|e^{i\nu(p-t)}g(\al,p)\dd p\dd\nu,\\
(\mathcal K_{x,\al^\prime}g)(\al,t)=&\frac1{2\pi}\int_{\I_p}\Delta K(x,\al^\prime;\nu)e^{i\nu(p-t)}g(\al,p)\dd p\dd\nu,\ (\al,t)\in \vs.
\end{split}
\ee

By Lemma~\ref{lem:conorm}, 
\be\bs
&g_\e\in C_0^{1/2}(V),\ \Delta g_\e\in C_0^{3/2}(V),\\ 
&\Vert g_\e\Vert_{C^{1/2}(V)}<c,\ \Vert \Delta g_\e\Vert_{C^{3/2}(V)}<c,\ 0<\e\ll1. 
\end{split}
\ee
Recall that $B_{\infty,\infty}^s(\br^n)=C^s(\br^n)$, $s>0$, $s\not\in\N$ \cite[Remark 6.4(2)]{Abels12}. 
By the continuity of $\Psi$DOs in H{\"o}lder-Zygmund spaces (see \cite[Theorem 6.19]{Abels12}), $\Vert \Lambda \Delta g_\e\Vert_{C^{1/2}(V_1)}<c$, $\Vert \Delta \mathcal K_{x,\al^\prime} g_\e\Vert_{C^{1/2}(V_1)}<c$, $0<\e\ll1$, for any compact $V_1\subset\vs$. It is now obvious that
\be\label{get lead term v3}\bs
\int_{I_\al} (\Lambda \Delta g_\e)(\al,\Phi(x,\al))\dd\al
+\int_{I_\al} (\Delta\mathcal K_{x,\al} g_\e)(\al,\Phi(x,\al))\dd\al\in C_0^{1/2}(\us),
\end{split}
\ee
with its $C^{1/2}(\us)$ norm uniformly bounded for $0<\e\ll1$.

\color{black}
Therefore, all that remains is to bound the following expression:
\be\label{ker B}\bs
F_\e^{(l)}(x):=&-\frac1\pi\int_{V}\frac{\chii(p-\Phi(x,\al)) K(x,\al)}{p-\Phi(x,\al)}\\
&\times\sum_{y_j\in V} \pa_p \ik_\e(y-y_j)a_0(y_j)(p_{j_2}-P(\al_{j_1}))_+^{1/2}\dd y,\ x\in\us_b.
\end{split}
\ee
Recall that $K(x,\al)\equiv0$ for any $x\in\us\setminus\us_b$, cf. \eqref{K_1 K}. To get \eqref{ker B} from \eqref{get lead term v2}, \eqref{two PDOs}, we used the identity $\op(|\nu|)=-1/(\pi t^2)$, integrated by parts with respect to $p$, and inserted a 1D cutoff function $\chii$. The latter identically equals one in a neighborhood of zero. Also, we omitted smooth terms with uniformly bounded $\Vert\cdot\Vert_{C^1(\us)}$ norm, $0<\e\ll1$, which arise due to the cutoff. 

Summarizing the above results yields
\be\label{Fe interm}\bs
&\Vert F_\e\Vert_{L^\infty(\us_b)}\le \Vert F_\e^{(l)}\Vert_{L^\infty(\us_b)}+c,\ 0<\e\ll1,\\
&F_\e(x_0+\e\chx)-F_\e(x_0)= F_\e^{(l)}(x_0+\e\chx)-F_\e^{(l)}(x_0)+O(\e^{1/2}),\ |\chx|<A,
\end{split}
\ee
for any fixed $A>0$. The $O(\e^{1/2})$ term is uniform in $\chx$.

\subsection{Simplification of $F_\e^{(l)}$, the leading order term of $F_\e$}\label{ssec:simple Fl}

In what follows we use the function
\be\label{F step 2}\bs
&Q(x,\al):=\Phi(x,\al)-P(\al)=\Phi(x,\al)-\Phi(x(\al),\al).
\end{split}
\ee
To prove that $F_\e^{(l)}(x)$ is uniformly bounded for all $x\in\us_b$, it suffices to assume that $x$ is confined to a sufficiently small open subset $U^\prime\subset\us_b$.

Change variable $p=w+\Phi(\al)$ in \eqref{ker B}: 
\be\label{G step 1}\bs
F_\e^{(l)}(x)=-\frac1\pi \sum_{j_1\in J_\al}\int_{\br} & \frac{\chii(w)}w \sum_{j_2\in J_p} \bigg[\pa_w\int_{\I_\al} K(x,\al)\ik_\e(y-y_j)\dd \al\bigg]\\
&\times a_0(y_j)(p_{j_2}-P(\al_{j_1}))_+^{1/2}\dd w,\ y=(\al,w+\Phi(\al)).
\end{split}
\ee

Consider the integral with respect to $\al$:
\be\label{G step 2}\bs
\int_{\I_\al} & K(x,\al)\ik_\al\bigg(\frac{\al-\al_{j_1}}{\mu\e}\bigg)\pa_w\ik_p\bigg(\frac{w+\Phi(\al)-p_{j_2}}{\e}\bigg)\dd\al\\
=&\e^{-1}\int_{\I_\al} K(x,\al)\ik_\al\bigg(\frac{\al-\al_{j_1}}{\mu\e}\bigg)\ik_p^\prime\bigg(\frac{\Phi(\al)-\Phi(\al_{j_1})}\e+u\bigg)\dd\al,\\ 
u:=&(w+\Phi(\al_{j_1})-p_{j_2})/\e.
\end{split}
\ee
We have used that $\ik$ is the product of two kernels, see \eqref{interp-data}. Replace $\al_{j_1}$ with $\theta$, change variable $\al=\theta+\e\mu s$, and define
\be\label{h_eps}\bs
h(u;*):=&\mu\int_\br K(x,\theta+\e\mu s)\ik_\al(s)\ik_p\bigg(\frac{\Phi(\theta+\e\mu s)-\Phi(\theta)}\e+u\bigg)\dd s,\\ 
*=&(x,\theta,\e),\ u\in\br,\ x\in U^\prime,\ \theta\in I_\al,\ 0<\e\ll1.
\end{split}
\ee
The sum with respect to $j_2$ in \eqref{G step 1} simplifies by the introduction of the following function
\be\label{sum j}
H(\cht,\chq;*):=\sum_{j_2\in J_p}h^\prime(\cht+\chq-j_2;*)a_0(\theta,\e j_2)(j_2-\chq)_+^{1/2}.
\ee
The prime in $h^\prime$ denotes the derivative with respect to the first argument. To get the sum in \eqref{G step 1} we observe that $\e$ is factored out from $p_{j_2}-P(\al_{j_1})$ and use \eqref{sum j} with
\be
\cht=(w+Q(x,\al_{j_1}))/\e,\ \chq=P(\al_{j_1})/\e,\ \theta=\al_{j_1},\ j_1\in J_\al.
\ee
Some properties of the function $H$ are obtained in Lemma~\ref{lem:H props} in Appendix~\ref{sec:prf G props}.

Next we consider the integral with respect to $w$ in \eqref{G step 1}:
\be\label{sum jth v2}\bs
&\int_{\br}\frac {\chii(w)}w H\bigg(\frac{w+Q(x,\theta)}\e,\frac{P(\theta)}\e;*\bigg) \dd w\\
&=\int_{\br}\frac{\chii(\e\chw)}{\chw} H\bigg(\chw+\frac{Q(x,\theta)}\e,\frac{P(\theta)}\e;*\bigg) \dd \chw,
\end{split}
\ee
where $\theta=\al_{j_1}$. 
Define a key intermediate function
\be\label{key int}\bs
G(\cht,\chq;*):=&\int_{\br}\chii(\e\chw)\frac {H(\chw+\cht,\chq;*)}\chw \dd \chw.
\end{split}
\ee
In terms of $G$, \eqref{G step 1} becomes
\be\label{Fel def}\bs
F_\e^{(l)}(x)=-\frac{\e^{1/2}}{\pi}\sum_{j_1\in J_\al}G\bigg(\frac{Q(x,\al_{j_1})}\e,\frac{P(\al_{j_1})}\e;x,\al_{j_1},\e\bigg).
\end{split}
\ee
The extra factor $\e^{1/2}$ appears because it has been factored out from the term $(p_{j_2}-P(\al_{j_1}))_+^{1/2}$ in \eqref{G step 1} (cf. \eqref{sum j}).

The following lemma is proven in Appendix~\ref{sec:prf G props}.

\begin{lemma}\label{lem:G props} Suppose Assumptions~\ref{ass:Phi}--\ref{ass:interp ker} are satisfied. Pick any $A>0$. One has
\be\label{G props}\bs
|G(\cht,\chq;*)| \le & c\begin{cases} \e^{1/2}+(1+\cht)^{-1},&\cht>0,\\ 
(1-\cht)^{-1/2},&\cht<0,
\end{cases}
\end{split}
\ee
for some $c$ independent of $|\cht|\le A/\e$, $|\chq|\le A/\e$, $x\in U^\prime$, $\theta\in I_\al$, and $0<\e\ll1$.
\end{lemma}

We use this lemma in the proof of the following result, see Appendix~\ref{sec:prf Fel bnd}.

\begin{lemma}\label{lem:Fel bnd} Suppose Assumptions~\ref{ass:Phi}--\ref{ass:interp ker} are satisfied. Then $|F_\e^{(l)}(x)|\le c$ for all $x\in \us_b$ and $0<\e\ll1$.
\end{lemma}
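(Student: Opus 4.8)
The plan is to prove Lemma~\ref{lem:Fel bnd} by inserting the pointwise bounds of Lemma~\ref{lem:G props} into the sum \eqref{Fel def} and carefully counting how many terms $j_1$ contribute, taking into account the $\e^{1/2}$ prefactor. First I would recall that, by \eqref{Fel def},
\[
|F_\e^{(l)}(x)|\le \frac{\e^{1/2}}{\pi}\sum_{j_1}\Bigl|G\Bigl(\tfrac{Q(x,\al_{j_1})}\e,\tfrac{P(\al_{j_1})}\e;\al_{j_1},\e\Bigr)\Bigr|,
\]
where the sum runs over the $O(1/\e)$ indices $j_1$ with $\al_{j_1}\in\dom P$ (recall $\Delta\al=\mu\e$), and $Q(x,\al)=\Phi(x,\al)-P(\al)$ depends on the fixed $x\in U'\subset\us_b$.

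The key geometric input is the behavior of $\al\mapsto Q(x,\al)$ near the (finitely many) values $\al$ at which the curve $\s_y$ is tangent to $\s$ through $x$; at such $\al$, $\pa_\al Q(x,\al)=0$, and Assumption~\ref{ass:f props}\eqref{del curv} together with Lemma~\ref{lem: matr M} forces $\pa_\al^2 Q(x,\al)\ne0$, uniformly for $x$ in the small set $U'$. Hence $Q(x,\al)$ has a nondegenerate critical point, so $|Q(x,\al)|\asymp (\al-\al_*)^2$ near each tangency value $\al_*$, and $|\pa_\al Q(x,\al)|\asymp|\al-\al_*|$ there; away from all tangency values, $|Q(x,\al)|\ge c>0$ uniformly. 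I would split the $j_1$-sum into (a) a "near-tangency" block where $|\al_{j_1}-\al_*|\le\de$ for one of the finitely many $\al_*$, and (b) a "far" block. In block (b), $|\cht|=|Q(x,\al_{j_1})|/\e\ge c/\e$, so for $|\cht|\le A/\e$ the bound of Lemma~\ref{lem:G props} gives $|G|\le O(\e^{1/2})+O(\e)=O(\e^{1/2})$ (and for $|\cht|>A/\e$ one uses the same argument with a larger $A$, or notes the support of the data confines $\cht$); summing $O(1/\e)$ such terms against the $\e^{1/2}$ prefactor yields $O(\e^{1/2}\cdot\e^{-1}\cdot\e^{1/2})=O(1)$. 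Wait — that is only $O(1)$, which is exactly what we want, but it is borderline; I would be slightly more careful and use that in the far block $|\cht|^{-1}\le c\e$, so each term is $O(\e^{1/2}+\e)$, and $\e^{1/2}\sum_{j_1}O(\e^{1/2})=O(\e^{1/2}\cdot\e^{-1}\cdot\e^{1/2})=O(1)$, uniformly in $x$.

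The main obstacle is block (a), the near-tangency terms, because there $\cht=Q(x,\al_{j_1})/\e\to-\infty$ (the sign is fixed negative by the curvature inequality and our sign conventions, so that $Q(x,\al)<0$ near a tangency, matching the $(-\cht)^{-1/2}$ branch) but $|\cht|$ may be small, so the bound $|G|=O((-\cht)^{-1/2})$ is not itself summable with enough room to spare. Here I would parametrize $\al_{j_1}=\al_*+\mu\e k$ for integer $k$ with $|k|\le \de/(\mu\e)$, use $|Q(x,\al_{j_1})|\asymp(\mu\e k)^2$, hence $(-\cht)^{-1/2}\asymp \e^{1/2}/(\mu\e|k|)=c\e^{-1/2}/|k|$ for $k\ne0$, and treat the finitely many $k$ with $|k|$ bounded (including $k=0$) separately using the crude uniform bound $|G|=O(\e^{-1/2})$ that follows from either branch of Lemma~\ref{lem:G props} as $\cht\to0$ (or, if needed, a direct $\e$-independent estimate on $G$ for bounded $\cht$, which I would extract from the proof of Lemma~\ref{lem:G props}). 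Then the near-tangency contribution is bounded by
\[
\frac{\e^{1/2}}\pi\Bigl(\text{(finitely many terms)}\cdot O(\e^{-1/2})+\sum_{1\le|k|\le \de/(\mu\e)} c\,\e^{-1/2}/|k|\Bigr)=O(1)+O\bigl(\e^{1/2}\cdot\e^{-1/2}\log(1/\e)\bigr),
\]
and the logarithmic divergence is the real issue. To kill it I expect one must use the \emph{finer} form of the bound in Lemma~\ref{lem:G props} — namely that for $\cht\to-\infty$ one actually has $G=O((-\cht)^{-1})$ plus a genuinely summable-with-room remainder once the oscillation/cancellation in the $H$-sum is accounted for (the statement as written only claims $O((-\cht)^{-1/2})$, so I would revisit whether the proof of that lemma in fact gives the stronger decay, or whether an extra averaging over $j_1$ provides the missing half-power) — in which case $(-\cht)^{-1}\asymp \e/(\mu\e k)^2=c\e^{-1}/k^2$, the $k$-sum converges, and the total near-tangency bound becomes $O(\e^{1/2}\cdot\e^{-1})=O(\e^{-1/2})$, which is \emph{too big}, so this cannot be quite right either. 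The correct balance must be that $G=O((-\cht)^{-1/2})$ is sharp but the number of "bad" small-$|\cht|$ indices contributing the worst $\e^{-1/2}$ size is only $O(1)$ (a window $|\al_{j_1}-\al_*|\lesssim\sqrt\e$), so I would refine block (a) into: an inner window $|k|\lesssim \e^{-1/2}$ where $|\cht|\lesssim1$ and $|G|=O(1)$ (not $O(\e^{-1/2})$!) contributing $\e^{1/2}\cdot O(\e^{-1/2})\cdot O(1)=O(1)$, and an outer window $\e^{-1/2}\lesssim|k|\lesssim \e^{-1}$ where $(-\cht)^{-1/2}\asymp \e^{-1/2}/|k|$ contributing $\e^{1/2}\sum_{|k|\gtrsim\e^{-1/2}}\e^{-1/2}/|k|=O(\log(1/\e^{-1/2})/\e^{1/2})$... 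I will therefore, in the actual write-up, extract from the proof of Lemma~\ref{lem:G props} the precise claim that $G(\cht,\chq;\theta,\e)$ is \emph{bounded uniformly} for bounded $\cht$, and that the decay in $-\cht$ is fast enough after summation; concretely the clean route is to bound $\e^{1/2}\sum_{j_1}|G(\cdot)|$ by comparison with the integral $\int |G(Q(x,\al)/\e,\dots)|\,\dd\al$ via the step size $\mu\e$, substitute $t=Q(x,\al)/\e$ so $\dd\al\asymp \e\,\dd t/\sqrt{\e|t|}=\sqrt\e\,|t|^{-1/2}\dd t$ near the tangency, and check that $\int_1^{A/\e}|t|^{-1/2}\cdot|t|^{-1/2}\dd t=\int_1^{A/\e}|t|^{-1}\dd t=O(\log(1/\e))$ — which is then defeated by the overall $\e^{1/2}\cdot\e^{-1/2}\cdot\sqrt\e=\sqrt\e$ extra factor coming from the Jacobian, giving $O(\sqrt\e\log(1/\e))=o(1)$. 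So the resolution is that the Jacobian of the substitution near the quadratic tangency supplies the extra $\e^{1/2}$ that tames the logarithm, and the far region is $O(1)$ as computed; assembling both pieces gives $|F_\e^{(l)}(x)|\le c$ uniformly in $x\in\us_b$ and $0<\e\ll1$, completing the proof.
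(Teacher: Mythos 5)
Your proposal assembles the right raw materials --- the $\e^{1/2}$ prefactor against $O(1/\e)$ terms, the quadratic behavior of $Q(x,\al)$ at a tangency, the uniform boundedness of $G$ for bounded $\cht$, and an inner/outer window split --- but the treatment of the near-tangency block, which is the whole difficulty, rests on an incorrect picture of the sign of $Q$, and the concluding ``clean route'' contains an arithmetic error, so the argument does not close. By Lemma~\ref{lem:Q 2nd der}, $Q_{\al\al}^{\prime\prime}>0$ at a tangency, so $Q(x,\cdot)$ has a nondegenerate \emph{minimum} there with minimum value $-h\le 0$. Hence $Q<0$ only on a \emph{bounded} window $|\al-\al_*|\lesssim h^{1/2}$ (independent of $\e$), where $-Q=h-c(\al-\al_*)^2$, while $Q>0$ outside it with $Q\asymp(\al-\al_*)^2$. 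You instead take $Q<0$ throughout the near-tangency block with $|Q|\asymp(\al-\al_*)^2$ and feed this into the $O((-\cht)^{-1/2})$ branch of Lemma~\ref{lem:G props}; that mismatch is exactly what manufactures the logarithm $\sum_{|k|\gtrsim\e^{-1/2}}1/|k|$. The correct split (which is the paper's) is: for $Q>\e$ use the $O(\e^{1/2})+O(\cht^{-1})$ branch, so that $\e^{1/2}\sum_{j_1}\e/(\e+(\e j_1)^2)\le c\,\e^{1/2}\int_0^1(\e+x^2)^{-1}\dd x=O(1)$ with no logarithm; for $|Q|\le c\e$ there are only $O(\e^{-1/2})$ indices and $G$ is bounded, giving $O(1)$ after the prefactor; for $Q<-\e$ use $O((-\cht)^{-1/2})$ together with $-Q=h-(\e j_1)^2$ (after normalizing $Q^{\prime\prime}$), giving $\e^{1/2}\sum\big(\e/(h-(\e j_1)^2)\big)^{1/2}\le c\int_0^{h^{1/2}}(h-x^2)^{-1/2}\dd x=O(1)$ uniformly in $h$, because the endpoint singularity is integrable.

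Two further concrete problems. First, in your integral-comparison ``resolution'' the prefactor is $\e^{1/2}\cdot(\mu\e)^{-1}\cdot\e^{1/2}=O(1)$, not $O(\e^{1/2})$: there is no leftover $\sqrt\e$ to tame a $\log(1/\e)$, so by your accounting the near-tangency block would genuinely be $O(\log(1/\e))$ and the lemma would fail. The divergence is avoided only by using the correct branch of Lemma~\ref{lem:G props} on each sign region of $Q$, as above. Second, your claim that $|Q(x,\al)|\ge c$ away from tangency values is false: $Q$ has simple zeros wherever the curve $\s_{(\al,P(\al))}$ passes through $x$ transversally to $\s$. That case is easy --- $|Q|\asymp|\al-a_0|$ yields $\e^{1/2}\sum_{j}\big(j^{-1/2}+\e^{1/2}+j^{-1}\big)=O(1)$ --- but it is a separate case that must be treated, as the paper does in \eqref{Je est 1}.
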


\textcolor{black}{The number of segments $\Gamma_l$ described above Lemma~\ref{lem:conorm} is uniformly bounded, so} combining Lemma~\ref{lem:Fel bnd} with \eqref{Fe interm} proves Lemma~\ref{lem:F bnd final}.

\section{Computing the leading term $\Delta F_0$}\label{sec:fplt}

In this section we consider $x=x_0+\e\chx$ and assume for simplicity $x_0=0_2$. The reconstruction point is then $x=\e\chx$. Fix any $A_1>0$. Throughout this section we assume $|\chx|< A_1$, including in Lemmas~\ref{lem:del F0}, \ref{lem:del F0 v2}. 


\color{black}
From the proof of Lemma~\ref{lem:G props} and \eqref{h_eps}, \eqref{sum j}, \eqref{key int} it follows that 
\be
G(\cht,\chq;\e\chx,\theta,\e)=G(\cht,\chq;0_2,\theta,\e)+O(\e).
\ee
By \eqref{Fel def},
\be\label{Fel def v2}\bs
F_\e^{(l)}(\e\chx)=-\frac{\e^{1/2}}{\pi}\sum_{j_1\in J_\al}G\bigg(\frac{Q(x,\al_{j_1})}\e,\frac{P(\al_{j_1})}\e;0_2,\al_{j_1},\e\bigg)+O(\e^{1/2}).
\end{split}
\ee

\color{black}
Define 
\be\label{delF0 def a}
\Delta F_0(\chx):=\lim_{\e\to0}\big[F_\e(\e\chx)-F_\e(0_2)\big]. 
\ee
By the second line in \eqref{Fe interm},
\be\label{delF0 def}
\Delta F_0(\chx)=\lim_{\e\to0}\Delta F_\e^{(l)}(\e\chx),\ \Delta F_\e^{(l)}(\e\chx):=F_\e^{(l)}(\e\chx)-F_\e^{(l)}(0_2). 
\ee
\textcolor{black}{
From \eqref{Fel def v2} it follows that when computing $\Delta F_0(\chx)$, we can replace $\e\chx$ with $0_2$ in the list of arguments of $G$ when representing  $F_\e^{(l)}(\e\chx)$ in terms of $G$ in \eqref{delF0 def}. Therefore, we can drop $x$ from the list of arguments of $h$, $H$, and $G$ and change the meaning of $*=(x,\theta,\e)$ to  $*=(\theta,\e)$. Also, if the first argument of $Q$ is $x=0_2$, then it is omitted from notation.} 

Using \eqref{sum j}, define another intermediate function
\be\label{key int dg}\bs
\Delta G(r,\cht,\chq;*):=&G(r+\cht,\chq;*)-G(\cht,\chq;*)\\
=&\int_{\br}\chii(\e\chw)\frac {\Delta H(r,\chw+\cht,\chq;*)}\chw \dd \chw,\\
\Delta H(r,\cht,\chq;*):=&H(r+\cht,\chq;*)-H(\cht,\chq;*).
\end{split}
\ee
\textcolor{black}{We underscore that all the functions introduced in \eqref{delF0 def a}--\eqref{key int dg} are computed using a local patch of data $g=\chi_V\hat f$ as described in Lemma~\ref{lem:conorm}.}

By \eqref{Fel def}, the expression for $\Delta F_\e^{(l)}$ becomes:
\be\label{F via G}\bs
&\Delta F_\e^{(l)}(\e\chx)\\
&=-\frac{\e^{1/2}}\pi\sum_{j_1\in J_\al}\Delta G\bigg(\frac{Q(\e\chx,\al_{j_1})-Q(\al_{j_1})}\e,\frac{Q(\al_{j_1})}\e,\frac{P(\al_{j_1})}\e;\al_{j_1},\e\bigg).
\end{split}
\ee

The following result is proven in Appendix~\ref{sec:prf delG props}.
\begin{lemma}\label{lem:delG props} Suppose Assumptions~\ref{ass:Phi}--\ref{ass:interp ker} are satisfied. Pick any $A_2>0$. One has:
\textcolor{black}{
\be\label{delG prop 1}\bs
|\Delta G(r,\cht,\chq;*)| \le c\frac{|r|\ln(1/|r|)}{1+|\cht|},
\end{split}
\ee
\be\label{delG prop 2}\bs
|\Delta G(r+h,\cht,\chq;*)-\Delta G(r,\cht,\chq;*)| \le c\frac{|h|\ln(1/|h|)}{1+|\cht|},
\end{split}
\ee}
and
\be\label{delG prop 3}
|\Delta G(r,\cht+h,\chq;*)-\Delta G(r,\cht,\chq;*)|\le c\frac{|h|\ln(1/|h|)}{1+|\cht|}.
\ee
In the above formulas, the $c$’s are independent of $|r|\le A_2$, $|\cht|\le A_2/\e$, $|\chq|\le A_2/\e$, $\theta\in I_\al$, $0<\e\ll1$, and $0<h\ll1$.
\end{lemma}

\textcolor{black}{
We will apply Lemma~\ref{lem:delG props} to \eqref{F via G}, so we select $A_2$ as follows:
\be\label{for A2}
A_2=\max_{|\chx|\le A_1,\al_{j_1}\in I_\al,0<\e\ll1}\max\bigg(\frac{|Q(\e\chx,\al_{j_1})-Q(\al_{j_1})|}\e,|Q(\al_{j_1})|,|P(\al_{j_1})|\bigg).
\ee
Recall that $Q(\al)$ stands for $Q(x_0=0_2,\al)$. Since $\chx$ is confined to a bounded set and $Q$ is smooth, the right-hand side of \eqref{for A2} is bounded.}

Suppose first that $P(\al)$ is {\it not} associated with $x_0$. This means there is a neighborhood $U^\prime$ of $x_0$ such that no curve $\s_{(\al,P(\al))}$, $\al\in I_\al$, is tangent to $\s\cap U^\prime$. There are two possibilities: (a) Neither of these curves passes through $x_0$ (in this case $Q(\al)$ is bounded away from zero on $I_\al$) and (b) one of the curves contains $x_0$ (in this case $Q(\tilde\al)=0$ for some $\tilde\al\in I_\al$).

(a) If $|Q(\al)|>c$, $\al\in I_\al$, the inequality \eqref{delG prop 1} implies 
\be\label{wrong P a}
\e^{1/2}\tsum_{j_1\in J_\al}|\Delta G|\le c \e^{1/2}\tsum_{j_1=1}^{1/\e}\e=O(\e^{1/2}).
\ee
Here and below, the arguments of $\Delta G$ are the same as in \eqref{F via G}.

(b) If $Q(\tilde \al)=0$, but $|Q^\prime(\al)|>c$ on $I_\al$, then $|Q(\al)|\asymp |\al-\tilde\al|$, $\al\in I_\al$, and \eqref{delG prop 1} leads to
\be\label{wrong P b}
\textcolor{black}{\e^{1/2}\tsum_{j_1\in J_\al}|\Delta G|\le c \e^{1/2}\tsum_{j_1=1}^{1/\e} \frac{\e}{\e j_1}=O(\e^{1/2}\ln(1/\e)).}
\ee

Suppose now $P(\al)$ is associated with $x_0$. Therefore, $I_\al$ is a small neighborhood of $\al_0$. Recall that we assumed $\al_0=0$. By the definition of the function $Q$ (see \eqref{F step 2}), $Q(0)=0$. Applying Lemma~\ref{lem:Q 2nd der} with $\tilde\al=\al_0=0$ and $\tilde x=x_0=0_2$ gives $Q_\al^\prime(0)=0$. 

Pick any $A\gg 1$. By Lemma~\ref{lem:Q 2nd der}, $Q_{\al\al}^{\prime\prime}(0)>0$. Hence, by continuity,
\be\label{Q bnd}
Q(\al)\ge c \al^2,\quad |\al|\le \de,
\ee
for some $0<\de\ll1$. Also, $Q(\al)\ge c$, $|\al|\ge\de$, $\al\in I_\al$.
By \eqref{delG prop 1}, this implies
\be\label{F via G part1}\bs
&\e^{1/2}\bigg(\sum_{A\e^{1/2}\le|\al_{j_1}|\le\de}+\sum_{j_1\in J_\al,|\al_{j_1}|>\de}\bigg)|\Delta G|\\
&\le c \e^{1/2}\bigg(\sum_{j_1>A\e^{-1/2}}\frac{\e}{(\e j_1)^2}+\sum_{j_1=\de/\e}^{1/\e}\e\bigg)=O(A^{-1})+O(\e^{1/2}),
\end{split}
\ee

By \eqref{F step 2}, \eqref{delG prop 2} and \eqref{delG prop 3}, for $|\al_{j_1}|\le A\e^{1/2}$:
\be\label{del G}\bs
&\Delta G\bigg(\frac{Q(\e\chx,\al_{j_1})-Q(\al_{j_1})}\e,\frac{Q(\al_{j_1})}\e,\frac{P(\al_{j_1})}\e;\al_{j_1},\e\bigg)\\
&=\Delta G\bigg(\dd_x\Phi\cdot\chx+O(\e^{1/2}),\frac{Q_{\al\al}^{\prime\prime}}2\e (\mu j_1)^2+ O(\e^{1/2}),\frac{P(\e\mu j_1)}\e;\al_{j_1},\e\bigg)\\
&=\Delta G\bigg(\dd_x\Phi\cdot\chx,\frac{Q_{\al\al}^{\prime\prime}}2 \e(\mu j_1)^2,\frac{P(\e\mu j_1)}\e;\al_{j_1},\e\bigg)+O(\e^{1/2}\ln(1/\e)).
\end{split}
\ee

Combine \eqref{wrong P a}, \eqref{wrong P b}, \eqref{F via G part1}, \eqref{del G}, and use \eqref{F via G} to obtain
\be\label{F via G part2}\bs
\Delta F_\e^{(l)}(\e\chx)= & -\frac{\e^{1/2}}\pi\sum_{|\al_{j_1}|\le A\e^{1/2}}\Delta G\bigg(\dd_x\Phi\cdot\chx,\frac{Q_{\al\al}^{\prime\prime}}2 \e (\mu j_1)^2,\frac{P(\e\mu j_1)}\e;\al_{j_1},\e\bigg)\\
&+O(\e^{1/2}\ln(1/\e))+O(1/A).
\end{split}
\ee
Here we have used that there are $O(\e^{1/2})$ values of $j_1$ such that $|\al_{j_1}|\le A\e^{1/2}$.

\textcolor{black}{Our analysis shows (see \eqref{wrong P a}, \eqref{wrong P b}) that if $\Gamma_l$ is not associated with $x_0$, then the data in $V_l=I_\al^l\times I_p^l$ does not contribute to the DTB at $x_0$.}

The following result is proven in Appendix~\ref{sec:prf del F0}.
\begin{lemma}\label{lem:del F0} Under the assumptions of Theorem~\ref{thm:main res} one has
\be\label{del F0}\bs
\Delta F_0(\chx)=-\frac1{\pi}\int_{|s|\le A}\int_0^1 &\big[G_0\big(\dd_x\Phi\cdot\chx+(Q_{\al\al}^{\prime\prime}/2)s^2,\chq\big)\\
&-G_0\big((Q_{\al\al}^{\prime\prime}/2)s^2,\chq\big)\big]\dd\chq\dd s+O(1/A),
\end{split}
\ee
where
\be\label{G0H0}\bs
&G_0(\cht,\chq):=\int_{\br}\frac {H_0(\chw+\cht,\chq)}\chw \dd \chw,\\
&H_0(\cht,\chq):=a_0(y_0)\sum_{j_2\in\mathbb Z}h_0^\prime(\cht+\chq-j_2)(j_2-\chq)_+^{1/2},\\
&h_0(u):= K(x_0,\al_0)\int_\br \ik_\al(s)\ik_p(\mu s\Phi_{\al}^\prime(0)+u)\dd s.
\end{split}
\ee
\end{lemma}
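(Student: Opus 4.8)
The plan is to prove Lemma~\ref{lem:del F0} by passing to the limit $\e\to0$ in the finite sum \eqref{F via G part2}. I would first reparametrize: set $s_{j_1}:=\e^{1/2}\mu j_1=\al_{j_1}/\e^{1/2}$, so that $|\al_{j_1}|\le A\e^{1/2}$ reads $|s_{j_1}|\le A$, consecutive indices are spaced by $\Delta s=\e^{1/2}\mu$, and $\e^{1/2}\sum_{j_1}(\cdot)=\mu^{-1}\sum_{j_1}(\cdot)\,\Delta s$. With this, the second argument of $\Delta G$ in \eqref{F via G part2} equals $(Q_{\al\al}^{\prime\prime}/2)\e(\mu j_1)^2=(Q_{\al\al}^{\prime\prime}/2)s_{j_1}^2$. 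Taylor-expanding $P(\mu\e j_1)$ at $0$ and using $Q_\al^\prime(0)=0$ from Lemma~\ref{lem:Q 2nd der} (hence $P^\prime(0)=\Phi_\al^\prime(x_0,\al_0)$ by \eqref{F step 2}), the third argument becomes
\[
\frac{P(\mu\e j_1)}{\e}=\frac{p_0}{\e}+\mu j_1\,\Phi_\al^\prime(x_0,\al_0)+\beta(s_{j_1})+o(1),\qquad \beta(s):=\tfrac12 P^{\prime\prime}(0)\,s^2,
\]
with the $o(1)$ uniform over $|s_{j_1}|\le A$. (The $P$'s not associated with $x_0$ contribute $O(\e^{1/2}\ln^2(1/\e))$ by \eqref{wrong P a}--\eqref{wrong P b} and are already folded into the error in \eqref{F via G part2}.)

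\textbf{Limit of the integrand.} Next I would establish $\Delta G(r,\cht,\chq;\theta,\e)\to\mu\,\Delta G_0(r,\cht,\chq)$ as $\e\to0,\ \theta\to0$, uniformly on the parameter ranges in play. This rests on: (i) $h(u;\theta,\e)\to\mu h_0(u)$ uniformly in $u$, by dominated convergence in \eqref{h_eps} — the integrand converges pointwise in the internal variable, is dominated by a multiple of $\ik_\al$, and $(\Phi(\theta+\e\mu s)-\Phi(\theta))/\e\to\mu s\,\Phi_\al^\prime(x_0,\al_0)$ with $K$ smooth near $(x_0,\al_0)$; (ii) continuity of $a_0$, so $a_0(\theta,\e j_2)\to a_0(y_0)$ uniformly over the finitely many $j_2$ that contribute to \eqref{sum j} (as $\ik$, hence $h_0^\prime$, has compact support), whence $H\to\mu H_0$ and $\Delta H\to\mu\,\Delta H_0$; (iii) $\chii(\e\chw)\to1$, which with the local finiteness of the $j_2$-sum and the principal-value reading of the $1/\chw$ singularity in \eqref{key int dg} lets me pass the limit under the $\chw$-integral. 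The uniform estimates of Lemma~\ref{lem:delG props} dominate the whole family, so the convergence is uniform on the compact sets involved.

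\textbf{Double sum $\to$ double integral.} The crux is to show
\[
-\frac{\e^{1/2}}{\pi}\sum_{|\al_{j_1}|\le A\e^{1/2}}\Delta G\Big(\Phi_x^\prime\chx,\tfrac{Q_{\al\al}^{\prime\prime}}{2}s_{j_1}^2,\tfrac{P(\mu\e j_1)}{\e};\al_{j_1},\e\Big)\longrightarrow-\frac1\pi\int_{|s|\le A}\int_0^1\Delta G_0\Big(\Phi_x^\prime\chx,\tfrac{Q_{\al\al}^{\prime\prime}}{2}s^2,\chq\Big)\dd\chq\,\dd s.
\]
This is a two-scale limit: in $j_1$, the quantities $r=\Phi_x^\prime\chx$ (fixed) and $\cht_{j_1}=(Q_{\al\al}^{\prime\prime}/2)s_{j_1}^2$ vary slowly — essentially constant over blocks of length $L_\e$ with $1\ll L_\e\ll\e^{-1/2}$ — while $P(\mu\e j_1)/\e\equiv\mu j_1\,\Phi_\al^\prime(x_0,\al_0)+(\text{essentially constant on a block})\ (\mathrm{mod}\ 1)$ oscillates. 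Now $\Delta G_0$ is $1$-periodic in $\chq$ (reindex the $j_2$-sum in \eqref{sum j}) and continuous in $\chq$ (since $\ik\in C_0^2$ and $(\cdot)_+^{1/2}$ is continuous), and it inherits from \eqref{G props Lip} a modulus of continuity in $\cht$. Since $\mu\,\Phi_\al^\prime(x_0,\al_0)$ is irrational by the hypotheses of Theorem~\ref{thm:main res}, Weyl's equidistribution theorem — uniform in the block offset — gives that the block-average of $\mu\,\Delta G_0(r,\cht_{j_1},P(\mu\e j_1)/\e)$ tends to $\mu\int_0^1\Delta G_0(r,\bar\cht,\chq)\,\dd\chq$, $\bar\cht$ the block value of $\cht$, with error governed by $L_\e$ and, via \eqref{G props Lip}, by $|\cht_{j_1}-\bar\cht|=O(L_\e\e^{1/2})$. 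Summing the block averages against the widths $\Delta s=\e^{1/2}\mu$ and using $\e^{1/2}\sum_{j_1}(\cdot)=\mu^{-1}\sum_{j_1}(\cdot)\,\Delta s$ produces a Riemann sum for $\mu^{-1}\int_{|s|\le A}\big(\mu\int_0^1\Delta G_0\big)\,\dd s$ — the two factors of $\mu$ cancel. The tails with $|\cht_{j_1}|$ large are negligible by the $O(|\cht|^{-1})$ bound of Lemma~\ref{lem:delG props}, as in \eqref{F via G part1}. Substituting $\Delta G_0(r,\cht,\chq)=G_0(r+\cht,\chq)-G_0(\cht,\chq)$ and invoking \eqref{F via G part2} (whose $O(\e^{1/2}\ln^2(1/\e))$ vanishes and whose $O(1/A)$ persists) then yields \eqref{del F0}.

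\textbf{Main obstacle.} The delicate step is the third one: interleaving the Weyl equidistribution (fast oscillation in $j_1$) with the Riemann-sum convergence (slow variation in $j_1$), all uniformly in $\chx$, $|\chx|\le A_0$. The clean route is to fix $L_\e$ with $1\ll L_\e\ll\e^{-1/2}$, bound the per-block equidistribution error by a discrepancy (Erd\H{o}s--Tur\'an-type) estimate, control the block-variation of $r$ and $\cht$ via \eqref{G props Lip} and the $|\pa_r\Delta G|=O(|\cht|^{-1})$ bound of Lemma~\ref{lem:delG props}, and sum the $O(A/L_\e)$ blocks, letting $L_\e\to\infty$ slowly enough that all contributions vanish. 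A secondary technical point is justifying the exchange of $\lim_{\e\to0}$ with the principal-value $\chw$-integrals defining $G$ and $G_0$; this is handled by the decay estimates already recorded in Lemmas~\ref{lem:G props} and \ref{lem:delG props}.
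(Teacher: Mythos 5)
Your proposal is correct and follows essentially the same route as the paper: first replace the $\e$-dependent kernel by its $\e$-independent limit (your step ``$\Delta G\to\mu\Delta G_0$'' is the paper's quantitative Lemma~\ref{lem:G approx}, $G=\mu G_0+O(\e^{1/2})$), then use a blocking argument in $j_1$ combined with Weyl equidistribution of $\mu\Phi_\al^\prime(x_0,\al_0)j_1$ mod 1, periodicity and H\"older continuity of $G_0$ in $\chq$, the modulus-of-continuity bound in $\cht$, and the decay estimates of Lemma~\ref{lem:delG props} for the tails, exactly as in the paper's partition of $[-A,A]$ into blocks $B_k$ with $K\to\infty$ after $\e\to0$. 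The only cosmetic differences are that you work directly with $\Delta G,\Delta G_0$ and blocks of $L_\e$ indices rather than the paper's fixed intervals $B_k$ and its remark that the shifted first argument is handled identically.
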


See \eqref{K_1 K} for the definition of $K(x,\al)$. Letting $A\to\infty$ in \eqref{del F0} gives
\be\label{last sum lim2}\bs
&\Delta F_0(\chx)\\
&=-\frac1{\pi}\int_{\br}\int_0^1 \big[G_0\big(\dd_x\Phi\cdot\chx+(Q_{\al\al}^{\prime\prime}/2)s^2,\chq\big)
-G_0\big((Q_{\al\al}^{\prime\prime}/2)s^2,\chq\big)\big]\dd\chq\dd s\\
&=-\frac{a_0(y_0)}{\pi}\int_{\br}\int_{\br}\ioi\frac 1\chw \big[h_0^\prime\big(\chw+\dd_x\Phi\cdot\chx+(Q_{\al\al}^{\prime\prime}/2) s^2-\chq\big)\\
&\hspace{3.5cm}-h_0^\prime\big(\chw+(Q_{\al\al}^{\prime\prime}/2) s^2-\chq\big)\big]\chq^{1/2}\dd \chq\dd s \dd \chw.
\end{split}
\ee
\color{black}
Comparing \eqref{sum j} with \eqref{G0H0}, we see that the functions $H$ and $H_0$ are analogous and they have the same asymptotic behavior as $\cht\to\infty$. More precisely, $H_0$ satisfies Lemma~\ref{lem:H props} with $\e=0$. To apply this lemma to obtain the asymptotics of $H_0$, we set $\e=0$ and drop all the restrictions of the kind $\cht<A/\e$. The same modifications should be made in the proof of Lemma~\ref{lem:H props} in Appendix~\ref{sec:prf H props}.

Likewise, the functions $G$ of \eqref{key int} and $G_0$ of  \eqref{G0H0} are analogous and they also have the same asymptotic behavior as $\cht\to\infty$. Repeating the argument in the proof of \eqref{delG prop 1} (see Appendix~\ref{sec:prf delG props} from the beginning through the paragraph following \eqref{J2pr1}) and replacing $H$ with $H_0$, we see that $G_0(\cht+r,\chq)-G_0(\cht,\chq)$ satisfies the same estimates as $\Delta G$ in \eqref{delG prop 1}, so the first integral in \eqref{last sum lim2} is absolutely convergent. 

\color{black}
The following result is proven in Appendix~\ref{sec:prf lem del F0 v2}.
\begin{lemma}\label{lem:del F0 v2} Under the assumptions of Theorem~\ref{thm:main res} one has
\be\label{last sum lim3}
\Delta F_0(\chx)=C\int_0^{\dd_x\Phi\cdot\chx} h_0(-t)\dd t,\ C:=\frac{a_0(y_0)}{4(Q_{\al\al}^{\prime\prime}/2)^{1/2}}.
\ee
\end{lemma}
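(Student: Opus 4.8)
The plan is to start from the double/triple integral representation of $\Delta F_0(\chx)$ in \eqref{last sum lim2} and reduce it, step by step, to the single integral in \eqref{last sum lim3}. First I would interchange the order of integration, isolating the $s$-integral: since $s$ appears only through $(Q_{\al\al}^{\prime\prime}/2)s^2$ inside the argument of $h_0^\prime$, the inner piece of \eqref{last sum lim2} has the structure $\int_{\br}\big[\Psi(A+(Q_{\al\al}^{\prime\prime}/2)s^2)-\Psi((Q_{\al\al}^{\prime\prime}/2)s^2)\big]\dd s$ for a suitable antiderivative-type function $\Psi$, where $A=\Phi_x^\prime\chx$. The natural move is the substitution $\tau=(Q_{\al\al}^{\prime\prime}/2)s^2$, i.e. $s=\big(2\tau/Q_{\al\al}^{\prime\prime}\big)^{1/2}$, $\dd s=\big(2Q_{\al\al}^{\prime\prime}\tau\big)^{-1/2}\dd\tau$ on $s>0$ (doubling to account for $s<0$). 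This turns the $s$-integral into $\big(2/Q_{\al\al}^{\prime\prime}\big)^{1/2}\int_0^{\infty}\tau^{-1/2}\big[\Psi(A+\tau)-\Psi(\tau)\big]\dd\tau$, which is exactly the kind of Abel-type integral that collapses nicely.

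The second step is to carry out the $\chq$-integral. Writing $G_0(\cht,\chq)-G_0(\cht',\chq)$ out via \eqref{G0H0} and recalling that the $\chq^{1/2}$ weight and the shifts by $\chq$ in $h_0^\prime$ combine, I would exploit the identity $\int_0^{\infty}\chq^{1/2}\,\phi'(c-\chq)\,\dd\chq$ type reductions — integrating by parts in $\chq$ to move the derivative off $h_0^\prime$, and using that $h_0$ is integrable with the decay coming from Assumption~\ref{ass:interp ker}(\ref{ikcont}). The remaining $\chw$-integral is a principal-value/Hilbert-transform-type convolution; because $h_0^\prime$ integrates to zero (it is a derivative of a compactly supported function) the difference $h_0^\prime(\chw+\cdots)-h_0^\prime(\chw+\cdots)$ is what makes $\int\chw^{-1}(\cdots)\,\dd\chw$ converge, and this integral should evaluate to $\pi$ times a difference of values of $h_0$ (the Hilbert transform of a derivative, or directly: $\int_{\br}\chw^{-1}[F(\chw+a)-F(\chw+b)]\dd\chw=0$ for the primitive, so one must instead track the constant produced by the combined $\chq$ and $\chw$ integrations). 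Getting the bookkeeping of these three nested integrals right, and in particular pinning down the precise constant $C=a_0(y_0)/\big(4(Q_{\al\al}^{\prime\prime}/2)^{1/2}\big)$, is where I expect the real work to be.

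A cleaner route, which I would try in parallel, is to differentiate $\Delta F_0(\chx)$ with respect to the scalar $r:=\Phi_x^\prime\chx$ (legitimate by the absolute convergence noted after \eqref{last sum lim2} and the bound on $\pa_r\Delta G$ in \eqref{delG props dg}). Then $\pa_r\Delta F_0 = -\frac{a_0(y_0)}{\pi}\int_{\br}\int_{\br}\int_0^{\infty}\chw^{-1}h_0''\big(\chw+r+(Q_{\al\al}^{\prime\prime}/2)s^2-\chq\big)\chq^{1/2}\,\dd\chq\,\dd s\,\dd\chw$; the $\chw$-integral is now an honest Hilbert transform of $h_0''$, the $s$-substitution $\tau=(Q_{\al\al}^{\prime\prime}/2)s^2$ produces the factor $(2/Q_{\al\al}^{\prime\prime})^{1/2}$ and a $\tau^{-1/2}$, and the $\tau$- and $\chq$-integrals together form a Beta-function/fractional-integration pair ($\int_0^{\infty}\tau^{-1/2}\int_0^{\infty}\chq^{1/2}\delta$-type kernel) whose net effect is multiplication by a constant, leaving $\pa_r\Delta F_0(\chx)=C\,h_0(-r)$. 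Integrating in $r$ from $0$, and using $\Delta F_0(0_2)=0$ (immediate from \eqref{delF0 def}), yields \eqref{last sum lim3}. The main obstacle in either approach is the same: justifying the interchanges of integration near the singularities $\chw=0$, $\chq=0$, $s=0$ and then correctly evaluating the resulting fractional-integral/Hilbert-transform composition to extract the constant $C$; the decay and smoothness afforded by Assumptions~\ref{ass:interp ker} and the bounds in Lemmas~\ref{lem:G props}--\ref{lem:delG props} are what make all of this rigorous.
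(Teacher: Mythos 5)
Your overall strategy is the right one, and your second route (differentiating in $r=\Phi_x^\prime\chx$ and recognizing the $\chw$-, $s$-, $\chq$-integrations as a composition of a Hilbert transform with two fractional integrals) is essentially a reformulation of what the paper does: the paper substitutes $u=(Q_{\al\al}^{\prime\prime}/2)s^2$ as in \eqref{last sum transf}, passes to the Fourier domain, and uses the Gelfand--Shilov formulas $\CF(x_+^{a-1})=e^{ai\pi/2}\Gamma(a)(\la+i0)^{-a}$ together with $\CF(1/\chw)\propto\text{sgn}\la$ to collapse the product in \eqref{last sum ft1}--\eqref{last sum ft2}. However, as written your proposal has a genuine gap: the decisive computation is never carried out. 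In both of your routes you stop exactly where the lemma's content lies -- in the first route you say outright that pinning down $C$ is ``where the real work'' is, and in the second you assert that the $\tau$- and $\chq$-integrations combined with the Hilbert transform act on $h_0^{\prime\prime}$ as ``multiplication by a constant'' without establishing it. This assertion is not automatic. By homogeneity the associated Fourier multiplier has degree $0$, so a priori it has the form $a+b\,\text{sgn}\la$; a nonzero $b$ would contaminate the answer with a Hilbert transform of $h_0$ rather than give $C\,h_0(-r)$. Ruling this out, and extracting the value $C=a_0(y_0)/\big(4(Q_{\al\al}^{\prime\prime}/2)^{1/2}\big)$, requires tracking the phases $e^{3i\pi/4}$, $e^{-i\pi/4}$ coming from $(\la\pm i0)^{-3/2}$, $(\la\pm i0)^{-1/2}$ and the factor $-i\la$ from $h_0^\prime$, which is precisely the content of \eqref{last sum ft1}--\eqref{last sum ft2}; without that phase bookkeeping the statement of the lemma is not proved, since the specific constant and the exact form $\int_0^{\Phi_x^\prime\chx}h_0(-t)\dd t$ are the whole point.

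Two smaller issues. First, in your first route the claim that $\int_\br\chw^{-1}\big[F(\chw+a)-F(\chw+b)\big]\dd\chw=0$ for a primitive $F$ is false in general (it equals a difference of Hilbert-transform values), and you yourself back away from it; this is symptomatic of the missing computation. Second, in the differentiated route the resulting triple integral is not obviously absolutely convergent: the absolute convergence noted after \eqref{last sum lim2} concerns the undifferentiated difference, while convergence after replacing the difference by $h_0^{\prime\prime}$ relies on the moment cancellations $\int h_0^{\prime\prime}=\int v\,h_0^{\prime\prime}(v)\dd v=0$ to gain decay in $u$ and $\chw$; you would need to say this to justify differentiating under the integral sign. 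The paper avoids both issues by working directly with the difference structure in \eqref{last sum lim2} and doing the entire evaluation in the Fourier domain.
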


\section{Computing the DTB function. End of proof of Theorem~\ref{thm:main res}}\label{sec:DTB fn}
Pick any $A_0>0$. Consistent with Theorem~\ref{thm:main res}, throughout this section we assume $|\chx|< A_0$. \textcolor{black}{As established in the preceding section, it suffices to consider only the segment $\Gamma_l$ (and the corresponding $a_0(y),P(\al)$) that is associated with $x_0$.}

\subsection{Reduction of the DTB function to an integral}\label{ssec:red to int}
Now we study $\G_\e$ introduced in \eqref{lead sing fe}. As usual, suppose $x_0=0_2$. Change variables $\hxi=\e\xi$ and $\chx=x/\e$, $\chz=z/\e$ in \eqref{lead sing fe} and express $\G_\e$ as follows:
\be\bs\label{to scl PDO 2}
&\G_\e(\chx):=\frac{1}{(2\pi)^2}\int_{\us/\e}\int_{\br^2} \big(\tilde D_0(\e \chx,\hxi)+\kappa|\hxi|^3\big)^{-1}e^{-i\hxi\cdot(\chx-\chz)}\dd\hxi\, F_\e(\e\chz) \dd \chz.
\end{split}
\ee
Since 
\be\label{at infty}
\tilde D_0(\e \chx,\hxi)+\kappa|\hxi|^3\asymp |\hxi|^3,\ |\hxi|\to\infty,
\ee
the integral with respect to $\hxi$ in \eqref{to scl PDO 2} is absolutely convergent. 

Our aim is to reconstruct the jump of $f$ at $x_0=0_2$, so we compute $\Delta \G_\e(\chx):=\G_\e (\chx)-\G_\e(0_2)$. By \eqref{to scl PDO 2}, $\Delta \G_\e(\chx)=\Delta \G_\e^\os(\chx)+\Delta \G_\e^\tp(\chx)$, where
\be\bs\label{del bfe 1}
\Delta \G_\e^\os(\chx)=\frac1{(2\pi)^2}\int_{\us/\e}\int_{\br^2} & \big[(\tilde D_0(\e \chx,\hxi)+\kappa|\hxi|^3)^{-1}-(\tilde D_0(0_2,\hxi)+\kappa|\hxi|^3)^{-1}\big]\\
&\times e^{-i\hxi\cdot(\chx-\chz)}\dd\hxi\, F_\e(\e\chz) \dd \chz,
\end{split}
\ee
and
\be\bs\label{del bfe 2}
&\Delta \G_\e^\tp(\chx)=\frac1{(2\pi)^2}\int_{\us/\e}\int_{\br^2}  \frac{e^{-i\hxi\cdot\chx}-1}{\tilde D_0(0_2,\hxi)+\kappa|\hxi|^3}e^{i\hxi\cdot\chz}\dd\hxi\, F_\e(\e\chz) \dd \chz.
\end{split}
\ee

The following result is proven in Appendix~\ref{ssec:prf int decay 2d}.

\begin{lemma}\label{lem:int decay 2d}
Let $I\subset \I_\al$ be an interval. Let $D(\la,\theta,\e)\in C^\infty\big((0,\infty)\times I \times (0,\e_0)\big)$ be a function which satisfies
\be\label{D ineqs}
\big|\pa_\theta^l\pa_\la^k D(\la,\theta,\e)\big|\le c_{k,l}\e^n(1+\la)^{-3},\ (\la,\theta,\e)\in (0,\infty)\times I\times (0,\e_0),
\ee
for any $k=0,1,2$, $l\in\N_0$, and some $c_{k,l}$ independent of $\e$, $\e_0>0$, and $n\in\N_0$. Define 
\be\label{lim1}
J(r):=\int_0^\infty \int_I D(\la,\theta,\e)e^{i r\la\sin\theta}\dd\theta\la \dd\la.
\ee
Then
\be\label{lim3}
J(r)=\e^n\begin{cases} O(r^{-2})& \\ O(r^{-3}),& \text{if }D(\la=0^+,\theta,\e)\equiv 0, \end{cases}\ r\to\infty,
\ee
uniformly in $\e\in(0,\e_0)$. 
\end{lemma}

The expression in brackets in \eqref{del bfe 1} satisfies \eqref{D ineqs} with $n=1$ uniformly in $\chx$ within bounded sets (see \eqref{homog 1}). Here $\hxi=\la(\cos\theta,\sin\theta)$, $\la=|\hxi|$. By the top case in \eqref{lim3} with $n=1$, the integral with respect to $\hxi$ in \eqref{del bfe 1} is $\e O(|\chz|^{-2})$, $|\chz|\to\infty$. Recall that $\chx$ is confined to a bounded set. Additionally, the support of $F_\e(\e\chz)$ is of size $O(1/\e)$. Hence 
\be\label{del G 1st}
|\Delta \G_\e^\os(\chx)|\le c\e\int_{|\chz|\le 1/\e}(1+|\chz|)^{-2}\dd\chz=O(\e\ln(1/\e)). 
\ee

The fraction in \eqref{del bfe 2} satisfies \eqref{D ineqs} with $n=0$ uniformly in $\chx$ within bounded sets. Moreover, its value at $\la=0^+$ equals zero. By the bottom case in \eqref{lim3} with $n=0$, the integral with respect to $\hxi$ is $O(|\chz|^{-3})$, $|\chz|\to\infty$. This means that the integral with respect to $\chz$ in \eqref{del bfe 2} is absolutely convergent and uniformly bounded for $0<\e\ll1$. In both integrals we have used Lemma~\ref{lem:F bnd final}. 

In Section~\ref{sec:fplt} we computed the limit
\be\label{F0 def}
\Delta F_0(\chx)=\lim_{\e\to0}\Delta F_\e(\e\chx),\ |\chx|\le A_1,  
\ee
for any $A_1>0$ (see \eqref{delF0 def a} and \eqref{last sum lim3}). Pick any $A_1\gg 1$ and use \eqref{del G 1st} to obtain from \eqref{to scl PDO 2}, \eqref{del bfe 1}, \eqref{del bfe 2}:
\be\bs\label{to scl PDO 3}
\Delta \G_\e (\chx)=&\frac{1}{(2\pi)^2}\int_{|\chz|\le A_1}\bigg[\int_{\br^2} \frac{e^{-i\hxi\cdot\chx}-1}{\tilde D_0(0_2,\hxi)+\kappa|\hxi|^3}e^{i\hxi\cdot\chz}\dd\hxi\bigg] \Delta F_\e(\e\chz)\dd \chz\\
&+\frac{F_\e(0)}{(2\pi)^2}\int_{\br^2} \frac{e^{-i\hxi\cdot\chx}-1}{\tilde D_0(0_2,\hxi)+\kappa|\hxi|^3}\tilde\vartheta_{A_1}(\hxi)\dd\hxi\\
&+O(1/A_1)+O(\e\ln(1/\e)),
\end{split}
\ee
where $\tilde\vartheta_{A_1}(\hxi)$ is the Fourier transform of the characteristic function of the ball $\{|\chx|\le A_1\}$. By Lemma~\ref{lem:int decay 2d} with $n=0$, the integral in brackets above is $O(|\chz|^{-3})$, $|\chz|\to\infty$. Use \eqref{F0 def}, and the Lebesgue dominated convergence theorem to take the limit as $\e\to0$ in the first integral in \eqref{to scl PDO 3} to obtain:
\be\bs\label{rhs lim}
\frac{1}{(2\pi)^2}\int_{|\chz|\le A_1}\int_{\br^2} \frac{e^{-i\hxi\cdot\chx}-1}{\tilde D_0(0_2,\hxi)+\kappa|\hxi|^3} e^{i\hxi\cdot\chz}\dd\hxi\,\Delta F_0(\chz)\dd \chz.
\end{split}
\ee
Since $A_1\gg 1$ can be arbitrarily large and $\tilde\vartheta_{A_1}(\hxi)\to\de(\hxi)$ as $A_1\to\infty$, \eqref{to scl PDO 3}, \eqref{rhs lim}, and Lemma~\ref{lem:F bnd final} imply
\be\bs\label{rhs lim 2}
\lim_{\e\to0}\Delta \G_\e (\chx)=&\frac{1}{(2\pi)^2}\int_{\br^2}\int_{\br^2} \frac{e^{-i\hxi\cdot\chx}-1}{\tilde D_0(0_2,\hxi)+\kappa|\hxi|^3} e^{i\hxi\cdot\chz}\dd\hxi\,\Delta F_0(\chz)\dd \chz.
\end{split}
\ee

\subsection{Evaluation of the integral in \eqref{rhs lim 2}}\label{ssec:eval of int}
The last integral simplifies using that $\Delta F_0(\chx)=\varphi(\dd_x\Phi\cdot\chx)$ for some $\varphi$ (see \eqref{last sum lim3}). Simple transformations give
\be\bs\label{rhs lim 3}
\Delta \G_0 (\chx):=&\lim_{\e\to0}\Delta \G_\e (\chx)\\
=&\frac{1}{2\pi}\int_{\br}\bigg[ \int_{\br} \frac{e^{-i\la\vec\Theta\cdot\chx}-1}{\tilde D_0(0_2,\vec\Theta_0)+\kappa|\la|^3}e^{i\la t}\dd\la\bigg] \varphi(|\dd_x \Phi|t)\dd t,\\
\varphi(t):=&C\int_0^t h_0(-s)\dd s,\ \vec\Theta_0:=\dd_x \Phi/|\dd_x \Phi|,
\end{split}
\ee
where $C$ is defined in \eqref{last sum lim3}. Denote
\be\label{AAtilde}
\tilde R(\la):=\big[\tilde D_0(0_2,\vec\Theta_0)+\kappa|\la|^3\big]^{-1},\ 
R:=\CF^{-1}\tilde R.
\ee
Since $\tilde R(\la)$ is $C^2$ near $\la=0$ (and smooth otherwise), $R(t)$ is absolutely integrable. Combine \eqref{last sum lim3}, \eqref{rhs lim 3}, and \eqref{AAtilde} to obtain
\be\bs\label{rhs lim 4}
&\Delta \G_0 (\chx)=C\int_{\br} (R(r-t)-R(-t)) \int_{-\infty}^{|\dd_x\Phi|t} h_0(-s)\dd s\dd t,\ r:=\vec\Theta_0\cdot \chx.
\end{split}
\ee
We have extended the integral with respect to $s$ to $(-\infty,|\dd_x\Phi|t)$ because $\int (R(r-t)-R(-t))\dd t\equiv0$ for all $r$. Transforming \eqref{rhs lim 4} further gives
\be\bs\label{rhs lim 4 v2}
&\Delta \G_0 (\chx)=(C/|\dd_x \Phi|)\int_{\br} h_0(s)\int_{s}^{s+\dd_x\Phi\cdot\chx}R(t/|\dd_x \Phi|)\dd t\dd s.
\end{split}
\ee

For convenience, we collect here all the constants that will be used in subsequent calculations.
\begin{alignat}{2}\label{tilde D v2}
&\tilde D(0_2,\dd_x\Phi)=2\pi\frac{W^2|\dd_x\Phi|}{|\Delta_\Phi|} \quad&&\text{(by \eqref{pxi  v2}, \eqref{R*R D0})}\\
&K(x_0,\al_0)=|\dd_x\Phi|W \quad&&\text{(by \eqref{K_1 K})}\\
&Q_{\al\al}^{\prime\prime}(x_0,\al_0)=\frac{|\Delta_\Phi||x_\al^\prime|}{|\dd_x\Phi|}\quad&&\text{(by \eqref{Q 2nd der}, \eqref{Del Phi res})}\\
&\psi(y_0)=\frac{|\dd_x H|}{|\dd_x\Phi|}\quad&&\text{(by \eqref{psi val})}\\
\label{big C}
&C=2\pi\Delta f\frac{W}{|\Delta_{\Phi}|}\quad&&\text{(by \eqref{last sum lim3}, \eqref{g-lead-sing}, \eqref{Rf-lim-coefs}, \eqref{Del Phi res})}.
\end{alignat}
For simplicity, we dropped all the arguments on the right.

Let $\bar h_0$ be defined the same way as $h_0$ in \eqref{G0H0}, but without the factor $K(x_0,\al_0)$. By \eqref{AAtilde} and \eqref{rhs lim 4 v2}--\eqref{big C}:
\be\bs\label{rhs lim 5}
&\Delta \G_0 (\chx)=\Delta f \Upsilon\big(\dd_x\Phi\cdot\chx\big),\\
&\Upsilon(r)=2\pi\frac{W^2}{|\Delta_\Phi|}\int_{\br} \bar h_0(u)\int_{u}^{u+r}\CF^{-1}\bigg(\frac1{ 2\pi\frac{W^2|\dd_x \Phi|}{|\Delta_\Phi|}+\kappa|\la|^3}\bigg)(t/|\dd_x \Phi|)\dd t\dd u.
\end{split}
\ee 
A simple transformation yields:
\be\bs\label{upsilon simple}
\Upsilon(r)=&\frac1{|\dd_x \Phi|}\int_{\br} \bar h_0(u)\int_{u}^{u+r}\CF^{-1}\bigg(\frac1{1+\kappa C_1|\la|^3}\bigg)(t/|\dd_x \Phi|)\dd t\dd u,\\
C_1=&\frac{|\Delta_\Phi|}{2\pi W^2|\dd_x \Phi|}.
\end{split}
\ee 
Further simple transformations prove \eqref{DTB eq}, \eqref{DTB aux}. The rest of the claims follow from the next lemma, which is proven in Appendix~\ref{sec:prf Ups props}.

\begin{lemma}\label{lem:Ups props} One has $\Upsilon(0)=0$ and $\Upsilon(\pm\infty)=\pm1/2$.
\end{lemma}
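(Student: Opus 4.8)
The plan is to work directly with the formula for $\Upsilon$ in \eqref{upsilon simple} (equivalently \eqref{DTB aux}). Write $\Upsilon(r) = \int_\br \bar h_0(u) \Xi(u,r)\, \dd u$, where $\Xi(u,r) = |\Phi_x'|^{-1}\int_u^{u+r} R_0(t/|\Phi_x'|)\,\dd t$ and $R_0 = \CF^{-1}\big([1+\kappa C_1|\la|^3]^{-1}\big)$. The key analytic facts I would establish first about the kernel $R_0$ are: (i) $\int_\br R_0(t)\,\dd t = \big[1+\kappa C_1|\la|^3\big]^{-1}\big|_{\la=0} = 1$ (the value of the symbol at the origin, using the normalization $\CF^{-1}(\hat\phi)(t) = \int \hat\phi(\la)e^{-it\la}\dd\la$ up to the convention in the paper); (ii) $R_0$ is even, since the symbol is even in $\la$; and (iii) $R_0 \in L^1(\br)$, which already follows from the remark after \eqref{AAtilde} that the symbol is $C^2$ near $\la = 0$ together with its cubic decay at infinity. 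Change variables to absorb $|\Phi_x'|$ so that effectively $\int_\br R(t)\,\dd t = 1$ with $R$ even.

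For $\Upsilon(0) = 0$: when $r = 0$ the inner integral $\int_u^{u+r} R(t)\,\dd t$ vanishes identically in $u$, hence $\Xi(u,0)\equiv 0$ and $\Upsilon(0) = 0$ trivially.

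For $\Upsilon(\pm\infty) = \pm 1/2$: I would take the limit $r\to+\infty$ inside the $u$-integral, justified by dominated convergence. The dominating function comes from the bound $\big|\int_u^{u+r} R(t)\,\dd t\big| \le \|R\|_{L^1}$ uniformly in $r$ and $u$, while $\bar h_0 \in L^1(\br)$ — this integrability of $\bar h_0$ follows from Assumption~\ref{ass:interp ker}\eqref{ikcont} (both $\ik_\al,\ik_p \in C_0^2$, hence the convolution-type integral defining $\bar h_0$ in \eqref{G0H0}/\eqref{DTB aux} is compactly supported and bounded). As $r\to+\infty$, $\int_u^{u+r} R(t)\,\dd t \to \int_u^\infty R(t)\,\dd t$ pointwise in $u$. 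Therefore
\be
\Upsilon(+\infty) = \int_\br \bar h_0(u) \Big(\int_u^\infty R(t)\,\dd t\Big)\dd u.
\ee
Now swap the order of integration (Fubini, valid since $\bar h_0 \in L^1$ and $R \in L^1$ on the region $\{t \ge u\}$ after noting $\bar h_0$ has compact support): this gives $\int_\br R(t)\big(\int_{-\infty}^t \bar h_0(u)\,\dd u\big)\dd t$. I still need the normalization of $\bar h_0$. From \eqref{DTB aux}, $\bar h_0(u) = \int_\br \ik_\al(s)\ik_p(\mu s\,\Phi_\al'(0) + u)\,\dd s$; integrating in $u$ and using \eqref{norm-deriv} (each $\ik_*$ integrates to $1$) gives $\int_\br \bar h_0(u)\,\dd u = 1$. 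Combined with $\bar h_0$ even — which holds because each $\ik_*$ is even by Assumption~\ref{ass:interp ker}\eqref{ikeven}, so the substitution $s\to -s$, $u \to -u$ leaves $\bar h_0$ invariant — we get that $\bar h_0$ is an even probability density. Writing $\Upsilon(+\infty) = \int_\br R(t)\, \Psi(t)\,\dd t$ with $\Psi(t) = \int_{-\infty}^t \bar h_0$, one checks $\Psi(t) + \Psi(-t) = \int_\br \bar h_0 = 1$ by evenness of $\bar h_0$, and similarly $R$ even gives $\int R(t)\Psi(t)\dd t = \int R(t)\Psi(-t)\dd t = \int R(t)(1 - \Psi(t))\dd t = 1 - \Upsilon(+\infty)$ (using $\int R = 1$). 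Hence $\Upsilon(+\infty) = 1/2$. The case $r\to -\infty$ is symmetric (or follows from the antisymmetry $\Upsilon(-r)$ relating to $-\Upsilon(r)$ after the same manipulations), giving $\Upsilon(-\infty) = -1/2$.

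The main obstacle is purely a matter of bookkeeping rather than depth: carefully tracking the Fourier-transform normalization convention used in the paper so that "symbol value at $\la=0$" really equals $\int R$, and confirming the two normalizations $\int R = 1$ and $\int \bar h_0 = 1$ with the right constants (the factors $|\Phi_x'|$, $W^2/|\Delta_\Phi|$, etc., must cancel — and indeed the passage from \eqref{rhs lim 5} to \eqref{upsilon simple} is exactly the rearrangement that makes the symbol take the value $1$ at the origin). The only genuine analytic inputs are the $L^1$ bound on $R$ and the compact support / boundedness of $\bar h_0$, both already available, plus one application each of dominated convergence and Fubini.
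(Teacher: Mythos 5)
Your proposal is correct and rests on exactly the same ingredients as the paper's proof: the normalizations $\int_\br \bar h_0(u)\,\dd u=1$ (from \eqref{norm-deriv}) and $\int_\br R(t)\,\dd t=1$ (symbol value at $\la=0$), plus the evenness of $\bar h_0$ (from Assumption~\ref{ass:interp ker}\eqref{ikeven}) and of $R$. The only cosmetic difference is that the paper evaluates the sum and difference $\Upsilon(+\infty)\pm\Upsilon(-\infty)$ (the sum vanishing by oddness of the signum factor), whereas you compute each limit directly via dominated convergence, Fubini, and the symmetry $\Psi(t)+\Psi(-t)=1$ of the cumulative distribution of $\bar h_0$ — an equivalent repackaging of the same symmetry argument.
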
 

Truncated-view artifacts are reasonably low. This will likely allow for successful image segmentation and motion estimation.

\section{Redundant data. Proof of Theorem~\ref{thm:main res mult}}\label{sec:red dat}

Suppose the GRT data are redundant and there are multiple points $y_l$ from which the singularity at $(x_0,\xi_0)$ is visible. This means that for all $(x,\xi)$ in a conic neighborhood of $(x_0,\xi_0)$ there are multiple local solutions $\nu_l(x,\xi)$ and $\al_l(x,\xi)$ to the equation $\xi=-\nu\dd_x\Phi(x,\al)$. The local uniqueness and smoothness of the solutions follows from Assumption~\ref{geom GRT}\eqref{li}. By Assumption~\ref{ass:f props}\eqref{del curv}, their number is uniformly bounded. Set $p_l(x,\xi)=\Phi(x,\al_l(x,\xi))$ and $y_l=(\al_l,p_l)$. 
\color{black}
The principal symbol of $\R^*\R$ is given by (cf. \eqref{R*R pr symb})
\be\label{R*R pr symb mult}\bs
\tsum_l\frac{2\pi}{|\nu_l|} \frac{W^2(x,y_l)}{|\Delta_\Phi(x,\al)|},\ y_l=y(x,\xi),\nu_l=\nu(x,\xi).
\end{split}
\ee
Similarly to \eqref{R*R D0}, define
\be\label{R*R D0 mult}\bs
\tilde D_0(x,\xi):=& 2\pi\tsum_l\frac{|\xi|}{|\nu_l|} \frac{W^2(x,y_l)}{|\Delta_\Phi(x,\al_l)|},\ y_l=y_l(x,\xi),\nu_l=\nu_l(x,\xi),\\
(x,\xi)\in &\, \us\times(\br^2\setminus 0_2).
\end{split}
\ee
\color{black}
The analog of \eqref{lead sing fe} becomes
\be\bs\label{tilde B mult}
&\G_\e=\op\big(\big[\textstyle\sum_l \tilde D_l(x,\xi/|\xi|)+\kappa|\e\xi|^3\big]^{-1}\big)F_\e,\ x\in\us_0,\\
&\tilde D_l(x_0,\vec\Theta_0):= 2\pi\frac{W^2(x_0,y_l)|\dd_x \Phi(x_0,\al_l)|}{|\Delta_\Phi(x_0,\al_l)|}.
\end{split}
\ee
Here we have used that 
\be\label{theta_0}
\vec\Theta_0=\xi_0/|\xi_0|=\dd_x H(x_0)/|\dd_x H(x_0)|=\dd_x \Phi(x_0,\al_l)/|\dd_x \Phi(x_0,\al_l)|
\ee
for all $l$, cf. the convention about the directions of $\dd_x H$ and $\dd_x\Phi$ stated above Assumption~\ref{ass:f props}. Therefore, we get from \eqref{AAtilde} that the analog of the function $R$ is
\be\label{AAtilde mult}
\tilde R(\la):=\big[\tsum_l \tilde D_l(x_0,\vec\Theta_0)+\kappa|\la|^3\big]^{-1},\ 
R:=\CF^{-1}\tilde R.
\ee
Using \eqref{g-lead-sing} and \eqref{ker B}, we see that $\Delta \G_0 (\chx)$ equals to the sum of the terms analogous to \eqref{rhs lim 5}:
\be\bs\label{rhs lim 5 mult}
&\Delta \G_0 (\chx)=\Delta f \textstyle\sum_l\Upsilon_l\big(\dd_x \Phi(x_0,\al_l)\cdot\chx\big),\\
&\Upsilon_l(r)=2\pi\frac{W^2(x_0,y_l)}{|\Delta_\Phi(x_0,\al_l)|}\int_{\br} \bar h_l(u)\\
&\hspace{1cm}\times\int_{u}^{u+r}\CF^{-1}\left(\big[\textstyle\sum_l \tilde D_l(x_0,\vec\Theta_0)+\kappa|\la|^3\big]^{-1}\right)\bigg(\frac{t}{|\dd_x \Phi(x_0,\al_l)|}\bigg)\dd t\dd u,\\ 
&\bar h_l(u):=\int_\br \ik_\al(s)\ik_p(\mu s\Phi_{\al}^\prime(x_0,\al_l)+u)\dd s.
\end{split}
\ee 
Simplifying we get similarly to \eqref{rhs lim 5}, \eqref{upsilon simple}
\be\bs\label{upsilon simple mult}
&\Delta \G_0 (\chx)=\Delta f  \frac{\tsum_l\nu_l\Upsilon_l\big(\dd_x \Phi(x_0,\al_l)\cdot\chx/|\dd_x \Phi(x_0,\al_l)|\big)}{\sum_l\nu_l},\\
&\Upsilon_l(r)=\int_{\br} \bar h_0(u)\int_{u_l}^{u_l+r}\CF^{-1}\bigg(\frac1{1+\kappa C_1|\la|^3}\bigg)(t)\dd t\dd u,\\
&\nu_l:=\frac{ W^2(x_0,y_l)|\dd_x \Phi(x_0,\al_l)|}{|\Delta_\Phi(x_0,\al_l)|},\ u_l:=\frac u{|\dd_x \Phi(x_0,\al_l)|},\ C_1=\frac{1}{2\pi\tsum_l\nu_l}.
\end{split}
\ee 
Using \eqref{theta_0} finishes the proof.

\section{Numerical experiment}\label{sec:numerics}

We choose $f$ to be the characteristic function of the disk centered at $x_c=(1,1)$ with radius $r=2$. Thus, $\s=\{x\in\br^2:|x-x_c|=r\}$. The center of a local region of interest (ROI), i.e. the point on the boundary $x_0\in\s$, is $x_0=x_c+r\vec\bt_0$, where $\bt_0=-0.17\pi$. The GRT integrates over circles $\s_y$, $y=(\al,\rho)$, with various radii $\rho>0$ and centers $R\vec\al$, $\al\in[0,2\pi)$, where $R=10$. The integration weight is $W\equiv 1$. Thus, $\rho=\Phi(x,\al)=|x-R\vec\al|$ and
\be\label{needed quantities}\bs
&\dd_x\Phi(x,\al)=\frac{x-R\vec\al}{|x-R\vec\al|}=:\vec\Theta,\
\Phi_\al^\prime(x,\al)=R\vec\Theta\cdot \vec\al^\perp,\\ 
&\dd_x\Phi_\al^\prime(x,\al)=-\frac R{|x-R\vec\al|}\vec\al^\perp+c\vec\Theta,\\ 
&\Delta_\Phi(x,\al)=\det \bma \vec\Theta \\ -\frac R{|x-R\vec\al|}\vec\al^\perp+c\vec\Theta \ema
=\frac {R\vec\al\cdot\vec\Theta}{|x-R\vec\al|}\not=0,\ |x|<R.
\end{split}
\ee
Here $\vec\al^\perp=(-\sin\al,\cos\al)$, and $\vec\Theta^\perp$ is defined similarly. The continuous data corresponds to $\al\in\I_\al:=[0,2\pi)\sim S^1$ and $\rho\in\I_p:=(0,2R)$. The image domain is $\us=\{|x|<R\}$. The support of the object is contained in the square reconstruction region $\us_b:=\{(x_1,x_2)\in\br^2:|x_1|< R_{rec},\ |x_2|< R_{rec}\}$, $R_{rec}=3.7$. This corresponds to the set $\us_b$ used in \eqref{min pb}. 

\color{black}
Further (cf. \eqref{DTB aux mult}),
\be\bs\label{nu12}
&\nu_l=|\Delta_\Phi(x_0,\al_l)|^{-1},\ u_l=u,\ l=1,2.
\end{split}
\ee 
This implies that the DTB function given by the fraction in \eqref{DTB eq mult} becomes
\be\label{DTB circ RT}
\frac{\tsum_l\nu_l\Upsilon_l(r)}{\tsum_l\nu_l}
=\int_{\br} \frac{\tsum_l\nu_l h_l(u)}{\tsum_l\nu_l} \int_u^{u+r}R(t)\dd t\dd u.
\ee

Let us now make sure that the circular GRT used in this section satisfies  Assumptions~\ref{ass:Phi} and \ref{geom GRT}. Clearly, Assumption~\ref{ass:Phi} is satisfied. We next check Assumption~\ref{geom GRT}. By the first two lines in \eqref{needed quantities}, Assumption~\ref{geom GRT}\eqref{li} is satisfied since $\vec\al\cdot\vec\Theta\not=0$ when $|x|<R$. Next, suppose $|x_1-R\vec\al|=|x_2-R\vec\al|$ for some $\vec\al$ and $x_1\not= x_2$, $|x_1|,|x_2|<R$. The first line in \eqref{needed quantities} implies $\Phi_\al^\prime(x_1,\al)\not=\Phi_\al^\prime(x_2,\al)$, therefore Assumption~\ref{geom GRT}\eqref{bolk} is satisfied. 

For any $(x_0,\vec\Theta_0)$, $|x_0|<R$, there are two circles $\s_y$ in the data, which contain $x_0$ and are orthogonal to $\vec\Theta_0$ at $x_0$. Thus, the conormal bundles of all the $\s_y$ cover $T^*\us_b$ and Assumption~\ref{geom GRT}\eqref{visib} is satisfied. Since $W\equiv 1$, Assumption~\ref{geom GRT}\eqref{pos W} trivially holds. 

By \cite[Corollary 5]{AmbK05}, $\R$ is injective. As stated at the end of Section~\ref{ssec:grt}, together with the visibility of every singularity and the fact that $\R$ and $\R^*$ are FIOs of order $-1/2$, this ensures that Assumption~\ref{geom GRT}\eqref{invert} is satisfied.
\color{black}

The discrete data are given at the points
\be\label{d data}\bs
&\al_{j_1}=\Delta_\al j_1,\ \Delta\al=2\pi/N_\al,\ 0\le j_1<N_\al,\ N_\al=300,\\
&\rho_{j_2}=\rho_{\min}+j_2 \Delta\rho,\ \e:=\Delta_\rho=(\rho_{\max}-\rho_{\min})/(N_\rho-1),\ 
0\le j_2<N_\rho,\\
&\rho_{\min}=R- R_{rec}\sqrt2,\ \rho_{\max}=R+R_{rec}\sqrt2,\ N_\rho=451.
\end{split}
\ee

As mentioned above, for each $x\in\s$ there are two points, $y_l=(\al_l,\rho_l)$, $l=1,2$, such that $\s_{y_l}$ is tangent to $\s$ at $x$. To find $y_{1,2}$ for the selected $x_0$ we first solve $\Vert x_c+t\vec\bt_0\Vert=R$. This gives two values $t_1,t_2$, $t_2<0<t_1$. Then $\al_l$ are determined from $x_c+t_l\vec\bt_0=R\vec\al_l$, and $\rho_l=|x_0-R\vec\al_l|$, $l=1,2$. In view of \eqref{needed quantities}, in numerical computations we use $|x_0-R\vec\al_1|=t_1-r$ and $|x_0-R\vec\al_2|=-t_2+r$. \textcolor{black}{By a simple geometric argument (cf. \eqref{needed quantities}), 
\be
-R\vec\al_l\cdot\vec\Theta_l=(t_1-t_2)/2,\ l=1,2.
\ee}

The regularization parameter is set at $\kappa=0.5$. In order to approximate the continuous GRT $\R$ and its adjoint $\R^*$, the reconstruction is performed on a dense $801\times801$ grid covering the same square region $\us_b$, and the interpolated data $g_\e$ are assumed to be given on a dense grid of the kind \eqref{d data} with $N_\al^\prime=800$, $N_\rho^\prime=1201$. Prior to the reconstruction, the GRT data $g(y_j)$ are interpolated from the grid \eqref{d data} to the more dense grid using \eqref{interp-data}. For this we use the Keys interpolation kernel \cite{Keys1981, btu2003}
\be\label{keys}
\ik_\al(t)=\ik_\rho(t)=\ik(t)=3B_3(t+2) - (B_2(t+2) + B_2(t + 1)),
\ee
where $B_n$ is the cardinal $B$-spline of degree $n$ supported on $[0, n+1]$. Therefore $\tsp\ik=[-2,2]$. The kernel is a piecewise-cubic polynomial with continuous $\ik,\ik^\prime$ and bounded $\ik^{\prime\prime}$, so $\ik\in C_0^2(\br)$. 

The minimization of the functional \eqref{min pb} is performed using gradient descent. 
\textcolor{black}{The stopping criterion is met when the $L^\infty$-norm of the image update is less than $10^{-6}$ for three consecutive iterations. The algorithm converged in 214 iterations.}
The reconstruction results are shown in Figures~\ref{fig:global_exp=2} and \ref{fig:local_exp=2}. The reconstruction of the entire region $\us_b$ is shown in Figure~\ref{fig:global_exp=2}. A profile of the reconstruction through the center of the ball, $x_c$, is shown on the right. The location of the profile is shown in the left panel.

A small section of the reconstruction, which is located inside the square ROI shown in Figure~\ref{fig:global_exp=2}, is extracted from the global reconstruction and shown in Figure~\ref{fig:local_exp=2}. The right panel of Figure~\ref{fig:local_exp=2} shows the reconstructed (green) and predicted (blue) profiles. The latter is computed by using \eqref{needed quantities} in \eqref{DTB eq mult}, \eqref{DTB aux mult}.
The plot in Figure~\ref{fig:local_exp=2} corresponds to the right jump in Figure~\ref{fig:global_exp=2}. Overall, the match between the reconstruction and prediction is very accurate. 

\begin{figure}[h]
{\centerline{
{\hbox{
{\epsfig{file={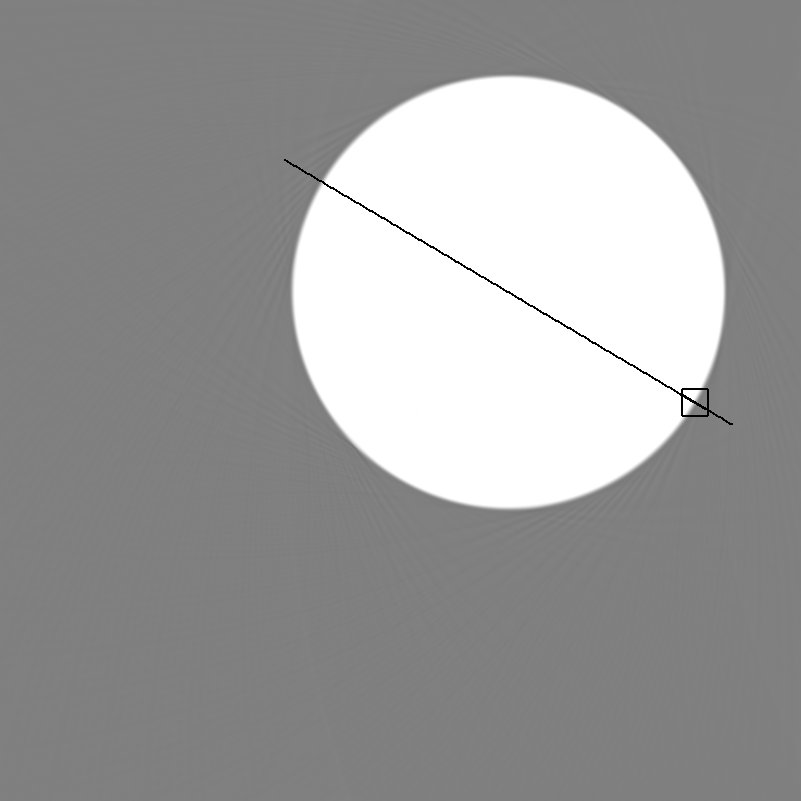}, height=5.4cm}}
{\epsfig{file={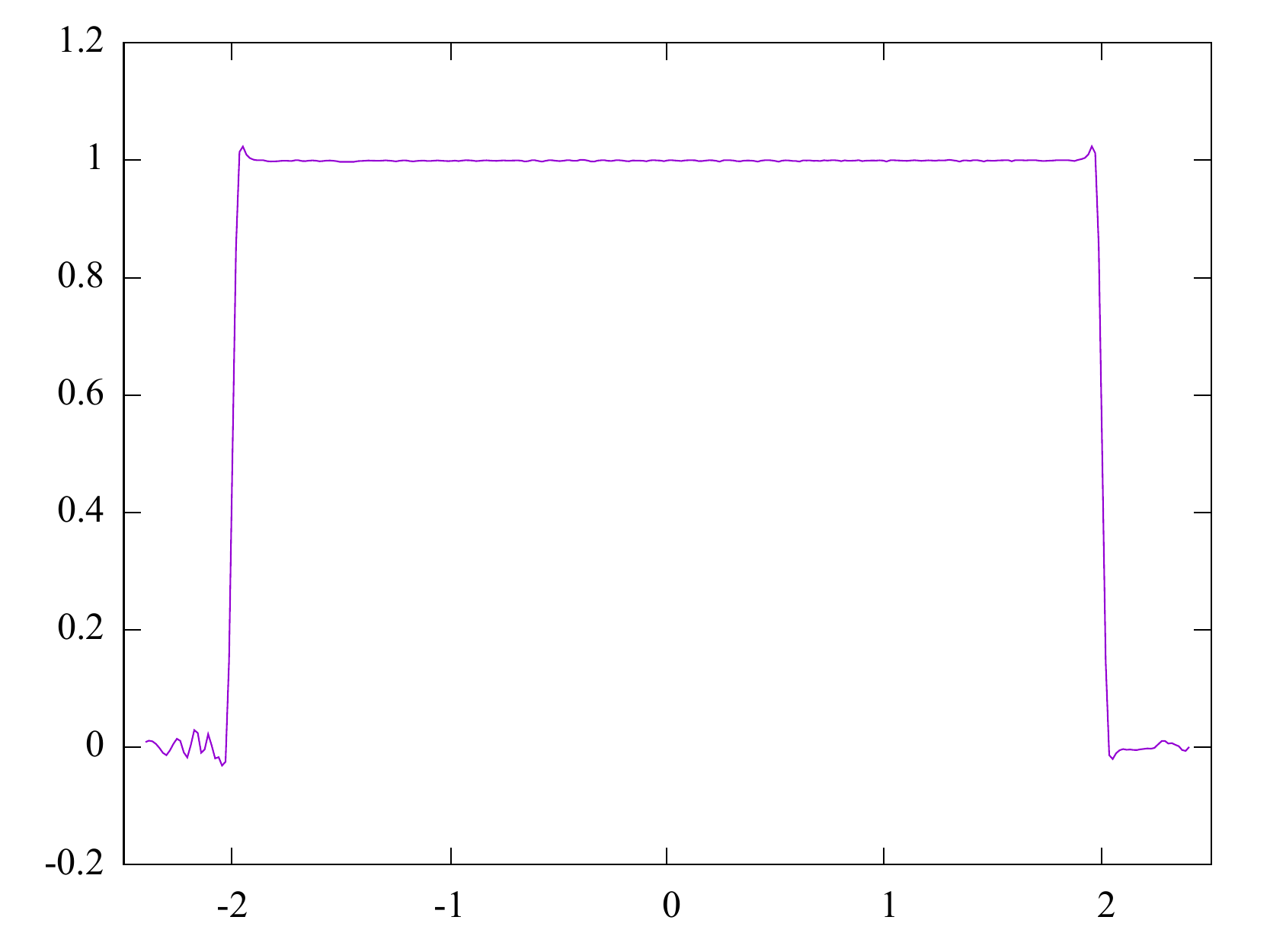}, height=5.4cm}}
}}}}
\caption{Left: global reconstruction of the disk phantom on a $801\times801$ grid. The region $\us_b=(-3.7,3.7)\times(-3.7,3.7)$ is shown. Right: profile of the reconstruction through the center of the ball, $x_c$. The $x$-axis is the distance along the profile with the origin at the center of the disk. The location of the profile is shown in the left panel.}
\label{fig:global_exp=2}
\end{figure}

\begin{figure}[h]
{\centerline{
{\hbox{
{\epsfig{file={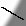}, height=2cm}}
{\epsfig{file={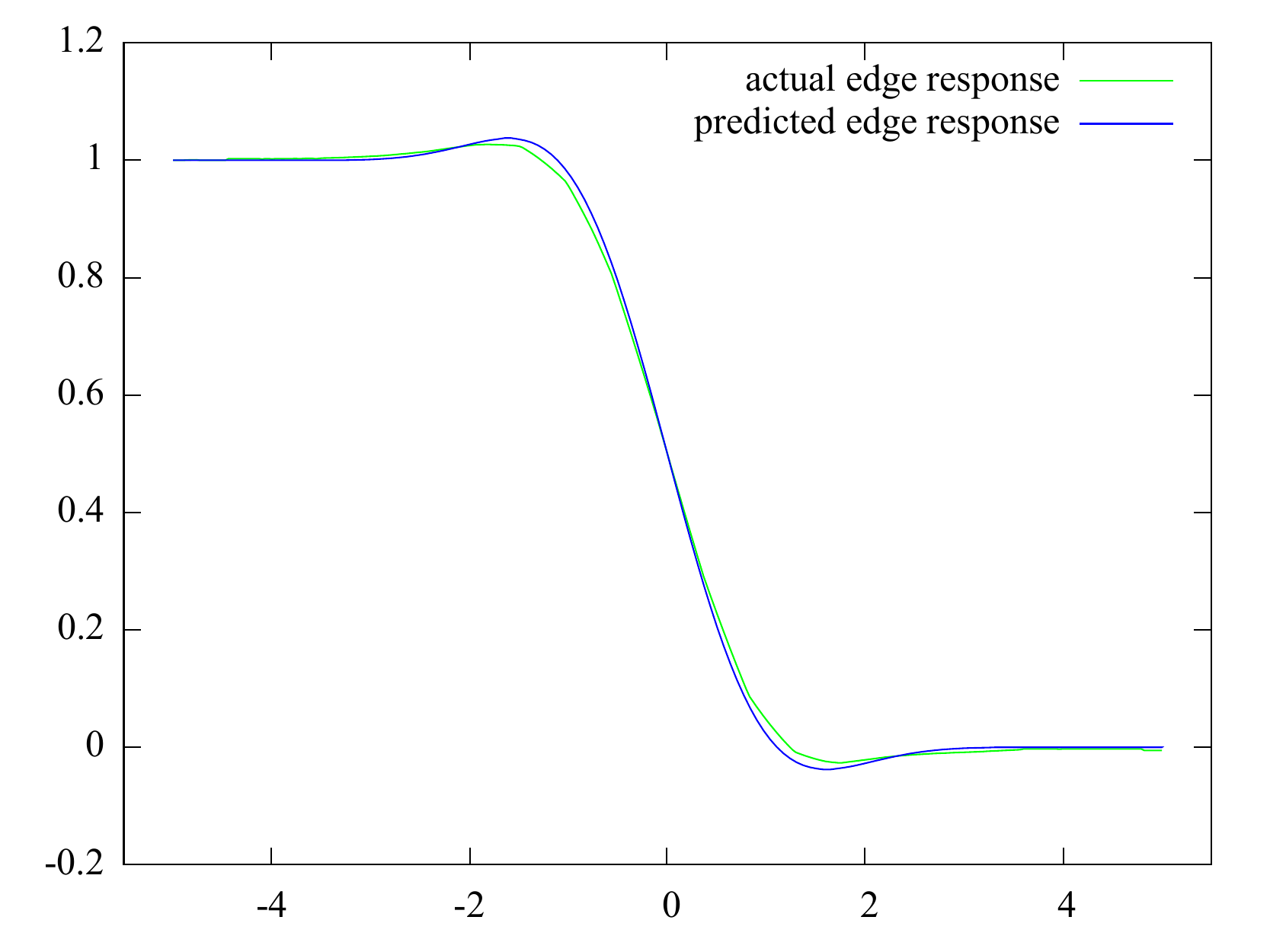}, height=5.5cm}}
}}}}
\caption{Left: local reconstruction of the disk phantom inside an ROI. The ROI is a $26\times 26$ square shown in the left panel of Figure~\ref{fig:global_exp=2}. Right: profile of the reconstruction through the center of the local ROI, $x_0$. The $x$-axis is the (signed) distance in units of $\e$ along the profile with the origin at $x_0$. The location of the profile is shown in the left panel. The numerically computed profile is in green, and the predicted profile is in blue.}
\label{fig:local_exp=2}
\end{figure}

\appendix

\section{Proof of Lemma~\ref{lem:unique sol}}\label{sec: prf lem unique sol}
\textcolor{black}{As the following shows,} $\Psi:H_0^1(\us_b)\to\br$ is strongly convex \cite[Definition 1.1.48]{Ceg2012} and \cite[Section 2.3]{Peyp2015} for each $\e>0$. \textcolor{black}{Indeed,} strong convexity means that there exists $c>0$ such that 
\be\label{strong conv}
\Psi(\la f_1+(1-\la)f_2)\le \la\Psi(f_1)+(1-\la)\Psi(f_2)-c\la(1-\la)\Vert f_2-f_1\Vert_{H^1(\us_b)}^2
\ee
for any $f_{1,2}\in H_0^1(\us_b)$ and $\la\in(0,1)$. By an easy calculation,
\be\label{ineq grad}\bs
\Vert \la\pa_x f_1+(1-\la)\pa_x f_2\Vert_{L^2(\us)}^2
\le & \la \Vert \pa_x f_1\Vert_{L^2(\us)}^2+(1-\la)\Vert \pa_x f_2\Vert_{L^2(\us)}^2\\
&-\la(1-\la)\Vert \pa_x (f_1-f_2)\Vert_{L^2(\us)}^2
\end{split}
\ee
and
\be\bs
\Vert \R(\la f_1&+(1-\la) f_2)-g_\e\Vert_{L^2(\vs)}^2\\
\le & \la \Vert \R f_1-g_\e\Vert_{L^2(\vs)}^2+(1-\la)\Vert \R f_2-g_\e \Vert_{L^2(\vs)}^2\\
&-\la(1-\la)\Vert \R (f_1-f_2)\Vert_{L^2(\vs)}^2.
\end{split}
\ee
Therefore 
\be\bs\label{interm ineq}
\Psi(\la f_1&+(1-\la) f_2)\le \la\Psi(f_1)+(1-\la)\Psi(f_2)\\
&-\la(1-\la)\big(\Vert \R (f_1-f_2)\Vert_{L^2(\vs)}^2+\Vert \pa_x (f_2-f_1)\Vert_{L^2(\us)}\big).
\end{split}
\ee
By \eqref{inverse bound},
\be\label{R-ineq}
\Vert f \Vert_{H^{-1/2}(\us)}\le c\Vert \R f \Vert_{L^2(\vs)},\ f\in\coi(\us_b).
\ee
Also
\be\label{sobolev ineq}
\Vert f \Vert_{H^1(\us)}\le c\big(\Vert f \Vert_{H^{-1/2}(\us)}+\Vert \pa_x f \Vert_{L^2(\us)}\big),\ f\in\coi(\us_b).
\ee
Applying \eqref{R-ineq} and \eqref{sobolev ineq} to \eqref{interm ineq} proves \eqref{strong conv}. Moreover, the functional is proper (its domain is not empty) and continuous $H_0^1(\us_b)\to\br$. Thus the solution to \eqref{min pb} exists and is unique for each $\e>0$ \cite[Theorem 1.3.1]{Ceg2012}, \cite[Corollary 2.20]{Peyp2015}.

\section{Proof of Lemmas~\ref{lem: matr M} and \ref{lem:conorm}}\label{sec: prf lem M conorm}

\subsection{Proof of Lemma~\ref{lem: matr M}}\label{sec: prf lem matr M}
Consider the function $H(x_*(y))$. Differentiating the first equation in \eqref{stat in x} with respect to $p$ and then using the second equation gives 
\be\label{dHdp}\bs
\pa_p H(x_*(y))=&\nu_{1*}(y)=\frac{|\dd_x H(x_*(y))|}{|\dd_x \Phi(x_*(y),\al)|}>0.
\end{split}
\ee
\textcolor{black}{The last equality follows from \eqref{two vecs} and the second equation in \eqref{stat in x}.}

Let $e$ be a unit vector tangent to $\s$ at $x_*(y)$. Denote 
\be\label{e0e matr}
e_0:=-\pa_x\Phi(x_*(y),\al)/|\pa_x\Phi(x_*(y),\al)|,\ V:=\bma e_0 & e\ema. 
\ee
Clearly, $V$ is a $2\times 2$ orthogonal matrix. Recall that $M$ is defined in \eqref{M def}. As is easily checked,
\be
M_1:=\bma 1 & 0_2^T\\ 0_2 & V^T\ema M \bma 1 & 0_2^T\\ 0_2 & V\ema
=\bma 0 & |\dd_x\Phi| & 0\\ |\dd_x\Phi|  & a & b\\ 0 & b & \dd_x^2[H-\nu_{1*}\Phi](e,e)\ema,
\ee
where $a,b$ are some quantities, whose values are irrelevant. 
Therefore, $\det M_1=\det M$ and $\text{sgn}M=\text{sgn}M_1$. 
 
Using \eqref{dHdp} and \eqref{curv S} gives
\be\label{det M prelim}\bs
\det M(y)&=-\dd_x^2[H(x_*(y))-\nu_{1*}(y)\Phi(x_*(y),\al)](e,e)|\dd_x \Phi(x,\al)|^2\\
&=(\varkappa_{\s}(x)-\varkappa_{\s_y}(x)){|\dd_x \Phi(x,\al)|^2}{|\dd_x H(x)|}.
\end{split}
\ee
Assumption~\ref{ass:f props}\eqref{del curv} completes the proof of the first statement.

To prove the second assertion in \eqref{det sign} we use Haynsworth’s inertia additivity formula \cite{Hay1968},
\be\label{sign 1}\bs
\text{sgn}M_1=&
\text{sgn}\bigg(\dd_x^2[H-\nu_{1*}\Phi](e,e)-\textcolor{black}{\bma 0 & b \ema} 
\tilde M_1^{-1} \textcolor{black}{\bma 0 \\ b \ema} \bigg)\\
&+\text{sgn}\tilde M_1,\quad \tilde M_1:=\bma 
0 & |\dd_x\Phi| \\
|\dd_x\Phi| & a \ema.
\end{split}
\ee
An easy calculation shows that \textcolor{black}{
\be\label{inert aux}
\text{sgn}\tilde M_1=0,\quad \bma 0 & b \ema 
\tilde M_1^{-1} \bma 0 \\ b \ema=b^2\big(\tilde M_1^{-1}\big)_{22}=0.
\ee}
Therefore, arguing as in \eqref{det M prelim} and using Assumption~\ref{ass:f props}\eqref{del curv}, we conclude that 
\be\label{sign 2}\bs
\text{sgn}M_1=&\text{sgn}\big(\dd_x^2[H-\nu_{1*}\Phi](e,e)\big)
=-1.
\end{split}
\ee

\subsection{Proof of Lemma~\ref{lem:conorm}}\label{sec:prf conorm}

Eqs. \eqref{GRT-f-alt}, \eqref{Rf-lim-coefs} imply that $\hat f=\R f\in I^{-3/2}(\vs,\Gamma)$ is a conormal (or, more generally, Lagrangian) distribution, see \cite[Section 25.1]{hor4}. By \textcolor{black}{\cite[Lemma 2.1 and its proof]{GW23} with $\mu=-3/2$ and $k=1$, 
$g=\chi_V\hat f\in C_0^{1/2}(V)$.} 
The leading order term in \eqref{g-lead-sing} is obtained using \cite[eq. 26, p. 360]{gs}:
\be
\CF((t-i0)^{-\sigma})=\frac{2\pi e(\sigma)}{\Gamma(\sigma)} \mu_+^{\sigma-1},\ \sigma\in\br,
\ee
\textcolor{black}{where $\mu$ is the Fourier transform variable.}

Using the decay of the lower order terms in $\fs$ as $\la\to\infty$ (namely, $\tilde R$ in \eqref{Rf-lim-coefs}), gives $\Delta g\in C_0^{3/2}(V)$. \textcolor{black}{Here we apply \cite[Lemma 2.1 and its proof]{GW23} with $\mu=-3/2$ and $k=1$ to $\pa_p\Delta g$ and $\pa_\al\Delta g$ and then use the first line in \eqref{holder} with $s=3/2$ and $k=1$.}

It remains to prove the properties of the interpolated functions $g_\e$ and $\Delta g_\e$. Pick any $g\in C_0^s(V)$. Let $\ik\in C_0^{\lceil s \rceil}(\br^2)$ be an interpolation kernel exact to degree $1$, and $g_\e$ be the corresponding interpolated function. Denote $y_h:=y+h$. We assume here for simplicity that $y_j=\e j$, $j\in\mathbb Z^2$. 

Suppose first $0<s<1$. If $|h|>\e$, it suffices to prove that $|g_\e(y)-g(y)|\le c \e^s$. Then $|g_\e(y_h)-g_\e(y)|\le c |h|^s$ follows from the triangle inequality and the assumption $g\in C_0^s(V)$. By the exactness of $\ik$,
\be\label{delta ge g}\bs
g_\e(y)-g(y)=\sum_j \ik\bigg(\frac{y-\e j}\e\bigg)\big(g(\e j)-g(y)\big).
\end{split}
\ee
Since $\ik$ is compactly supported, $|\e j-y|\le c\e$ and the number of terms in the sum is finite, and the desired assertion follows.

If $|h|<\e$, we have
\be\label{delta fs}\bs
&g_\e(y_h)-g_\e(y)
=\sum_j \bigg[\ik\bigg(\frac{y_h-\e j}\e\bigg)-\ik\bigg(\frac{y-\e j}\e\bigg)\bigg]\big(g(\e j)-g(y)\big).
\end{split}
\ee
By assumption, $\ik$ is at least as smooth as $g$, so
\be\label{delta fs aux}\bs
\bigg|\frac{\ik\big((y_h-\e j)/\e\big)-\ik\big((y-\e j)/\e\big)}{(|h|/\e)^s}\frac{g(\e j)-g(y)}{\e^s}\bigg|<c.
\end{split}
\ee
Hence $|g_\e(y_h)-g_\e(y)|\le c |h|^s$. The fact that $\Vert g_\e\Vert_{L^\infty(V)}<c$ is obvious. 

Suppose now $g\in C_0^s(V)$, $1<s<2$, i.e., we study $\Delta g$ of Lemma~\ref{lem:conorm}. As above, $\Vert g_\e\Vert_{L^\infty(V)}<c$. By the exactness of $\ik$ up to order one, differentiation of \eqref{interp-data} gives 
\be\label{ge diff}\bs
&g_\e^\prime(y)=g^\prime(y)+\sum_j \ik^\prime\bigg(\frac{y-\e j}\e\bigg)\frac{g(\e j)-\big[g(y)+\dd_y g(y)\cdot(\e j-y)\big]}\e,
\end{split}
\ee
where all the primes stand for the same derivative $\pa_{y_l}$ for some $l=1,2,\dots n$. The fraction on the right is $O(\e^{s-1})$, therefore $\Vert g_\e\Vert_{C^1(V)}<c$, $0<\e\ll1$.

To prove $\Vert g_\e^\prime\Vert_{C^{s-1}(V)}<c$, we argue similarly to the case $s<1$. As before, if $|h|\ge\e$, we only need to show that $|g_\e^\prime(y)-g^\prime(y)|\le c\e^{s-1}$. The latter inequality immediately follows from \eqref{ge diff}.

If $|h|<\e$, from \eqref{ge diff} and the exactness of $\ik$:
\be\label{delta fs prime}\bs
&g_\e^\prime(y_h)-g_\e^\prime(y)\\
&=\sum_j \bigg[\ik^\prime\bigg(\frac{y_h-\e j}\e\bigg)-\ik^\prime\bigg(\frac{y-\e j}\e\bigg)\bigg]\frac{g(\e j)-[g(y)+\dd_y g(y)\cdot(\e j-y)]}\e.
\end{split}
\ee
Dividing by $|h|^{s-1}=(|h|/\e)^{s-1}\e^{s-1}$ we get similarly to \eqref{delta fs aux} that the ratio is bounded. The proof is complete.

\section{Proof of Lemma~\ref{lem:femge}}\label{sec:prf lem femge}

The idea of the proof is to reduce $f_\e$ to $\G_\e$ by first neglecting smooth terms in $f_\e$ and then by neglecting lower order terms of finite smoothness, which still do not contribute to the DTB. This is done in Sections~\ref{ssec:appl param} and \ref{ssec:lead loc sing}, respectively.

\subsection{Simplification of $f_\e$ by subtracting smooth terms}\label{ssec:appl param}

Let $\T_\e$ denote the operator on the left in \eqref{eqn checks oper 2}. By Assumption~\ref{geom GRT}, $\R^*\R\in L^{-1}(\us)$ is elliptic. Clearly, 
\be
\T_\e=\op(\chi(x)|\xi|)\chi(x)\op\big(\tilde D(x,\xi)+\kappa\e^3|\xi|^2\big)\in L^3(\us). 
\ee
Recall that $\chi\in\coi(\us_b)$. Let $\CB_\e\in L^{-3}(\us_b)$ be a local parametrix for $\T_\e$ on the open set $\us_{1/2}:=\{x\in\us_b:\chi(x)>1/2\}\Supset\us_0$, see Figure~\ref{fig:Us}. Below, we omit the part “$x\in\us_b$” in similar sets. Then
\be\label{almost soln}\bs
&f_\e(x)=(\CB_\e F_\e)(x)+(\CH_\e f_\e)(x),\\ 
&\tilde B_\e(x,\xi)- \chi_1(\xi)\big(|\xi|\tilde D(x,\xi)+\kappa|\e\xi|^3\big)^{-1}\in S^{-1}(\us_{1/2}),
\end{split}
\ee
where 
\begin{enumerate}
\item $\tilde B_\e\in S^{-3}(\us_b)$ is the complete symbol of $\CB_\e$, i.e. $\CB_\e=\op(\tilde B_\e)$,
\item $\CH_\e: \CE^\prime(\us_b)\to C^\infty(\us_0)$ is an operator with a smooth kernel, $H_\e(x,z)\in C^\infty(\us_0\times\us_b)$, and 
\item $\chi_1\in C^\infty(\br^2)$ satisfies $\chi_1(\xi)\equiv0$, $|\xi|\le R$, and $\chi_1(\xi)\equiv 1$, $|\xi|\ge R+1$, for some $R\gg1$. 
\end{enumerate}

\begin{figure}[h]
{\centerline{
{\epsfig{file={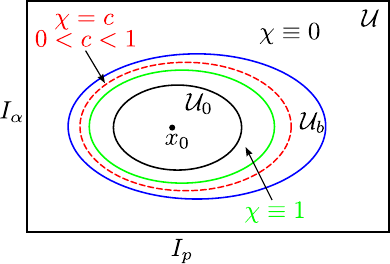}, height=4.5cm}}
}}
\caption{The sets $\us_0$, $\us_b$, $\us$, $\{\chi(x)=1\}$, and the level set $\{\chi(x)=c\}$, $0<c<1$.}
\label{fig:Us}
\end{figure}

To prove \eqref{almost soln}, use a partition of unity to write $f_\e=f_\e^{(1)}+f_\e^{(2)}$, where 
\be
\tsp f_\e^{(1)}\subset \us_{1/2}=\{\chi(x)>1/2\},\quad \tsp f_\e^{(2)}\subset \{\chi(x)<3/4\}.
\ee
Then $\CB_\e \T_\e f_\e=f_\e^{(1)}+\CH_\e^{\prime}f_\e^{(1)}+\CB_\e \T_\e f_\e^{(2)}$, where $\CH_\e^\prime\in L^{-\infty}(\us_{1/2})$. The first two terms on the right follow by the properties of a parametrix, \cite[Section I.4, Appendix]{trev1}. By construction, $\text{dist}(\{\chi(x)<3/4\},\us_0)>0$, and the property 
\be
\CB_\e \T_\e: \CE^\prime(\{\chi(x)<3/4\})\to C^\infty(\us_0)
\ee
follows by the pseudolocality of $\Psi$DOs.

The derivatives $\pa_x^{m_1}\pa_z^{m_2}H_\e(x,z)$, $m_1,m_2\in\N_0^2$, are uniformly bounded with respect to $x\in\us_0$, $z\in\us_b$, and $0<\e\ll1$. This follows from the above discussion, the way a symbol of $\CB_\e$ is constructed (see \cite[Ch. I, eqs. (4.39), (4.40)]{trev1}), and the observation that all the seminorms of $\chi_1(\xi)\big(|\xi|\tilde D(x,\xi)+\kappa|\e\xi|^3\big)^{-1}$ as member of $S^0(\us_{1/2})$ (i.e., the minimal constants $c_m$ and $c_{m_1,m_2}$ in \eqref{symbols def}, where $r=0$) are uniformly bounded for $0<\e\ll1$. Likewise, the essence of the second line in \eqref{almost soln} is that all the seminorms of the left-hand side as member of $S^{-1}(\us_{1/2})$ are uniformly bounded for $0<\e\ll1$ as well.

Therefore, by Lemma~\ref{lem:bdd sol}, 
\be\label{Hf bnd}
\CH_\e f_\e\in C^\infty(\us_0),\
\Vert \CH_\e f_\e \Vert_{C^1(\us_0)}< c,\ 0<\e\ll 1.
\ee

\subsection{Extracting the leading local singularity of $f_\e$}\label{ssec:lead loc sing}

In the preceding section we converted $f_\e$ to $\CB_\e F_\e$ modulo a $C^\infty(\us_0)$ function. We now represent $\CB_\e=\CB_\e^{(0)}+\Delta\CB_\e$ with some convenient $\CB_\e^{(0)}\in L^0(\us_b)$, $\Delta\CB_\e\in L^{-1}(\us_b)$. Then we use the mapping properties of $\Delta\CB_\e$ to show that $\Delta\CB_\e F_\e$ does not contribute to the DTB.

Using the results in \cite[Chapter VIII, Section 6.2]{trev2} and \eqref{Delta det prop}, the principal symbol of $\R^*\R$ is given by
\be\label{R*R pr symb}\bs
\frac{2\pi}{|\nu|} \frac{W^2(x,y)}{|\Delta_\Phi(x,\al)|},\ y=y(x,\xi),\nu=\nu(x,\xi),\ 
(x,\xi)\in \us\times(\br^2\setminus 0_2),
\end{split}
\ee
where the functions $y(x,\xi)\in V$ and $\nu(x,\xi)\in\br$ are obtained by solving the equations:
\be\label{pxi v2}
p=\Phi(x,\al),\ \xi=-\nu\dd_x\Phi(x,\al),\ (\al,p)\in V.
\ee
The existence of a local solution follows from Assumption~\ref{geom GRT}\eqref{visib}. Its local uniqueness and smoothness follows from Assumption~\ref{geom GRT}\eqref{li}. \textcolor{black}{In Theorem~\ref{thm:main res} we assume that the GRT data are not redundant, so only one solution exists. Note that the symbol in \eqref{R*R pr symb} is integrable at the origin $\xi=0$. More precisely, it satisfies the first inequality in \eqref{symbols def} with $n=2$ and $\de=1$.}

Define
\be\label{R*R D0}\bs
\tilde D_0(x,\xi):=& 2\pi\frac{|\xi|}{|\nu|} \frac{W^2(x,y)}{|\Delta_\Phi(x,\al)|},\ y=y(x,\xi),\nu=\nu(x,\xi),\\
(x,\xi)\in &\, \us\times(\br^2\setminus 0_2).
\end{split}
\ee
It is clear from \eqref{pxi v2} and \eqref{R*R D0} that
\be\label{homog 1}\bs
&\tilde D_0(x,\la\xi)=\tilde D_0(x,\xi),\ \la\not=0,x\in\us,\xi\in\br^2\setminus 0_2.
\end{split}
\ee
Recall that $\tilde D$ is the complete symbol of $\R^*\R$. From \eqref{R*R pr symb} and \eqref{R*R D0},
\be\label{tilde D}
|\xi|\tilde D(x,\xi)-\tilde D_0(x,\xi/|\xi|)\in S^{-1}(\us_0).
\ee
Thus we can find $\Delta\tilde B_\e\in S^{-1}(\us_b)$ (with all the seminorms uniformly bounded for $0<\e\ll1$) such that (see \eqref{almost soln})
\be\bs\label{tilde B}
&\tilde B_\e(x,\xi)=\big(\tilde D_0(x,\xi/|\xi|)+\kappa|\e\xi|^3\big)^{-1}+\Delta\tilde B_\e(x,\xi),\ x\in\us_0.
\end{split}
\ee

From \eqref{eqn checks oper 2},
\be\label{del b fe}\bs
&\op\big(\Delta\tilde B_\e(x,\xi)\big)F_\e=\CB_1\R^* g_\e,\\ 
&\CB_1:=\op\big(\Delta\tilde B_\e(x,\xi)\big)\op(\chi(x)|\xi|)\chi(z)\in L^0(\us).
\end{split}
\ee
\color{black}
Arguing similarly to Section~\ref{ssec:prelims} (from the beginning through \eqref{get lead term v3}), we get
\color{black}
\be\label{Tg Holder}\begin{split}
&\Vert \CB_1\R^* g_\e\Vert_{L^\infty(\us_b)}<c,0<\e\ll1;\\ 
&|(\CB_1\R^* g_\e)(x_0+\e\chx)-(\CB_1\R^* g_\e)(x_0)|=O(\e^{1/2}),\ \chx\in\mathcal K,
\end{split}
\ee 
where $\mathcal K$ is any compact set.

Combining \eqref{lead sing fe}, \eqref{almost soln}, \eqref{Hf bnd}, \eqref{tilde B} and \eqref{Tg Holder} finishes the proof.

\section{Proof of Lemmas~\ref{lem:G props}, \ref{lem:Fel bnd}}\label{sec: G Fel}

\subsection{Proof of Lemma~\ref{lem:G props}}\label{sec:prf G props}
We begin by stating a lemma. Its proof is in Appendix~\ref{sec:prf H props}.
\begin{lemma}\label{lem:H props} Suppose Assumptions~\ref{ass:Phi}--\ref{ass:interp ker} are satisfied. Pick any $A>0$. One has
\be\label{H props 1}
H(\cht,\chq;*)\equiv 0,\ \cht<-c.
\ee
Also, for some smooth and bounded $\varkappa(t,q;\theta)$,
\be\bs
\label{H props 2 a}
&\pa_{\cht}^k \big[H(\cht,\chq;*)-\varkappa(\e\cht,\e\chq;\theta)\cht^{-1/2}\big]=O(\cht^{-3/2}),\\
&k=0,1,\ \cht\to+\infty,\ \cht<A/\e,
\end{split}
\ee
and
\be\bs
\label{H props 2 b}
& \pa_{\cht} \big[\varkappa(\e\cht,\e\chq;\theta)\cht^{-1/2}\big]=O(\cht^{-3/2}),\  
\cht\to+\infty,\ \cht<A/\e.
\end{split}
\ee
The big-$O$ terms are uniform with respect to $\chq$, $\theta$, and $\e$. These variables are restricted to the sets $|\chq|\le A/\e$, $\theta\in I_\al$,  and $0<\e\ll1$.
\end{lemma}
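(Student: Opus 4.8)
\textbf{Proof plan for Lemma~\ref{lem:H props}.}
The whole argument rests on two simple observations. First, the functions $h(\,\cdot\,;\theta,\e)$ and $h^\prime(\,\cdot\,;\theta,\e)$ from \eqref{h_eps} are supported in a fixed interval $\{|u|\le c\}$ and have $C^2$-norms bounded uniformly in $\theta\in\dom P$ and $0<\e\ll1$. Indeed, $\ik_\al$ is compactly supported, and by Taylor's formula $\beta(s;\theta,\e):=\big(\Phi(\theta+\e\mu s)-\Phi(\theta)\big)/\e=\mu s\,\Phi_\al^\prime(\theta)+O(\e)$ stays in a fixed compact set for $s\in\tsp\ik_\al$, $\theta$ in a slightly enlarged copy of $\dom P$, and $\e$ small; since $\ik_p$ is compactly supported as well, $\ik_p(\beta(s;\theta,\e)+u)$ vanishes once $|u|$ is large, and $\ik_*\in C_0^2$ makes $h$ of class $C^2$ in its first argument upon differentiating under the (finite) integral. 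Granting this, \eqref{H props 1} is immediate: in \eqref{sum j} a summand is nonzero only if $|\cht+\chq-j_2|\le c$ (so $h^\prime\ne0$) and simultaneously $j_2>\chq$ (so $(j_2-\chq)_+^{1/2}\ne0$), and for $\cht\le-c$ these two conditions are incompatible, whence $H\equiv0$.

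The key ingredient for the asymptotics is a set of summation identities coming from the first-order exactness of $\ik_p$ (assumption~\ref{ass:interp ker}(3)). Writing $\bar K(\theta,\e):=\mu\int_\br K(\theta+\e\mu s)\ik_\al(s)\,\dd s$ and using $\sum_j\ik_p(v-j)\equiv1$ together with $\sum_j(j-v)\ik_p(v-j)\equiv0$ (the latter being $\sum_j j\ik_p(v-j)-v=0$), one computes that $\sum_{j_2}h(u-j_2;\theta,\e)\equiv\bar K(\theta,\e)$ and that $\sum_{j_2}(j_2-u)h(u-j_2;\theta,\e)=\mu\int K(\theta+\e\mu s)\ik_\al(s)\beta(s;\theta,\e)\,\dd s$ are both independent of $u$. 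Differentiating twice in $u$ (each sum is locally finite, so termwise differentiation is legitimate) yields
\be\label{pf:Hids}\bs
&\tsum_{j_2}h^\prime(u-j_2;\theta,\e)\equiv0,\qquad \tsum_{j_2}(j_2-u)\,h^\prime(u-j_2;\theta,\e)\equiv\bar K(\theta,\e),\\
&\tsum_{j_2}h^{\prime\prime}(u-j_2;\theta,\e)\equiv0,\qquad \tsum_{j_2}(j_2-u)\,h^{\prime\prime}(u-j_2;\theta,\e)\equiv0.
\end{split}\ee

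Now fix $\cht$ large with $|\cht|<A/\e$ and set $\sigma:=\cht+\chq$. For $\cht$ large, every $j_2$ with $h^\prime(\sigma-j_2)\ne0$ satisfies $j_2\ge\sigma-c>\chq$, so the $(\cdot)_+$ may be dropped and $H=\sum_{j_2}h^\prime(\sigma-j_2;\theta,\e)\,\phi(j_2)$ with $\phi(t):=a_0(\theta,\e t)(t-\chq)^{1/2}$, a smooth function of $t$ on the relevant window since there $t-\chq\asymp\cht$. Because $\phi(\sigma)=a_0(\theta,\e\sigma)\cht^{1/2}$ and $\sum_{j_2}h^\prime(\sigma-j_2)\equiv0$, we have $H=\sum_{j_2}h^\prime(\sigma-j_2)\big[\phi(j_2)-\phi(\sigma)\big]$; a first-order Taylor expansion of $\phi$ about $\sigma$ together with the second identity in \eqref{pf:Hids} gives $H=\phi^\prime(\sigma)\bar K(\theta,\e)+\tfrac12\sum_{j_2}h^\prime(\sigma-j_2)\phi^{\prime\prime}(\xi_{j_2})(j_2-\sigma)^2$ for suitable $\xi_{j_2}$ between $\sigma$ and $j_2$. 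Since $\phi^\prime(\sigma)=\cht^{-1/2}\big[\tfrac12 a_0(\theta,\e\sigma)+\e\cht\,(\pa_p a_0)(\theta,\e\sigma)\big]$, the main term has the form $\varkappa\,\cht^{-1/2}$ with $\varkappa=\bar K(\theta,\e)\big[\tfrac12 a_0(\theta,\e\sigma)+\e\cht\,(\pa_p a_0)(\theta,\e\sigma)\big]$, a smooth function of $\theta,\e,\e(\cht+\chq),\e\cht$ that is bounded with bounded derivatives on the parameter range that actually occurs ($|\e\cht|<A$, $a_0$ compactly supported, $\bar K$ built from the smooth $K$); and the remainder is $O(\cht^{-3/2})$ by the estimate discussed below. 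This proves the $k=0$ case of \eqref{H props 2 a}.

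For $k=1$, note $\pa_{\cht}H=\sum_{j_2}h^{\prime\prime}(\sigma-j_2;\theta,\e)\,\phi(j_2)$; subtracting the two identities in the second line of \eqref{pf:Hids} rewrites this as $\sum_{j_2}h^{\prime\prime}(\sigma-j_2)\big[\phi(j_2)-\phi(\sigma)-\phi^\prime(\sigma)(j_2-\sigma)\big]$, a finite sum of terms $O\!\big(\phi^{\prime\prime}(\xi_{j_2})\big)=O(\cht^{-3/2})$, so $\pa_{\cht}H=O(\cht^{-3/2})$. Also $\pa_{\cht}\big[\varkappa\,\cht^{-1/2}\big]=\e\,\varkappa^\prime\,\cht^{-1/2}-\tfrac12\varkappa\,\cht^{-3/2}=O(\cht^{-3/2})$ since $\varkappa,\varkappa^\prime$ are bounded and $\e<A/\cht$, which is \eqref{H props 2 b}; combining the last two facts gives the $k=1$ case of \eqref{H props 2 a}. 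The only delicate point — and the main obstacle — is the uniformity of the Taylor remainders: from
\be\label{pf:phipp}
\phi^{\prime\prime}(\xi)=\e^2(\pa_p^2 a_0)(\theta,\e\xi)(\xi-\chq)^{1/2}+\e(\pa_p a_0)(\theta,\e\xi)(\xi-\chq)^{-1/2}-\tfrac14\,a_0(\theta,\e\xi)(\xi-\chq)^{-3/2},
\ee
one must deduce $\phi^{\prime\prime}(\xi)=O(\cht^{-3/2})$ uniformly over $\chq\in\br$, $\theta\in\dom P$, and $\xi$ within distance $c$ of $\sigma$; there $\xi-\chq\asymp\cht$, so the third term is $O(\cht^{-3/2})$, while the constraint $\e<A/\cht$ turns the first two into $O(\e^2\cht^{1/2})$ and $O(\e\cht^{-1/2})$, again $O(\cht^{-3/2})$. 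Here the compact support and boundedness of all derivatives of $a_0$, and the smoothness of $K$ (hence of $\bar K$), are exactly what make every implied constant independent of $\chq,\theta,\e$; in particular, when $\e\chq$ is so large that $\e(\cht+\chq)$ leaves $\tsp a_0$, both $H$ and $\varkappa$ vanish and the estimates are trivial. All remaining steps are routine differentiation under the finite integral in \eqref{h_eps} and finite-sum manipulations.
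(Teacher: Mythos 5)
Your proof is correct and follows essentially the same route as the paper's: both arguments rest on the order-one exactness of $\ik_p$ combined with a second-order Taylor expansion of $t\mapsto a_0(\theta,\e t)(t-\chq)^{1/2}$, whose second derivative is $O(\cht^{-3/2})$ precisely because $\e|\cht|<A$; the paper packages this as an auxiliary interpolation-accuracy lemma for the raw kernel (Lemma~\ref{lem:aux asymp}) and then integrates in $s$, while you encode the same information as discrete moment identities for the smeared kernel $h$ and expand directly in the $j_2$-sum. Your explicit $\varkappa$, which depends on $\e\cht$ in addition to $\e(\cht+\chq)$, matches what the paper's own derivation produces (consistent with the paper declaring $\varkappa(t,q;\cdot)$ with two slots even though the display shows only $\e(\cht+\chq)$), and this extra dependence is harmless in the subsequent uses of the lemma since $\pa_{\cht}\varkappa=O(\e)=O(1/\cht)$ there.
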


For simplicity, we drop the arguments $\chq$, $\theta$ and $*=(x,\theta,\e)$ from all the functions. 
From \eqref{H props 1} and \eqref{H props 2 a} with $k=0$ we get $H(\cht)=O(\cht^{-1/2})$, $s\to\infty$. From Assumption~\ref{ass:interp ker}\eqref{ikcont}, \eqref{h_eps}, and \eqref{sum j}, it follows that $H(\cht,\chq;*)$ and $\pa_{\cht}H(\cht,\chq;*)$ are uniformly bounded. The limit as $\cht\to-\infty$ in \eqref{G props} easily follows from \eqref{key int}.

Next, suppose $\cht\to\infty$. Then
\be\label{key int 2 parts}\bs
&G(\cht)=\int_{\br}\chii(\e(s-\cht))\frac {H_1(s)}{s-\cht} \dd s
+\ioi\chii(\e (s-\cht))\frac {\varkappa(\e (s+\chq))}{(s-\cht)s^{1/2}} \dd s,\\
&H_1(s):=H(s)-\varkappa(\e (s+\chq)) s_+^{-1/2},
\end{split}
\ee
where $\varkappa(\cdot)$ is the same as in \eqref{H props 2 a}, \eqref{H props 2 b}. Using \eqref{H props 1}, split the first of the integrals into four
\be\label{5 subparts}\bs
&\bigg(\int_{-c}^{t/2}+\int_{t/2}^{\cht-1}+\int_{\cht-1}^{\cht+1}+\int_{\cht+1}^\infty\bigg)\chii(\e(s-\cht))\frac {H_1(s)}{s-\cht} \dd s\\
&=J_1+\dots+J_4.
\end{split}
\ee
By \eqref{H props 2 a} with $k=0$, $H_1(s)=O(s^{-3/2})$, $s\to\infty$. It is trivial to see that $J_1=O(1/\cht)$ and  $J_2=O(\cht^{-3/2}\ln\cht)$. In the first integral we use $s-\cht\asymp \cht$, and in the second one we use that $s\asymp \cht$ to bound above $H_1(s)$ by $c\cht^{-3/2}$. Breaking up the integral over $[\cht+1,\infty)$ into the integrals over $[\cht+1,2\cht]$ and $[2\cht,\infty)$ gives $J_4=O(\cht^{-3/2}\ln\cht)$. Finally, using \eqref{H props 2 a} with $k=1$ and that $|s-\cht|\le 1$, $\e\cht\le c$, we get
\be
\pa_s\big[\chii(\e(s-\cht))H_1(s)\big]=O(\cht^{-3/2}),
\ee
which implies $J_3=O(\cht^{-3/2})$. Combining the four estimates proves that the first integral in \eqref{key int 2 parts} is $O(1/\cht)$.

Consider the second integral in \eqref{key int 2 parts}, which we denote $J(\cht,\e)$. Recall that $\cht\to\infty$. Write $J(\cht,\e)$ in the form
\be\bs
J(\cht,\e):=&\e^{1/2}\ioi \frac{\chii(s-t)\varkappa(s+q)}{(s-t)s^{1/2}}\dd s\\
=&\e^{1/2}\ioi \frac{(\chii(s-t)-\chii(0))\varkappa(s+q)}{(s-t)s^{1/2}}\dd s\\
&+\e^{1/2}\chii(0)\ioi \frac{\varkappa(s+q)}{(s-t)s^{1/2}}\dd s,\ q=\e\chq,\ t=\e\cht.
\end{split}
\ee
It is not hard to show that the integral on the second line is uniformly bounded. Recall that $\chii\in\coi(\br)$. The identity \cite[Eq. 2.2.5.26]{pbm1} with $\al=1/2$:
\be
\ioi \frac{\dd s}{(s-t)s^{1/2}}=0,\ t>0,
\ee
implies
\be
\ioi \frac{\varkappa(s+q)}{(s-t)s^{1/2}}\dd s=\ioi \frac{\varkappa(s+q)-\varkappa(t+q)}{(s-t)s^{1/2}}\dd s
=O(1),
\ee
Here the term $O(1)$ is uniform with respect to $0<t<A,|q|<A$. Therefore $J(\cht,\e)=O(\e^{1/2})$. 

Given that $\pa_{\cht} H$ is bounded, it is clear that $G$ is bounded with $\cht$ in compact sets. Combining all the results proves \eqref{G props}.

\subsection{Proof of Lemma~\ref{lem:H props}}\label{sec:prf H props}
Assertion \eqref{H props 1} is obvious from \eqref{sum j} because $h$ is compactly supported. To prove \eqref{H props 2 a}, \eqref{H props 2 b}, begin by proving the following auxiliary lemma.
\begin{lemma}\label{lem:aux asymp}
Pick any $A>0$ and a function $f\in C^2(\br)$. Let $\ik\in C_0^2(\br)$ be an interpolating kernel, which is exact up to degree one. One has 
\be\label{exp accuracy}\bs
\pa_{\cht}^k\big[&\tsum_j \ik(\cht+\chq-j) f(\e j)(j-\chq)_+^{1/2}-f(\e (\cht+\chq))\cht^{1/2}\big]=O\big(\cht^{-3/2}\big),\\ 
&\cht\to+\infty,\ \e\to0,\ \e \cht<A,\ k=0,1,2.
\end{split}
\ee
The big-$O$ term is uniform in $|\chq|\le A/\e$.
\end{lemma}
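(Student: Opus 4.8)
The plan is to recognize the bracketed quantity as the error of interpolating the function $g(s):=f(\e s)(s-b)^{1/2}$ by the kernel $\ik$, and then to exploit that $\ik$ reproduces affine functions together with a quantitative bound on $g''$. Choose $L$ with $\tsp\ik\subset[-L,L]$ and set $u:=t+b$. For $t>L$ the only indices $j$ contributing to $\tsum_j\ik(t+b-j)f(\e j)(j-b)_+^{1/2}$ satisfy $j-b\ge t-L>0$, so $(j-b)_+^{1/2}=(j-b)^{1/2}$; moreover on the window $[u-L,u+L]$, which lies at distance $\ge t-L$ from the point $b$, the function $g$ is $C^2$ (its factor $(\cdot-b)^{1/2}$ is smooth there and $f\in C^2$). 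Hence the bracket equals $E(u):=\tsum_j\ik(u-j)g(j)-g(u)$, and since only the arguments $u-j$ of $\ik$ depend on $t$, applying $\pa_t^k$ gives $E^{(k)}(u)=\tsum_j\ik^{(k)}(u-j)g(j)-g^{(k)}(u)$ for $k=0,1,2$ (recall $\ik\in C_0^2$, so $\ik''$ is bounded and compactly supported).

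Next I would record the reproduction identities obtained by differentiating \eqref{exactness}. From the locally finite sums $\tsum_j\ik(u-j)\equiv1$ and $\tsum_j j\,\ik(u-j)\equiv u$ one gets $\tsum_j\ik^{(k)}(u-j)=0$ for $k=1,2$, $\tsum_j j\,\ik'(u-j)=1$, and $\tsum_j j\,\ik''(u-j)=0$. Writing $P_u(s):=g(u)+g'(u)(s-u)$ for the degree-one Taylor polynomial of $g$ at $u$ and $\rho_u:=g-P_u$, these identities give $\tsum_j\ik(u-j)P_u(j)=g(u)$, $\tsum_j\ik'(u-j)P_u(j)=g'(u)$, and $\tsum_j\ik''(u-j)P_u(j)=0$. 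Substituting $g=P_u+\rho_u$ therefore yields $E^{(k)}(u)=\tsum_j\ik^{(k)}(u-j)\rho_u(j)$ for $k=0,1$ and $E''(u)=\tsum_j\ik''(u-j)\rho_u(j)-g''(u)$.

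It then remains to estimate $\rho_u$ and $g''$. By Taylor's theorem $|\rho_u(j)|\le\tfrac12|j-u|^2\sup_{[u-L,u+L]}|g''|$ whenever $|j-u|\le L$, and at most $2L+1$ indices $j$ contribute to each sum, so $|E^{(k)}(u)|\le c\big(\sup_{[u-L,u+L]}|g''|+|g''(u)|\big)$ with $c$ depending only on $\ik$. A direct differentiation gives $g''(s)=\e^2f''(\e s)(s-b)^{1/2}+\e f'(\e s)(s-b)^{-1/2}-\tfrac14f(\e s)(s-b)^{-3/2}$; since $f\in C^2(\br)$ has globally bounded derivatives (see \eqref{ck-def}) and $s-b$ is comparable to $t$ on $[u-L,u+L]$ once $t\ge 2L$, this gives $|g''(s)|\le c\big(\e^2t^{1/2}+\e t^{-1/2}+t^{-3/2}\big)$ there, and likewise at $s=u$. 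Finally the hypothesis $\e|t|<A$, i.e. $\e<A/t$, turns $\e^2t^{1/2}$ into $\le A^2t^{-3/2}$ and $\e t^{-1/2}$ into $\le At^{-3/2}$, so $|E^{(k)}(u)|=O(t^{-3/2})$ for $k=0,1,2$ with a constant depending only on $\ik$, $A$, $\Vert f\Vert_{C^2}$, hence uniformly in $b$. The only point requiring care is the case $k=2$: because $\ik$ reproduces only affine functions, $\tsum_j\ik''(u-j)P_u(j)$ does not recover $g''(u)$, but this is harmless precisely because $g''(u)$ is itself $O(t^{-3/2})$; and the a priori dangerous term $\e^2t^{1/2}$ in $g''$ is exactly tamed by the constraint $\e|t|<A$. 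This is the main (and essentially only) subtlety of the argument.
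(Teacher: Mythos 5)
Your proof is correct and follows essentially the same route as the paper's: write the bracket as the interpolation error of $g(s)=f(\e s)(s-b)^{1/2}$, Taylor-expand at $u=t+b$, use exactness of $\ik$ up to degree one to reduce everything to the remainder (plus the surviving $-g''(u)$ when $k=2$), and bound $g''$ by $O(t^{-3/2})$ via the constraint $\e|t|<A$. The only differences are bookkeeping (you differentiate $E$ first and then split off the Taylor polynomial, while the paper differentiates the remainder sum and kills the extra terms with the same exactness identities), and you make explicit two points the paper leaves tacit, namely dropping the subscript $+$ for large $t$ and the estimate of $g''$.
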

\begin{proof}
Denote $g(\cht):=f(\e \cht)(\cht-\chq)^{1/2}$ and $u:=\cht+\chq$. The difference in brackets in \eqref{exp accuracy} becomes $g_\e(u)-g(u)$, where $g_\e$ stands for the interpolated $g$ similarly to \eqref{interp-data} (see \eqref{interp main}). Taylor expanding $g$ gives
\be\label{T exp}\bs
&g(j)=g(u)+g^\prime(u)(j-u)+R_1(j,u),\ j-u\in\tsp\ik,\\ 
&R_1(j,u)=\int_u^j g^{\prime\prime}(s)(j-s)\dd s.
\end{split}
\ee
The exactness of $\ik$ up to degree one implies:
\be \label{interp main}
g_\e(u):=\tsum_j \ik(u-j) g(j)=g(u)+\tsum_j\ik(u-j)R_1(j,u).
\ee
By assumption, $0<\e t<A$ and $\e|\chq|\le A$, so $|\e\cht+\e\chq|<2A$ and
\be\label{g derivs}
|g^{\prime\prime}(u)|=\left|\pa_{\cht}^2\left(f(\e \cht+\e\chq)\cht^{1/2}\right)\right|=O\big(\cht^{-3/2}\big),\ \e \cht<A,\ \cht\to+\infty. 
\ee
The values of $j,s,u$ used in \eqref{T exp} and \eqref{interp main} satisfy $|j-s|\le|j-u|\le\text{diam}(\tsp\ik)$. Hence, by \eqref{T exp} and \eqref{g derivs}
\be\label{rem ders}\bs
&R_1(j,u)=O(\cht^{-3/2}),\ j-u\in\tsp\ik,\ \e \cht<A,\ \cht\to+\infty.
\end{split}
\ee
Furthermore, using that $\ik$ is exact up to order one,
\be\label{R 1rst der}\bs
\pa_u R_1(j,u)=&-g^{\prime\prime}(u)(j-u),\\ 
g_\e^{\prime}(u)-g^{\prime}(u)=&\pa_u \tsum_j\ik(u-j)R_1(j,u)\\
=&\tsum_j\ik^\prime(u-j)R_1(j,u)-g^{\prime\prime}(u)\tsum_j\ik(u-j)(j-u)\\
\hspace{2cm}=&\tsum_j\ik^\prime(u-j)R_1(j,u),
\end{split}
\ee
and 
\be\label{R 2nd der}\bs
g_\e^{\prime\prime}(u)-g^{\prime\prime}(u)
=&\tsum_j\ik^{\prime\prime}(u-j)R_1(j,u)-g^{\prime\prime}(u)\tsum_j\ik^\prime(u-j)(j-u)\\
\hspace{2cm}=&\tsum_j\ik^{\prime\prime}(u-j)R_1(j,u)-g^{\prime\prime}(u).
\end{split}
\ee
Combining \eqref{interp main}--\eqref{R 2nd der} and using that $\ik^{\prime\prime}\in L^\infty(\br)$ finishes the proof.
\end{proof}

To prove \eqref{H props 2 a}, in view of \eqref{h_eps} and \eqref{sum j}, we consider
\be\label{sum j2 v1}
H_{aux}(\cht):=\tsum_{j_2}\ik_p(\cht+\chq-j_2)a_0(\theta,\e j_2)(j_2-\chq)_+^{1/2}.
\ee
By Lemma~\ref{lem:aux asymp}, 
\be\label{aymp v1}
\pa_{\cht}^{k-1}\left(\pa_{\cht}H_{aux}(\cht)-\pa_{\cht}\big[a_0(\theta,\e (\cht+\chq))\cht^{1/2}]\right)=O(\cht^{-3/2}),\ k=1,2.
\ee
We moved one derivative inside the parentheses because $H$ is defined in terms of $h^\prime$ in \eqref{sum j}.
Replacing $\cht$ with $[\Phi(\theta+\e\mu s)-\Phi(\theta)]/\e+\cht$, multiplying both sides by 
$K(\theta+\e\mu s)\ik_\al(s)$, and integrating with respect to $s$ (cf. \eqref{h_eps}, \eqref{sum j}) finishes the proof. This also gives a formula for $\varkappa$:
\be\label{varkappa def}
\varkappa(t,q;\theta)=t \pa_t a_0(\theta,t+q)+(1/2)a_0(\theta,t+q).
\ee
This shows that $\varkappa$ is smooth and bounded with all derivatives on any compact set. The claim \eqref{H props 2 b} immediately follows as well. 

\subsection{Proof of Lemma~\ref{lem:Fel bnd}}\label{sec:prf Fel bnd}

\subsubsection{Estimation away from the stationary points of $Q$.}
It follows from \eqref{G props} that the magnitude of $G$ can be controlled only by its first argument, $\cht$. Therefore, in this section, we omit the dependence of $G(\cht,\chq;*)$ on $\chq$ and $*=(x,\theta,\e)$. In view of \eqref{Fel def}, we study the convergence of the sum
\be\label{last sum}
J_\e:=\e^{1/2}\tsum_{j_1\in J_\al}G(Q(x,\al_{j_1})/\e),\ x\in U^\prime.
\ee
Recall that $Q$ depends on
the selected $P(\al)$. 

Fix any $0<\de\ll1$ and $x\in U^\prime$. Since $x$ is fixed, we can drop it from notation. If $|Q(\al)|\ge \de$ on $I_\al$, using \eqref{G props} in \eqref{last sum} gives the same bound $|J_\e|\le c\e^{1/2}(1/\e)\e^{1/2}=O(1)$ for each of the partial sums: over $Q(\al_{j_1})>0$ and over $Q(\al_{j_1})<0$. Thus, in what follows we may assume $|Q(\al)|\le\de$ for some $\al\in I_\al$. If $|Q_\al^\prime(\al)|\ge\de$ on $I_\al$, then $|Q_\al^\prime(\al)|\asymp |\al-a_0|$ on $I_\al$ and
\be\label{Je est 1}
|J_\e|\le c\e^{1/2}\sum_{j_1=1}^{1/\e}\big(j_1^{-1/2}+\e^{1/2}+j_1^{-1}\big)<c,\ \e\to0.
\ee

\subsubsection{Estimation near a stationary point of $Q$.}
\textcolor{black}{Based on the results of the preceding subsection, it remains to consider the case when $|Q(a_1)|$, $|Q_\al^\prime(a_2)|\le\de$ for some $a_1,a_2\in I_\al$. Since $I_\al$ can be made as small as we like (but finitely small), it is instructive to consider the case when $Q(z,\al)=Q_\al^\prime(z,\al)=0$ for some $(z,\al)\in \us\times \I_\al$.} This happens if and only if $z\in\s$ and $P(\al)$ is associated with $z$. 

To prove sufficiency, suppose there is a point $(z,\al)\in \us\times \I_\al$ such that 
\be\bs
&Q(z,\al)=\Phi(z,\al)-\Phi(x(\al),\al)=0, \\
&Q_\al^\prime(z,\al)=\Phi_\al^\prime(z,\al)-\Phi_\al^\prime(x(\al),\al)=0.
\end{split}
\ee
The Bolker condition, Assumption~\ref{geom GRT}\eqref{bolk}, gives $z=x(\al)$.  

The necessity is established in the following Lemma~\ref{lem:Q 2nd der}, which is proven in Appendix~\ref{sec:Qprpr}.

\begin{lemma}\label{lem:Q 2nd der} Suppose Assumptions~\ref{ass:Phi}, \ref{geom GRT}, and \ref{ass:f props} are satisfied. Pick any $\tilde y\in\Gamma$. Let $\s_{\tilde y}$ be tangent to $\s$ at $\tilde x$. Let $x(\al)$ be the solution to \eqref{for Q 1} associated with $\tilde x$ which satisfies $\tilde x=x(\tilde \al)$, see Figure~\ref{fig:tangency}. One has 
\be\label{Q 2nd der}\begin{split}
&Q(\tilde x,\tilde \al)=Q_\al^{\prime}(\tilde x,\tilde \al)=0,\\ 
&Q_{\al\al}^{\prime\prime}(\tilde x,\tilde \al)=\det M(\tilde y)\frac{|x_\al^\prime(\tilde \al)|^2}{|\dd_x\Phi(\tilde x,\tilde \al)||\dd_x H(\tilde x)|}>0
\end{split}
\ee
and
\be\label{Del Phi res}
|\Delta_\Phi(\tilde x,\tilde \al)|=|\det M(\tilde y)|\frac{|x_\al^\prime(\tilde \al)|}{|\dd_x H(\tilde x)|}.
\ee
\end{lemma}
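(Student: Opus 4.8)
Write $\tilde x=x(\tilde\al)$, and recall from \eqref{for Q 1} that $x(\al),\nu_1(\al)$ solve $H(x(\al))=0$, $H_x^\prime(x(\al))=\nu_1(\al)\Phi_x^\prime(x(\al),\al)$ and $P(\al)=\Phi(x(\al),\al)$, while $Q(x,\al)=\Phi(x,\al)-P(\al)$ by \eqref{F step 2}; by \eqref{dHdp} (applied with $y=(\al,P(\al))$, so $\nu_1(\al)=\nu_{1*}(\al,P(\al))$) one has $\nu_1(\al)=|H_x^\prime(x(\al))|/|\Phi_x^\prime(x(\al),\al)|>0$. First I would record two elementary consequences of differentiating the defining relations in $\al$. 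Differentiating $H(x(\al))\equiv 0$ and using $H_x^\prime=\nu_1\Phi_x^\prime$ with $\nu_1\neq 0$ gives the orthogonality $\Phi_x^\prime(x(\al),\al)\cdot x_\al^\prime(\al)=0$; hence $P^\prime(\al)=\Phi_x^\prime(x(\al),\al)\cdot x_\al^\prime(\al)+\Phi_\al^\prime(x(\al),\al)=\Phi_\al^\prime(x(\al),\al)$, so $Q_\al^\prime(x,\al)=\Phi_\al^\prime(x,\al)-\Phi_\al^\prime(x(\al),\al)$, which vanishes at $x=\tilde x=x(\tilde\al)$. Differentiating once more, $P^{\prime\prime}(\al)=\Phi_{x\al}^{\prime\prime}(x(\al),\al)\cdot x_\al^\prime(\al)+\Phi_{\al\al}^{\prime\prime}(x(\al),\al)$, whence $Q_{\al\al}^{\prime\prime}(\tilde x,\tilde\al)=\Phi_{\al\al}^{\prime\prime}(\tilde x,\tilde\al)-P^{\prime\prime}(\tilde\al)=-\,\Phi_{x\al}^{\prime\prime}(\tilde x,\tilde\al)\cdot x_\al^\prime(\tilde\al)$.

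Next I would trade $\Phi_{x\al}^{\prime\prime}$ for the Hessian combination appearing in $M(\tilde y)$ (see \eqref{M def}). Differentiating $H_x^\prime(x(\al))=\nu_1(\al)\Phi_x^\prime(x(\al),\al)$ at $\tilde\al$ yields
\[
\nu_1(\tilde\al)\,\Phi_{x\al}^{\prime\prime}(\tilde x,\tilde\al)=\big[H_{xx}^{\prime\prime}(\tilde x)-\nu_1(\tilde\al)\Phi_{xx}^{\prime\prime}(\tilde x,\tilde\al)\big]x_\al^\prime(\tilde\al)-\nu_1^\prime(\tilde\al)\Phi_x^\prime(\tilde x,\tilde\al).
\]
Pairing this with $x_\al^\prime(\tilde\al)$ and using $\Phi_x^\prime(\tilde x,\tilde\al)\cdot x_\al^\prime(\tilde\al)=0$ removes the $\nu_1^\prime$ term, so $Q_{\al\al}^{\prime\prime}(\tilde x,\tilde\al)=-\nu_1(\tilde\al)^{-1}\big([H_{xx}^{\prime\prime}(\tilde x)-\nu_1(\tilde\al)\Phi_{xx}^{\prime\prime}(\tilde x,\tilde\al)]x_\al^\prime(\tilde\al),x_\al^\prime(\tilde\al)\big)$. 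Since $x(\al)\in\s$, $x_\al^\prime(\tilde\al)$ is tangent to $\s$ at $\tilde x$, i.e. $x_\al^\prime(\tilde\al)=|x_\al^\prime(\tilde\al)|\,e$ for a unit tangent $e$ (orientation irrelevant); substituting the identity $\det M(\tilde y)=-([H_{xx}^{\prime\prime}(\tilde x)-\nu_1(\tilde\al)\Phi_{xx}^{\prime\prime}(\tilde x,\tilde\al)]e,e)\,|\Phi_x^\prime(\tilde x,\tilde\al)|^2$ from the proof of Lemma~\ref{lem: matr M} (cf. \eqref{det M prelim}), together with $\nu_1(\tilde\al)=|H_x^\prime(\tilde x)|/|\Phi_x^\prime(\tilde x,\tilde\al)|$, gives exactly the formula for $Q_{\al\al}^{\prime\prime}(\tilde x,\tilde\al)$ in \eqref{Q 2nd der}. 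For the strict positivity I would note $x_\al^\prime(\tilde\al)\neq 0$: otherwise the displayed relation forces $\Phi_{x\al}^{\prime\prime}(\tilde x,\tilde\al)\parallel\Phi_x^\prime(\tilde x,\tilde\al)$, so $\Delta_\Phi(\tilde x,\tilde\al)=\det[\Phi_x^\prime,\Phi_{x\al}^{\prime\prime}]=0$ (see \eqref{Delta det}--\eqref{Delta det prop}), contradicting assumption~\ref{geom GRT}\eqref{li}; combined with $\det M(\tilde y)>0$ (Lemma~\ref{lem: matr M}) this proves \eqref{Q 2nd der}.

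For \eqref{Del Phi res} I would use that $\Phi_x^\prime(\tilde x,\tilde\al)\perp e$, so $\{e,\Phi_x^\prime(\tilde x,\tilde\al)/|\Phi_x^\prime(\tilde x,\tilde\al)|\}$ is an orthogonal basis and $|\Delta_\Phi(\tilde x,\tilde\al)|=|\det[\Phi_x^\prime(\tilde x,\tilde\al),\Phi_{x\al}^{\prime\prime}(\tilde x,\tilde\al)]|=|\Phi_x^\prime(\tilde x,\tilde\al)|\,|\Phi_{x\al}^{\prime\prime}(\tilde x,\tilde\al)\cdot e|$ (only the $e$-component of $\Phi_{x\al}^{\prime\prime}$ survives the determinant with $\Phi_x^\prime$). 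Since $\Phi_{x\al}^{\prime\prime}(\tilde x,\tilde\al)\cdot e=(\Phi_{x\al}^{\prime\prime}(\tilde x,\tilde\al)\cdot x_\al^\prime(\tilde\al))/|x_\al^\prime(\tilde\al)|=-Q_{\al\al}^{\prime\prime}(\tilde x,\tilde\al)/|x_\al^\prime(\tilde\al)|$, plugging in \eqref{Q 2nd der} gives $|\Delta_\Phi(\tilde x,\tilde\al)|=|\Phi_x^\prime(\tilde x,\tilde\al)|\,Q_{\al\al}^{\prime\prime}(\tilde x,\tilde\al)/|x_\al^\prime(\tilde\al)|=|\det M(\tilde y)|\,|x_\al^\prime(\tilde\al)|/|H_x^\prime(\tilde x)|$, as claimed. (Alternatively one can expand $\det[\Phi_x^\prime,\Phi_{x\al}^{\prime\prime}]$ directly via the displayed relation for $\nu_1\Phi_{x\al}^{\prime\prime}$, the $\nu_1^\prime\Phi_x^\prime$ term again dropping out of the determinant.)

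The computation is essentially routine; the one place requiring care is the bookkeeping of the $\al$-derivatives of $x(\al),\nu_1(\al),P(\al)$ and recognizing that the two ``unwanted'' contributions — the term $\nu_1^\prime(\tilde\al)\Phi_x^\prime(\tilde x,\tilde\al)$ and the normal component of $\Phi_{xx}^{\prime\prime}(\tilde x,\tilde\al)x_\al^\prime(\tilde\al)$ — vanish upon pairing with $x_\al^\prime(\tilde\al)$, or upon forming the determinant with $\Phi_x^\prime(\tilde x,\tilde\al)$, because of the orthogonality $\Phi_x^\prime(\tilde x,\tilde\al)\cdot x_\al^\prime(\tilde\al)=0$. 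Once this is in hand, \eqref{Q 2nd der} and \eqref{Del Phi res} follow immediately from \eqref{det M prelim}, \eqref{dHdp} and Lemma~\ref{lem: matr M}.
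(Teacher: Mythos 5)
Your proposal is correct and follows essentially the same route as the paper's proof in appendix~\ref{sec:Qprpr}: differentiate the defining relations \eqref{for Q 1}, pair with $x_\al^\prime$ so the $\nu_1^\prime\Phi_x^\prime$ term drops by the orthogonality $\Phi_x^\prime\cdot x_\al^\prime=0$, and conclude via \eqref{det M prelim} and \eqref{dHdp}. The only differences are cosmetic streamlining — you reach $Q_{\al\al}^{\prime\prime}=-\Phi_{x\al}^{\prime\prime}\cdot x_\al^\prime$ by differentiating $P^\prime(\al)=\Phi_\al^\prime(x(\al),\al)$ instead of the paper's \eqref{for Q 2}+\eqref{for Q 5}, and you add a short (correct) argument, via assumption~\ref{geom GRT}\eqref{li}, that $x_\al^\prime(\tilde\al)\neq0$, a point the paper leaves implicit.
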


By the above argument, we can suppose that the point $x$ selected following \eqref{last sum} is located sufficiently close to the curve $x(\al),\al\in I_\al$. \textcolor{black}{Otherwise, by shrinking $I_\al$ if necessary, we can ensure that either $|Q(\al)|\ge\de$ or $|Q_\al^\prime(\al)|\ge\de$ on $I_\al$. By Lemma~\ref{lem:Q 2nd der}, 
$Q_{\al\al}^{\prime\prime}(x(\al),\al)\ge c>0$ for all $\al\in I_\al$.} By continuity, $Q_{\al\al}^{\prime\prime}(\al)\ge c>0$ for all $\al\in I_\al$. 

Due to the lower bound on $Q_{\al\al}^{\prime\prime}$, the number of $\al_{j_1}$ in the set $\{\al\in I_\al:|Q(\al)|\le c\e\}$ is at most $A\e^{-1/2}$, where $A$ is independent of $x$. Since $G$ is bounded, the total contribution of such $\al_{j_1}$ to $J_\e$ is bounded. Hence, by \eqref{G props}, it remains to estimate the following two sums
\be\label{last sums pm}\bs
J_\e^+:=&\e^{1/2}\sum_{\al_{j_1}\in I_\al^+}\bigg[\e^{1/2}+\frac\e{Q(\al_{j_1})}\bigg],\ I_\al^+:=\{\al\in I_\al:\, Q(\al)\ge\e\},\\
J_\e^-:=&\e^{1/2}\sum_{\al_{j_1}\in I_\al^-}\bigg(-\frac \e{Q(\al_{j_1})}\bigg)^{1/2},\ I_\al^-:=\{\al\in I_\al:\, Q(\al)\le-\e\}.
\end{split}
\ee
For estimation purposes, we can assume $Q^{\prime\prime}(\al)\ge 1$, $\al\in I_\al$ (rather than the more cumbersome $Q^{\prime\prime}(\al)\ge c$). When the argument $x$ of $Q$ is omitted and we write $Q(\al)$, then a prime denotes the derivative with respect to $\al$.

\color{black}
Since $Q^{\prime\prime}(\al)\ge 1$, $I_\al^+$ is the union of no more than two intervals, and $I_\al^-$ is a single interval. Also, the equation $Q^\prime(\al)=0$, $\al\in I_\al$, has at most one solution. 

Begin with $J_\e^+$. Let $[a,b]$ be one of at most three intervals that make up $I_\al^+$, subject to the condition $Q^\prime(\al)\not=0$ on $(a,b)$. Suppose $Q^\prime(\al)>0$ on $(a,b)$. The proof in the case $Q^\prime(\al)<0$ is analogous.

By construction, $Q(a)\ge\e$. Consider the function $Q_1(\al)=\e+(\al-a)^2/2$, $\al\in[a,b]$, which satisfies $Q_1(a)=\e$, $Q_1^\prime(a)=0$, and $Q_1^{\prime\prime}(\al)=1$ on $[a,b]$. Clearly, $Q_1(\al)\le Q(\al)$, $\al\in[a,b]$. Therefore, the value of $J_\e^+$ is increased if we replace $Q$ with $Q_1$. There are $O(1/\e)$ terms in the first sum in \eqref{last sums pm}, so the term $O(\e^{1/2})$ in the brackets in $J_\e^{+}$ contributes an $O(1)$ term. Then
\be\label{Jpl est v1}\bs
J_\e^{+}\le & c+c\e^{1/2}\sum_{j} \frac \e{\e+(\e j)^2/2}\le c.
\end{split}
\ee

Next, consider $J_\e^-$. Let $[a,b]$ be one of at most two intervals that make up $I_\al^-$, subject to the condition $Q^\prime(\al)\not=0$ on $(a,b)$. Again, we can assume $Q^\prime(\al)>0$ on $(a,b)$. Consider the function $Q_1(\al)=Q(a)+(\al-a)^2/2$, $\al\ge a$, which satisfies $Q_1(a)=Q(a)$, $Q_1^\prime(a)=0$, and $Q_1^{\prime\prime}(\al)=1$ for $\al\ge a$ (see the green curve in Fig.~\ref{fig:diag below}). By construction, $Q_a^\prime(\al)\le Q^\prime(\al)$, $\al\in[a,b]$. Pick any $t\in[a,b]$, and find $t_1$ by solving $Q_1(t_1)=Q(t)$, i.e. $t_1=a+[2(Q(t)-Q(a))]^{1/2}$ (see Fig.~\ref{fig:diag below}). We claim that $Q_1^\prime(t_1)\le Q^\prime(t)$.

\begin{figure}[h]
{\centerline{\hbox{
{\epsfig{file={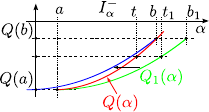}, width=5.5cm}}
}}}
\caption{Construction used to estimate $J_\e^-$.}
\label{fig:diag below}
\end{figure}

To prove the claim, we need to show that $t_1-a=[2 (Q(t)-Q(a))]^{1/2}<Q^\prime(t)$. This is equivalent to
\be
2\bigg[Q^\prime(a)t+\int_a^t (t-s)Q^{\prime\prime}(s)\dd s\bigg] \le \bigg[Q^\prime(a)+\int_a^t Q^{\prime\prime}(s)\dd s\bigg]^2,\ a\le t\le b.
\ee
Squaring the right-hand side and using that $Q^\prime(a)\ge 0$ and $Q^{\prime\prime}(s)\ge 1$, we see that the desired result follows if we prove
\be
2\int_a^t (t-s)Q^{\prime\prime}(s)\dd s \le \bigg[\int_a^t Q^{\prime\prime}(s)\dd s\bigg]^2.
\ee
Writing $Q^{\prime\prime}(s)=1+\phi(s)$, where $\phi(s)\ge0$, and simplifying proves the claim.

Let $b_1$ be obtained by solving $Q_1(b_1)=Q(b)$. We have $Q_1^\prime(\al)\le Q^\prime(\al)$, so $b_1\ge b$. The preceding argument implies that if the graph of $Q_1$ is shifted by $b_1-b$ to the left (to obtain the blue curve in Fig.~\ref{fig:diag below} as indicated by the black arrow), then it will lie above the graph of $Q$ (shown in red). This means that $Q_1(\al+(b_1-b))\ge Q(\al)$, $\al\in[a,b]$. Hence, if we replace $Q(\al)$ by $Q_1(\al+(b_1-b))$ and $[a,b]$ by $[a-(b_1-b),b]$, then the value of $J_\e^-$ is increased. The shift, of course, is irrelevant for estimation purposes, so we instead consider $Q_1(\al)$ and $[a,b_1]$. This leads to the following bound
\be\label{Jmn est v1}\bs
J_\e^-&\le c\e^{1/2}\sum_{h+(\e j)^2/2<-\e} \bigg[\frac {\e}{-h-(\e j)^2/2}\bigg]^{1/2}
\le c\int_0^{|2h|^{1/2}} \frac{\dd x}{(|2h|-x^2)^{1/2}}<\infty.
\end{split}
\ee
Note that the value of the last integral is independent of $h$.

\color{black}
It is clear that in the above arguments all the bounds are uniform with respect to $x\in U^\prime$.

\subsection{Proof of Lemma~\ref{lem:Q 2nd der}}\label{sec:Qprpr}

Recall that $P(\al)=\Phi(x(\al),\al)$ (see the paragraph below \eqref{for Q 1}). Therefore
\be\label{for Q def}
Q(\al)=\Phi(\tilde x,\al)-P(\al)=\Phi(\tilde x,\al)-\Phi(x(\al),\al).
\ee
By construction, $x^\prime(\al)$ is tangent to $\s$ and $\dd_x\Phi(x(\al),\al)$ is conormal to $\s$ at $x(\al)$ (see \eqref{for Q 1}). Differentiation of \eqref{for Q def} with respect to $\al$ gives
\be\label{Q_al prime}\bs
Q^\prime(\al)=&\Phi_\al^\prime(\tilde x,\al)-\big[\dd_x\Phi(x(\al),\al)\cdot x_\al^\prime+\Phi_\al^\prime(x(\al),\al)\big]\\
=&\Phi_\al^\prime(\tilde x,\al)-\Phi_\al^\prime(x(\al),\al).
\end{split}
\ee
Using that $x(\tilde \al)=\tilde x$ proves the first line in \eqref{Q 2nd der}.

Next, differentiate \eqref{Q_al prime} with respect to $\al$ and set $\al=\tilde\al$ to obtain
\be\label{for Q 2}
Q_{\al\al}^{\prime\prime}=-\dd_x\Phi_{\al}^{\prime}\cdot x_\al^\prime.
\ee
Differentiate the second equation in \eqref{for Q 1} with respect to $\al$ and set $\al=\tilde\al$:
\be\label{for Q 3}
\dd_x^2 H\, x_\al^\prime=\nu_1^\prime\dd_x\Phi+\nu_1\big[\dd_x^2\Phi\,x_\al^\prime+\dd_x\Phi_\al^\prime\big].
\ee
Apply \eqref{for Q 3} to $x_\al^\prime$ and use that $\dd_x\Phi \cdot x_\al^\prime\equiv0$ a second time to get
\be\label{for Q 4}
 \dd_x^2 H(x_\al^\prime,x_\al^\prime)=\nu_1\big[\dd_x^2\Phi(x_\al^\prime,x_\al^\prime)+\dd_x\Phi_\al^\prime\cdot x_\al^\prime\big].
\ee
Using \eqref{for Q 4} in \eqref{for Q 2} and applying Lemma~\ref{lem: matr M} finally gives:
\be\label{for Q 6}\bs
Q_{\al\al}^{\prime\prime}=&-\big[(1/\nu_1)\dd_x^2 H(x_\al^\prime,x_\al^\prime)-\dd_x^2 \Phi(x_\al^\prime,x_\al^\prime)\big]\\
=&-\dd_x^2[H-\nu_1\Phi](e,e)\frac{|x_\al^\prime|^2}{\nu_{1}},\ e:=x_\al^\prime/|x_\al^\prime|.
\end{split}
\ee
Combining this with \eqref{dHdp} and \eqref{det M prelim} proves the second line in \eqref{Q 2nd der}.

Next we prove \eqref{Del Phi res}. By \eqref{Delta det}, \eqref{Delta det prop},
$\Delta_\Phi(x,\al)=\det \bma \dd_x\Phi \\ \dd_x \Phi_\al^\prime\ema$. Considering the matrix product $\Delta_\Phi(x,\al) V$, where $V=\bma e_0 & e\ema$ (see \eqref{e0e matr}), gives
\be\label{det Del prelim}
|\Delta_\Phi(x,\al)|=|\dd_x\Phi||\dd_x\Phi_\al^\prime\cdot e|.
\ee
Using \eqref{for Q 2}, \eqref{for Q 6}, and then \eqref{det M prelim} gives 
\be\label{det Del 2}\bs
|\dd_x\Phi||\dd_x\Phi_\al^\prime\cdot e|&=
\big|\dd_x^2[H-\nu_1\Phi](e,e)\big|\frac{|\dd_x\Phi||x_\al^\prime|}{|\nu_1|}
=\frac{|\text{det}M||x_\al^\prime|}{|\nu_1||\dd_x\Phi|},
\end{split}
\ee
and \eqref{Del Phi res} follows from \eqref{dHdp}.

\section{Proof of Lemmas~\ref{lem:delG props}, \ref{lem:del F0}, and \ref{lem:del F0 v2} }\label{sec:prf delG and F}

\subsection{Proof of Lemma~\ref{lem:delG props}}\label{sec:prf delG props}
For convenience, we drop the arguments $\chq$, $\theta$ and $\e$ from all the functions. 
\color{black}
By \eqref{key int} and \eqref{key int dg},
\be\label{ki 2 parts}\bs
&\Delta G(r,\cht)=G(r+\cht)-G(\cht)=\int_{\br}\chii(\e(s-\cht))\frac {\Delta H(r,s)}{s-\cht} \dd s.
\end{split}
\ee

Begin with \eqref{delG prop 1}. Adding \eqref{H props 2 a} with $k=1$ and \eqref{H props 2 b} yields $|\Delta H(r,\cht)|\le c|r|(1+|\cht|)^{-3/2}$. The bound $|\Delta G|\le c|r|/|\cht|$, $\cht\to-\infty$, follows from \eqref{H props 1}. 

Suppose $\cht\to+\infty$. Using \eqref{H props 1}, the integral in \eqref{ki 2 parts} is split into three
\be\label{3 subparts}\bs
&\bigg(\int_{-c}^{\cht-1}+\int_{\cht-1}^{\cht+1}+\int_{\cht+1}^\infty\bigg)\chii(\e(s-\cht))\frac {\Delta H(r,s)}{s-\cht} \dd s\\
&=:J_1+J_2+J_3.
\end{split}
\ee
Breaking up the integral over $[-c,\cht-1]$ into the integrals over $[-c,\cht/2]$ and $[\cht/2,\cht-1]$ we get  $|J_1|\le c|r|/\cht$. Breaking up the integral over $[\cht+1,\infty)$ into the integrals over $[\cht+1,2\cht]$ and $[2\cht,\infty)$ gives $|J_3|\le c|r|\cht^{-3/2}\ln\cht$. 

Using the function $(\chii(\e \chw)-1)/\chw$, we obtain
\be\label{J2 est1}\bs
&J_2=J_2^\prime+O(\e |r|\cht^{-3/2}),\\
&J_2^\prime=\int_{-1}^{1}\frac1{\chw} \Delta H(r,\chw+\cht)\dd \chw=\int_{-1}^{1}\frac1{\chw} \int_{w}^{r+w} \pa_{\cht}H(s+\cht)\dd s\dd \chw.
\end{split}
\ee
To estimate $J_2^\prime$, we may assume without loss of generality that $0<r<1$ and rewrite $J_2^\prime$ as follows 
\be\label{J2pr}\bs
J_2^\prime=&\int_{-1}^{-1+r}\pa_{\cht}H(s+\cht)\int_{-1}^s\frac{\dd \chw}{\chw}\dd s
+\int_{-1+r}^1\pa_{\cht}H(s+\cht)\int_{s-r}^s\frac{\dd \chw}{\chw}\dd s\\
&+\int_1^{1+r}\pa_{\cht}H(s+\cht)\int_{s-r}^1\frac{\dd \chw}{\chw}\dd s.
\end{split}
\ee
Adding \eqref{H props 2 a} with $k=1$ and \eqref{H props 2 b} yields $\pa_{\cht} H(\cht)=O(\cht^{-3/2})$. After simple but tedious calculations, \eqref{J2pr} implies:
\be\label{J2pr1}\bs
|J_2^\prime|\le & c\cht^{-3/2}\bigg[\int_{-1}^{-1+r}|\ln|s||\dd s
+\int_{-1+r}^1\bigg|\ln\frac{|s|}{|s-r|}\bigg|\dd s+\int_1^{1+r}|\ln|s-r||\dd s\bigg]\\
=& O(r\ln(1/r)\cht^{-3/2}).
\end{split}
\ee
Therefore, $J_2=O(r\ln(1/r)\cht^{-3/2})$. 

The above argument also proves that $\Delta G(r,\cht)$ is bounded on compact sets. Combining the estimates of $J_1,J_2,J_3$ finishes the proof of \eqref{delG prop 1}.

It is easy to check that
\be\bs
&\Delta G(r+h,\cht)-\Delta G(r,\cht)=\Delta G(h,r+\cht),\\
&\Delta G(r,\cht+h)-\Delta G(r,\cht)=\Delta G(h,r+\cht)-\Delta G(h,\cht),
\end{split}
\ee
Since $|r|\le A_2$, \eqref{delG prop 2} and \eqref{delG prop 3} follow from \eqref{delG prop 1}.
\color{black}

\subsection{Proof of Lemma~\ref{lem:del F0}}\label{sec:prf del F0}

Recall that the parameter $A\gg1$ used in Lemma~\ref{lem:del F0} was selected above the equation \eqref{Q bnd}. 

First we investigate the dependence of $G$ (which is implicitly used in \eqref{F via G part2} as part of $\Delta G$) on $\e$ in its last argument. We need the following technical result, which is proven at the end of this section. Recall that $G_0$ is defined in \eqref{G0H0}, and $G$ -- in \eqref{key int}.
\begin{lemma}\label{lem:G approx} Pick any $A_1\gg1$. Suppose $|\cht|<A_1$, $|\e\chq-P(0)|<\e^{1/2} A_1$, and $|\theta|<\e^{1/2}A_1$. Under the assumptions of Theorem~\ref{thm:main res} one has
\be\label{G simpl v3}\bs
G(\cht,\chq;\theta,\e)=\mu G_0(\cht,\chq)+O(\e^{1/2})
\end{split}
\ee
uniformly in $\cht$, $\chq$, and $\theta$ restricted to the indicated sets.
\end{lemma}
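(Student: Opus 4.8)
The plan is to expand every ingredient entering the definition \eqref{key int} of $G(\cht,\chq;\theta,\e)$ around $\e=0$ and $\theta=0$, keeping track of the $\e$-dependence carefully enough to reach an $O(\e^{1/2})$ error. Recall the chain $G\leftarrow H\leftarrow h\leftarrow K,\ik_\al,\ik_p$ through \eqref{key int}, \eqref{sum j}, \eqref{h_eps}. First I would analyze $h(u;\theta,\e)$ in \eqref{h_eps}: inside the integral, $\Phi(\theta+\e\mu s)-\Phi(\theta)=\e\mu s\,\Phi_\al^\prime(\theta)+O(\e^2)$ (uniformly for $s\in\tsp\ik_\al$, which is compact), and $\Phi_\al^\prime(\theta)=\Phi_\al^\prime(0)+O(\theta)=\Phi_\al^\prime(0)+O(\e^{1/2})$ by the hypothesis $|\theta|<\e^{1/2}A_1$; likewise $K(\theta+\e\mu s)=K(x_0,\al_0)+O(\e^{1/2})$ since $K$ is smooth (here I use the convention $\al_0$ is the $\al$-component of $y_0$, and the fact that $\theta$ plays the role of an $\al$-coordinate near $\al_0$). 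Since $\ik_p\in C_0^2$ is Lipschitz, replacing its argument $\big[\Phi(\theta+\e\mu s)-\Phi(\theta)\big]/\e+u$ by $\mu s\,\Phi_\al^\prime(0)+u$ costs $O(\e^{1/2})$, and pulling $K$ out of the integral costs another $O(\e^{1/2})$. Therefore $h(u;\theta,\e)=\mu h_0(u)+O(\e^{1/2})$, and the same kind of estimate applies to $h^\prime$ (differentiate \eqref{h_eps} in $u$ first, i.e. work with $\ik_p^\prime\in C_0^1$, and repeat the argument), giving $h^\prime(u;\theta,\e)=\mu h_0^\prime(u)+O(\e^{1/2})$, both uniformly in $u\in\br$.

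Next I would propagate these estimates through $H$ in \eqref{sum j}. Write $H(\cht,\chq;\theta,\e)=\sum_{j_2}h^\prime(\cht+\chq-j_2;\theta,\e)a_0(\theta,\e j_2)(j_2-\chq)_+^{1/2}$ and compare with $\mu H_0(\cht,\chq)=\mu a_0(y_0)\sum_{j_2}h_0^\prime(\cht+\chq-j_2)(j_2-\chq)_+^{1/2}$. Under the hypotheses $|\cht|<A_1$ and $|\e\chq-P(0)|<\e^{1/2}A_1$ (so $\chq=P(0)/\e+O(\e^{-1/2})$), the relevant indices are $j_2\approx\chq$, i.e. $\e j_2\approx P(0)$; but $y_0=(\al_0,p_0)$ with $p_0=P(0)$ (the point of tangency), so $a_0(\theta,\e j_2)=a_0(y_0)+O(\e^{1/2})$ on the support of the summand (using $|\theta|<\e^{1/2}A_1$ and smoothness of $a_0$). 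The number of nonzero terms in the sum is $O(1)$ by compact support of $\ik_p$ (hence of $h^\prime$ in its first argument, cf. \eqref{H props 1}), and each factor $(j_2-\chq)_+^{1/2}$ is $O(1)$ there. Combining $h^\prime=\mu h_0^\prime+O(\e^{1/2})$ with $a_0(\theta,\e j_2)=a_0(y_0)+O(\e^{1/2})$ gives $H(\cht,\chq;\theta,\e)=\mu H_0(\cht,\chq)+O(\e^{1/2})$; analogously $\pa_\cht H=\mu\,\pa_\cht H_0+O(\e^{1/2})$, uniformly in $\chq,\theta$ in the stated ranges.

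Finally I would insert this into \eqref{key int}: $G(\cht,\chq;\theta,\e)-\mu G_0(\cht,\chq)=\int_\br\chii(\e\chw)\chw^{-1}\big[H(\chw+\cht,\chq;\theta,\e)-\mu H_0(\chw+\cht,\chq)\big]\dd\chw+\mu\int_\br(\chii(\e\chw)-1)\chw^{-1}H_0(\chw+\cht,\chq)\dd\chw$. The second integral is $O(\e^{1/2})$: $\chii(\e\chw)-1$ is supported in $|\chw|\gtrsim 1/\e$, and there $H_0(\chw+\cht,\chq)=O(|\chw|^{-3/2})$ by \eqref{H props 2 a}--\eqref{H props 2 b} applied with $\e=0$ (equivalently, $G_0(\cht+r,\chq)-G_0(\cht,\chq)$ obeys the same bounds as $\Delta G$, as noted after \eqref{last sum lim2}), so the integrand is $O(|\chw|^{-5/2})$ and the tail integral over $|\chw|\gtrsim1/\e$ is $O(\e^{3/2})$. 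For the first integral I would split $\int_{|\chw|<1}+\int_{|\chw|\ge1}$. On $|\chw|\ge1$ the integrand is bounded by $|\chw|^{-1}$ times the $O(\e^{1/2})$ pointwise bound on $H-\mu H_0$, but that is not summable, so here I instead use that $H-\mu H_0$ is itself $O(|\chw+\cht|^{-3/2})$ for large $\chw$ (from the expansions above, which respect the decay in \eqref{H props 2 a}), yielding an absolutely convergent $\chw$-integral whose total is $O(\e^{1/2})$. On $|\chw|<1$ I would use the $\cht$-derivative bound, i.e. the principal-value structure: $\int_{|\chw|<1}\chw^{-1}[H(\chw+\cht)-\mu H_0(\chw+\cht)]\dd\chw=\int_{|\chw|<1}\chw^{-1}\big([H(\chw+\cht)-H(\cht)]-\mu[H_0(\chw+\cht)-H_0(\cht)]\big)\dd\chw$ (the constant term integrates to zero in principal value), and then use that $\pa_\cht H-\mu\,\pa_\cht H_0=O(\e^{1/2})$, so the difference quotient is $O(\e^{1/2})$ uniformly, making this piece $O(\e^{1/2})$ as well. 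Summing the pieces gives $G-\mu G_0=O(\e^{1/2})$, which is \eqref{G simpl v3}.

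\textbf{Main obstacle.} The delicate point is the interplay between the non-integrable weight $1/\chw$ in \eqref{key int} and the uniformity of the $O(\e^{1/2})$ bounds: a naive $L^\infty$ bound on $H-\mu H_0$ is not enough because $\int\chw^{-1}\dd\chw$ diverges at both ends. One must exploit the decay of $H-\mu H_0$ at $+\infty$ (inherited from \eqref{H props 2 a}--\eqref{H props 2 b}, which requires checking that the error terms in the $h,H$ expansions decay in $\cht$ at the same rate $\cht^{-3/2}$, not just are $O(\e^{1/2})$) and the cancellation near $\chw=0$ (the principal-value/derivative bound). Getting both the $\e^{1/2}$ size and full uniformity in $\chq\in\br$, $|\theta|<\e^{1/2}A_1$, $|\cht|<A_1$ simultaneously is the crux; everything else is routine Taylor expansion and finite-sum bookkeeping enabled by the compact supports of $\ik_\al,\ik_p$.
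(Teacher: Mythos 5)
Your overall strategy (replace $h$ by $\mu h_0$ and $a_0$ by $a_0(y_0)$ at the level of $H$, then control the weight $1/\chw$ in \eqref{key int} by a near/far splitting) could be made to work, but the quantitative claims on which your far-region estimate rests are wrong, and they sit exactly at the point you yourself flag as the crux. First, the claim $H-\mu H_0=O(|\chw+\cht|^{-3/2})$ is false. By \eqref{H props 2 a}, $H(\cht,\chq;\theta,\e)=\varkappa(\e(\cht+\chq);\cdot)\,\cht^{-1/2}+O(\cht^{-3/2})$, while $\mu H_0(\cht,\chq)$ has the same form with a \emph{constant} coefficient built from $a_0(0,P(0))$ and $h_0$; over the range $1\le\cht\lesssim 1/\e$ relevant inside \eqref{key int}, the argument $\e(\cht+\chq)$ moves by $O(1)$, so the difference of the two coefficients is not small (it is of order $\e\cht+\e^{1/2}$), and in addition $\pa_{\cht}$ falling on $a_0(\theta,\e(\cht+\chq))$ contributes a term of size $\e\cht^{1/2}$, which grows with $\cht$. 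The actual size of $H-\mu H_0$ is of order $\e\cht^{1/2}+\e^{1/2}\cht^{-1/2}$, not $\cht^{-3/2}$. Second, your uniform bound $H-\mu H_0=O(\e^{1/2})$ was justified only for bounded first argument (where $j_2\approx\chq$ and $\e j_2\approx P(0)$); inside the $\chw$-integral the first argument reaches $\sim1/\e$, where $a_0(\theta,\e j_2)-a_0(y_0)$ is no longer $O(\e^{1/2})$. With only the bounds you actually establish, the far integral yields at best $O(\e^{1/2}\ln(1/\e))$, and obtaining the sharper, $\chw$-integrable bound requires a cancellation you never invoke, namely $\sum_{j}h^\prime(u-j;\theta,\e)\equiv0$ (exactness of order zero of $\ik_p$), applied via summation by parts to the difference kernel and to the $a_0$-increments. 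A related problem affects your principal-value piece near $\chw=0$: the bound $\pa_{\cht}(H-\mu H_0)=O(\e^{1/2})$ would need $h^{\prime\prime}-\mu h_0^{\prime\prime}=O(\e^{1/2})$, which is unavailable since $\ik_p\in C_0^2$ only (its second derivative is bounded, not Lipschitz); interpolating between the sup and Lipschitz bounds again costs a logarithm.

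For comparison, the paper sidesteps these issues by integrating in $\chw$ \emph{first}: it forms $\Q(u)=\int\chii(\e\chw)\chw^{-1}\bar h^\prime(\chw+u)\dd\chw$ and uses the kernel bound $|\Q(u)|\le c(1+u^2)^{-1}$ together with its support property (\eqref{Q def}, \eqref{aux int asymp}), so the replacements $a_0(\theta,\e j_2)\to a_0(\theta,\e\chq)\to a_0(0,P(0))$ are performed under a sum with quadratic decay that absorbs both $|a_0(\theta,\e j_2)-a_0(\theta,\e\chq)|\le c\e|j_2-\chq|$ and the growth $(j_2-\chq)_+^{1/2}$, giving $\e\sum_{j_2\le1/\e} j_2^{-1/2}=O(\e^{1/2})$ as in \eqref{key int appr}; the cutoff $\chii$ is then removed via the analogous bound on $\Q_c$, and finally $h$ is swapped for $\mu h_0$ via the kernel $\Q_\e$ built from the difference, with $|\Q_\e(u)|\le c\e\ln(1/\e)(1+u^2)^{-1}$ (\eqref{Q1 aux int asymp}), where the harmless logarithm is paired with a full power of $\e$. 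To repair your argument you would need either to adopt this order of operations or to supply the exactness/summation-by-parts cancellation for $H-\mu H_0$ and its $\cht$-increments; as written, the assertion that "the expansions respect the decay in \eqref{H props 2 a}" is a genuine gap.
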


Partition the interval $[-A,A]$ into the union of $K\gg1$ non-overlapping intervals of length $2A/K$. Let these intervals be denoted $B_k$, and $s_k$ be their centers, $k=1,2,\dots,K$. Recall that $P(\al)$ is associated with $x_0=0_2\in\s$. Clearly, 
\be\label{P exp}\begin{split}
\frac{P(\e^{1/2}s)}\e=&\frac{P(\e^{1/2}s_k)}\e+P^\prime(\e^{1/2}s_k)\frac{s-s_k}{\e^{1/2}}+O(1/K^{2})\\
=&\frac{P(\e^{1/2}s_k)}\e+\big[P^\prime(0)+O(\e^{1/2}s_k)\big]\frac{s-s_k}{\e^{1/2}}+O(1/K^{2})\\
=&\frac{P(\e^{1/2}s_k)}\e+P^\prime(0)\frac{s-s_k}{\e^{1/2}}+O(1/K),\ s\in B_k.
\end{split}
\ee
Here we have used that $A>0$ is fixed and $|s_k|<A$. Otherwise, the term $O(1/K)$ may grow with $A$. The same applies to many other bounds and big-$O$ terms in this section. 

\color{black}
Observe that $G_0(\cht,\chq)$ in \eqref{G0H0} is obtained from $G(\cht,\chq;x,\theta,\e)$ by setting $x=0_2$, $\theta=0$, $\e=0$ and $a_0(\theta,\e j_2)\equiv a_0(y_0)$ in \eqref{h_eps}, \eqref{sum j} and \eqref{key int}. Therefore, $G_0$ satisfies \eqref{delG prop 1}:
\be\label{del G0 bnd a}\bs
&|G_0(\cht+r,\chq)-G_0(\cht,\chq)|\le c r\ln(1/r),\  0<r<1,\ |\cht|<A,\ \chq\in\br.
\end{split}
\ee

To study the smoothness of $G_0$ with respect to the second argument, denote similarly to \eqref{G0H0}:
\be\label{G0H0-alt}\bs
&\check G_0(\cht,\chq_1,\chq_2):=\int_{\br}\frac {H_0(\chw+\cht,\chq_1,\chq_2)}\chw \dd \chw,\\
&\check H_0(\cht,\chq_1,\chq_2):=a_0(y_0)\sum_{j_2\in\mathbb Z}h_0^\prime(\cht+\chq_1-j_2)(j_2-\chq_2)_+^{1/2}.
\end{split}
\ee
Then, $G_0(\cht,\chq)=\check G_0(\cht,\chq,\chq)$. Furthermore,
\be\label{del G0 2 parts}\bs
G_0(\cht,\chq+r)-G_0(\cht,\chq)=&\big[G_0(\cht,\chq+r)-G_0(\cht-r,\chq+r)\big]\\
&+\big[\check G_0(\cht,\chq,\chq+r)-\check G_0(\cht,\chq,\chq)\big].
\end{split}
\ee
The difference in the first brackets is bounded as in \eqref{del G0 bnd a}.

Clearly, $G_0(\cht,\chq+1)\equiv G_0(\cht,\chq)$. Hence, we can assume $0\le \chq<1$. 
Using that $h_0$ is compactly supported and $|\cht|<A$, we obtain
\be\label{aux bnd 1}\bs
\int_{\br}\frac{h_0^\prime(\chw+\cht+\chq-j_2)}\chw \dd \chw = O(j_2^{-2}),\ j_2\to\infty.
\end{split}
\ee
Also,
\be\label{aux bnd 2}
\big|(x-r)_+^{1/2}-x_+^{1/2}\big|\le c[r/(1+|x|)]^{1/2},\ 0\le r<1,\ x\in\br.
\ee
The estimates \eqref{aux bnd 1}, \eqref{aux bnd 2} and \eqref{G0H0-alt} imply that the difference in the second brackets in \eqref{del G0 2 parts} is $O(r^{1/2})$. Together with \eqref{del G0 bnd a} this gives
\be\label{del G0 bnd b}
|G_0(\cht,\chq+r)-G_0(\cht,\chq)|\le c r^{1/2},\  0\le r<1,\ |\cht|<A.
\ee
\color{black}

Combining \eqref{P exp} and  \eqref{del G0 bnd b} with \eqref{G simpl v3} implies
\be\label{G in Bk}\begin{split}
G\bigg(\cht,\frac{P(\e^{1/2}s)}\e;\theta,\e\bigg)
=&\mu G_0\bigg(\cht,\frac{P(\e^{1/2}s)}\e\bigg)+O(\e^{1/2})\\
=&\mu G_0\bigg(\cht,\frac{P(\e^{1/2}s_k)}\e+P^\prime(0)\frac{s-s_k}{\e^{1/2}}\bigg)\\
&+O(1/K^{1/2})+O(\e^{1/2}), 
\end{split}
\ee
where $s=\e^{1/2}\mu j_1\in B_k$ and $|\theta|<\e^{1/2}A$.
The equation \eqref{G simpl v3} applies because the assumptions of Lemma~\ref{lem:del F0} imply
\be\label{th small}\bs
&\cht=\frac{Q_{\al\al}^{\prime\prime}}2 \e(\mu j_1)^2=O(1),\\
&\theta=\al_{j_1}=\e\mu j_1=O(\e^{1/2}),\\ 
&\e\chq=P(\e\mu j_1)=P(0)+O(\e^{1/2}),\ \e^{1/2}\mu|j_1|\le A,
\end{split}
\ee
see \eqref{F via G part2}. Further, using that $s^2=s_k^2+O(1/K)$ and \eqref{del G0 bnd a}, gives
\be\label{delG in Bk}\begin{split}
G\bigg(\frac{Q_{\al\al}^{\prime\prime}}2 s^2,\frac{P(\e^{1/2}s)}\e;\theta,\e\bigg)
=&\mu G_0\bigg(\frac{Q_{\al\al}^{\prime\prime}}2 s_k^2,\frac{P(\e^{1/2}s_k)}\e+P^\prime(0)\frac{s-s_k}{\e^{1/2}}\bigg)\\
&+O(1/K^{1/2})+O(\e^{1/2}),\ s\in B_k,|\theta|<\e^{1/2}A.
\end{split}
\ee
There are $|B_k|/(\e^{1/2}\mu)$ values of $j_1$ such that $\e^{1/2}\mu j_1\in B_k$. Substitute $s=\e^{1/2}\mu j_1$ into \eqref{delG in Bk} and sum with respect to $j_1$ to obtain
\be\label{sum delG in Bk}\begin{split}
\e^{1/2}&\sum_{\e^{1/2}\mu j_1\in B_k}G\bigg(\frac{Q_{\al\al}^{\prime\prime}}2 \e (\mu j_1)^2,\frac{P(\e\mu j_1)}\e;\theta,\e\bigg)\\
=&\e^{1/2}\mu\sum_{\e^{1/2}\mu j_1\in B_k}G_0\bigg(\frac{Q_{\al\al}^{\prime\prime}}2 s_k^2,u_\e+P^\prime(0)\mu j_1\bigg)\\
&+|B_k|\big[O(1/K^{1/2})+O(\e^{1/2})\big],\ u_\e:=\frac{P(\e^{1/2}s_k)-P^\prime(0)\e^{1/2}s_k}\e.
\end{split}
\ee
\textcolor{black}{The term $O(1/K^{1/2})$ is uniform with respect to $1\le k\le K$ and $0<\e\ll1$. 
The term $O(\e^{1/2})$ is uniform with respect to $1\le k\le K$ and $K\ge 1$.}
It follows from \eqref{G0H0} and \eqref{del G0 bnd b} that $G_0$ is 1-periodic in its last argument and H{\"o}lder continuous. 

From \eqref{for Q 1} and the definition $P(\al)=\Phi(x(\al),\al)$, 
\be\begin{split}
P^\prime(\al)&=[\Phi(x(\al),\al)]_\al^\prime=\dd_x\Phi(x(\al),\al)\cdot x_\al^\prime(\al)+\Phi_\al^\prime(x(\al),\al)\\
&=\Phi_\al^\prime(x(\al),\al).
\end{split}
\ee
By the assumption of the theorem, $P^\prime(0)\mu$ is irrational, so the points $P^\prime(0)\mu j_1$ are well distributed mod 1 \cite[Definition 5.1 and Example 5.2]{KN_06}. Take the limit $\e\to0$ in \eqref{sum delG in Bk} and use \cite[Theorem 5.1]{KN_06} to find
\be\label{int delG in Bk}\begin{split}
\lim_{\e\to0}\e^{1/2}&\sum_{\e^{1/2}\mu j_1\in B_k}G\bigg(\frac{Q_{\al\al}^{\prime\prime}}2 \e (\mu j_1)^2,\frac{P(\e\mu j_1)}\e;\e\mu j_1,\e\bigg)\\
=& \frac{|B_k|}\mu\bigg[\mu\int_0^1 G_0\bigg(\frac{Q_{\al\al}^{\prime\prime}}2 s_k^2,\chq\bigg)\dd\chq+O(1/K^{1/2})\bigg].
\end{split}
\ee
Sum \eqref{int delG in Bk} over all $B_k$ and use that $K\gg1$ can be arbitrarily large:
\be\label{int delG}\begin{split}
\lim_{\e\to0}\e^{1/2}&\sum_{\e^{1/2}\mu|j_1|\le A}G\bigg(\frac{Q_{\al\al}^{\prime\prime}}2 \e (\mu j_1)^2,\frac{P(\e\mu j_1)}\e;\e\mu j_1,\e\bigg)\\
=&\int_{|s|\le A}\int_0^1 G_0\bigg(\frac{Q_{\al\al}^{\prime\prime}}2 s^2,\chq\bigg)\dd\chq\dd s.
\end{split}
\ee
Clearly the same arguments apply when we add $\dd_x\Phi\cdot\chx$ to the first argument of $G$, cf. \eqref{F via G part2}. 
Therefore, combining \eqref{int delG}, \eqref{F via G part2} and \eqref{delF0 def}, proves \eqref{del F0}.

\subsubsection{Proof of Lemma~\ref{lem:G approx}}
Let $A_1$, $\cht$, $\chq$ and $\theta$ be selected as in the statement of Lemma~\ref{lem:G approx}. Fix any $\bar h\in C_0^2(\br)$ and denote 
\be\label{Q def}\bs
\Q(u):=\int_\br \frac{\chii(\e\chw)}{\chw} \bar h^\prime(\chw+u)\dd\chw.
\end{split}
\ee
We make use of the following simple result, which is stated without proof:
\be\label{aux int asymp}\bs
&|\Q(u)|\le c(1+u^2)^{-1},\ |u|\le c_1/\e,\quad \Q(u)\equiv0,\ |u|\ge c_1/\e,
\end{split}
\ee
for some $c_1>0$, which is the same in both places.

Substitute $\bar h(u)=h(u;\theta,\e)$ of \eqref{h_eps} into \eqref{Q def} to obtain $\Q(u;\theta,\e)$. In view of \eqref{sum j}, \eqref{key int}, and \eqref{Q def}, we estimate 
\be\label{key int appr}\bs
&\bigg|\sum_{j_2\in J_p}\Q(\cht+\chq-j_2;\theta,\e)\big[a_0(\theta,\e j_2)-a_0(\theta,\e \chq)\big](j_2-\chq)_+^{1/2}\bigg|\\
&\le c\e \sum_{j_2=1}^{1/\e}\frac{1}{j_2^2}j_2^{3/2}=O(\e^{1/2}).
\end{split}
\ee
This implies
\be\label{G simpl v2}\bs
G(\cht,\chq;\theta,\e)=&a_0(\theta,\e \chq)\sum_{j_2\in J_p}\Q(\cht+\chq-j_2;\theta,\e)(j_2-\chq)_+^{1/2}+O(\e^{1/2}).
\end{split}
\ee
Since $|\cht|<A_1$, \eqref{aux int asymp} implies that the sum in \eqref{G simpl v2} is absolutely convergent and its value is uniformly bounded. Using the assumptions of Lemma~\ref{lem:G approx}, we find
\be\label{G appr 1}\bs
G(\cht,\chq;\theta,\e)=&\big[a_0(0,P(0))+O(\e^{1/2})\big]\\
&\times\sum_{j_2\in J_p}\Q(\cht+\chq-j_2;\theta,\e)(j_2-\chq)_+^{1/2}+O(\e^{1/2})\\
=&a_0(0,P(0))\sum_{j_2\in J_p}\Q(\cht+\chq-j_2;\theta,\e)(j_2-\chq)_+^{1/2}+O(\e^{1/2}).
\end{split}
\ee

To show that $\chi$ can be dropped from the integral in \eqref{G appr 1} (which is inside the definition of $\Q$), we consider similarly to \eqref{Q def}:
\be\label{Qc def}\bs
\Q_c(u):=\int_\br \frac{1-\chii(\e\chw)}{\chw} \bar h^\prime(\chw+u)\dd\chw=\int_{|\chw|\ge c/\e} \frac{1-\chii(\e\chw)}{\chw} \bar h^\prime(\chw+u)\dd\chw.
\end{split}
\ee
As is easily seen,
\be\label{Qc bnd}\bs
&|\Q_c(u)|\le c\big(u^{-2}+(\e/|u|)\big),\ |u|>c_1/\e;\quad \Q_c(u)=0,\ |u|\le c_1/\e,
\end{split}
\ee
for some $c_1>0$, which is the same in both places. Given that $|\cht|<A_1$, this implies
\be\label{Qc bnd used}\bs
\sum_{j_2\in J_p}\Q_c(\cht+\chq-j_2;\theta,\e)(j_2-\chq)_+^{1/2}=O(\e^{1/2}).
\end{split}
\ee
Due to the term $\e/|u|$ in the bound in \eqref{Qc bnd}, it is essential that the sum in \eqref{Qc bnd used} is over a subset of the range $0<j_2-\chq<c/\e$. Therefore, by \eqref{G appr 1} and \eqref{Qc bnd used},
\be\label{G simpl v4}\bs
G(\cht,\chq;\theta,\e)=&a_0(0,P(0))\int_{\br}\frac 1{\chw} \sum_{j_2\in J_p}h^\prime(\chw+\cht+\chq-j_2;\theta,\e)(j_2-\chq)_+^{1/2} \dd \chw\\
&+O(\e^{1/2}).
\end{split}
\ee

\color{black}
It is obvious that if $\e=0$ in \eqref{Q def} (i.e., if $\chii$ is replaced by the constant function 1), then $\Q(u)=O(u^{-2})$, $u\in\br$. This implies that
\be\label{Q alt use}\bs
\sum_{j_2-\chq>c/\e}\Q(\cht+\chq-j_2;\theta,\e)(j_2-\chq)_+^{1/2}=O(\e^{1/2}).
\end{split}
\ee
Hence, in \eqref{G simpl v4}, we can extend the summation to all of $j_2\in\mathbb Z$.
Further,
\be\label{del Phi}\bs
K(\theta+\e\mu s)=&K(\theta)+O(\e),\ \frac{\Phi(\theta+\e\mu s)-\Phi(\theta)}\e=\mu s\Phi^\prime(\theta)+O(\e),
\end{split}
\ee
where $s\in\tsp\ik_\al$. Here and in what follows, we drop the first argument of $K(x_0=0_2,\al)$ from notation (see the paragraph following \eqref{delF0 def a}). Using \eqref{h_eps}, introduce the variable 
\be
s_1=s_1(s;\theta,\e)=\frac{\Phi(\theta+\e\mu s)-\Phi(\theta)}{\e\mu\Phi^\prime(0)}.
\ee
By the assumption in Theorem~\ref{thm:main res},  $\mu\Phi_\al^\prime(0_2,\al_0)$ is irrational. Hence $\Phi^\prime(0)=\Phi_\al^\prime(0_2,\al_0)\not=0$. The assumption $|\theta|=O(\e^{1/2})$ implies 
\be\label{s1 and s}
s_1(s;\theta,\e)-s=O(\e^{1/2}),\ \pa s_1(s;\theta,\e)/\pa s=1+O(\e^{1/2}). 
\ee
Changing variable $s\to s_1$ in \eqref{h_eps} gives
\be\label{h_eps v2}\bs
h(u;\theta,\e)
&=\mu\int_\br K(\theta+\e\mu \zeta)\ik_\al(\zeta)|\pa\zeta/\pa s|\ik_p(\mu s\Phi^\prime(0)+u)\dd s,\ 
\zeta=\zeta(s;\theta,\e).
\end{split}
\ee
To simplify the next step, in \eqref{h_eps v2}, the new variable $s_1$ is denoted $s$, and the old variable $s$ is denoted $\zeta$. Together with \eqref{G0H0} this implies
\be\label{del h_eps v2}\bs
&h(u;\theta,\e)-\mu h_0(u)\\
&=\mu\int_\br \big[K(\theta+\e\mu \zeta)\ik_\al(\zeta)|\pa\zeta/\pa s|-K(0)\ik_\al(s)\big]\ik_p(\mu s\Phi^\prime(0)+u)\dd s.
\end{split}
\ee
Using that $\ik_\al,\ik_p\in C_0^2(\br)$ (see Assumption~\ref{ass:interp ker}\eqref{ikcont}) and \eqref{s1 and s}, we conclude
\be\label{delu der bnds}
\pa_u^k(h(u;\theta,\e)-\mu h_0(u))=O(\e^{1/2}),\ k=0,1,2.
\ee
\color{black}

Similarly to \eqref{Q def} and \eqref{aux int asymp}, pick any $\bar h_\e\in C_0^2(\br)$ such that $\bar h_\e^{(k)}=O(\e^{1/2})$, $k=0,2$, and define
\be\label{Q1 def}\bs
\Q_\e(u):=\int_\br \frac1{\chw} \bar h_\e^\prime(\chw+u)\dd\chw.
\end{split}
\ee
Then
\be\label{Q1 aux int asymp}\bs
&|\Q_\e(u)|\le c\e^{1/2}(1+u^2)^{-1},\ u\in\br.
\end{split}
\ee
For large $|u|$, \eqref{Q1 aux int asymp} is proven by integrating by parts in \eqref{Q1 def} and using the bound on $\bar h_\e$. For $|u|$ in a bounded set, we use the bound on $\bar h_\e^{\prime\prime}$.

Set $\bar h_\e(u)=h(u;\theta,\e)-\mu h_0(u)$. Using \eqref{del h_eps v2} -- \eqref{Q1 aux int asymp}, we get from \eqref{G simpl v4}
\be\label{G simpl v5}\bs
&|G(\cht,\chq;\theta,\e)-\mu G_0(\cht,\chq)|\le c |J_\e|+O(\e^{1/2}),\\
&J_\e:=\tsum_{j_2\in\mathbb Z}\Q_\e(\cht+\chq-j_2)(j_2-\chq)_+^{1/2}=O(\e^{1/2}).
\end{split}
\ee
This proves the lemma.

\subsection{Proof of Lemma~\ref{lem:del F0 v2}}\label{sec:prf lem del F0 v2}

We evaluate the integrals in \eqref{last sum lim2} similarly to \cite[eqs. (3.16)--(3.20)]{kat19a}. Begin by considering
\be\label{last sum transf}\bs
J(r):=&\int_{\br}\int_{\br}\ioi\frac 1\chw \big[h_0^\prime\big(\chw+r+(Q_{\al\al}^{\prime\prime}/2) s^2-\chq\big)\\
&\hspace{3cm}-h_0^\prime\big(\chw+(Q_{\al\al}^{\prime\prime}/2) s^2-\chq\big)\big]\chq^{1/2}\dd \chq\dd s \dd \chw\\
=&(Q_{\al\al}^{\prime\prime}/2)^{-1/2}\int_{\br}\int_{\br}\int_{\br}\frac 1\chw \big[h_0^\prime\big(\chw+r+u-\chq\big)\\
&\hspace{3cm}-h_0^\prime\big(\chw+ u-\chq\big)\big]u_+^{-1/2}\chq_+^{1/2}\dd \chq\dd u \dd \chw.\\
\end{split}
\ee
Substitute $h_0$ expressed in terms of its Fourier transform, $\tilde h_0=\CF h$, and use the identities \cite[p. 360, eqs. 19 and 21]{gs}:
\be\bs
\CF(x_+^{a-1})&=e^{ai\pi/2}\Gamma(a)(\la+i0)^{-a},\ a\not=0,-1,-2,\dots;\\
\CF(1/x)&=i\pi\text{sgn}\,\la,
\end{split}
\ee
to obtain
\be\label{last sum ft1}\bs
J(r)=\frac1{2\pi} \int_{\br} & \big(\CF(1/\chw)\big)(-\la)\big(\CF u_+^{-1/2}\big)(-\la)\big(\CF\chq_+^{1/2}\big)(\la)\\
&\times \big(e^{-i\la r}-1\big)(-i\la)\tilde h_0(\la)\dd\la\\
=\frac1{2\pi} \int_{\br} & \big(-i\pi\text{sgn}(\la)\big)\big(e^{3i\pi/4}\Gamma(3/2)(\la+i0)^{-3/2}\big)\\
&\times \big(e^{-i\pi/4}\Gamma(1/2)(\la-i0)^{-1/2}\big)\big(e^{-i\la r}-1\big)(-i\la)\tilde h_0(\la)\dd\la.
\end{split}
\ee
This simplifies to
\be\label{last sum ft2}\bs
J(r)=&-\frac{i\Gamma(1/2)\Gamma(3/2)}{2} \int_{\br}  \frac{e^{-i\la r}-1}{\la}\tilde h_0(\la)\dd\la\\
=&\frac{\pi}{8}\int_{\br}\big[\text{sgn}(-r-t)-\text{sgn}(-t)\big]h_0(t)\dd t=-\frac{\pi}{4}\int_0^r h_0(-t)\dd t.
\end{split}
\ee
Substitution of \eqref{last sum ft2} back into \eqref{last sum lim2} yields \eqref{last sum lim3}.

\section{Proof of Lemmas~\ref{lem:int decay 2d} and \ref{lem:Ups props}}

\subsection{Proof of Lemma~\ref{lem:int decay 2d}}\label{ssec:prf int decay 2d}

We may assume $n=0$ in \eqref{D ineqs}. The case $n\ge 1$ is completely analogous.

If $0$ is not in the closure of $I$, integration by parts in \eqref{lim1} twice with respect to $\la$  
yields the top line in \eqref{lim3}. Therefore, using a partition of unity, we can assume that $I=(-\de,\de)$ for some small $0<\de\ll1$, and $D(\la,\theta,\e)\in \coi(I)$ as a function of $\theta$. Change variable $\theta\to t=\sin\theta$ on $I$ to get 
\be\label{lim prf 1}
J(r)=\int_0^\infty \int_{I_1} D_1(\la,t,\e)e^{i r\la t}\dd t \la \dd\la=\int_0^\infty \tilde D_1(\la,r\la,\e) \la\dd\la.
\ee
Here
\be
I_1=(-\sin\de,\sin\de),\ D_1(\la,t,\e)=D(\la,\sin^{-1}t,\e)/(1-t^2)^{1/2},
\ee
and $\tilde D_1(\la,\mu,\e)$ is the Fourier transform of $D_1(\la,t,\e)$ with respect to $t$. Thus $\tilde D_1(\la,\mu,\e)$ decays rapidly as $\mu\to\infty$, i.e. for any $N\in\N_0$ there exists $c_N$ such that
\be\label{tilde D bnd}
|\tilde D_1(\la,\mu,\e)|\le c_N(1+|\mu|)^{-N},\ (\la,\mu,\e)\in (0,\infty)\times \br\times (0,\e_0).
\ee
Selecting $N=3$ gives
\be\label{lim prf 2}
|J(r)|\le c\int_0^\infty \frac{\la}{1+|r\la|^3}\dd\la.
\ee
A change of variable $\la\to r\la$ yields the top case in \eqref{lim3}. 


If $D(\la=0,\theta,\e)\equiv 0$, then, away from $\theta=0$, we integrate by parts in \eqref{lim1} three times instead of two to prove the bottom line in \eqref{lim3}. In a neighborhood of $\theta=0$, the analog of \eqref{lim prf 1} becomes
\be\label{lim prf 3}
J(r)=\int_0^\infty (\tilde D_1(\la,r\la,\e)/\la) \la^2\dd\la.
\ee
Selecting $N=4$ in \eqref{tilde D bnd} then gives
\be\label{lim prf 2b}
|J(r)|\le c\int_0^\infty \frac{\la^2}{1+|r\la|^4}\dd\la=O(r^{-3}),\ r\to\infty,
\ee
and the proof is complete.

\subsection{Proof of Lemma~\ref{lem:Ups props}}\label{sec:prf Ups props}
The statement $\Upsilon(0)=0$ is immediate. To prove the second claim, compute
\be\bs
\Upsilon(+\infty)-\Upsilon(-\infty)&=\int_{\br} \bar h_0(u)\dd u=1.
\end{split}
\ee
The last equality follows from \eqref{norm-deriv} and \eqref{G0H0} (recall that $\bar h_0$ is defined without the prefactor in \eqref{G0H0}). Similarly,
\be\bs
&\Upsilon(+\infty)+\Upsilon(-\infty)\\
&=\frac1{|\dd_x\Phi|}\int_{\br} \bar h_0(u)\int_{\br}\text{sgn}(t-u)\CF^{-1}\bigg(\frac1{1+\kappa C_1|\la|^3}\bigg)\left(\frac{t}{|\dd_x\Phi|}\right)\dd t\dd u=0.
\end{split}
\ee
The last equality follows because the signum function is odd, while the functions $\bar h_0$ and $\CF^{-1}\big((1+\kappa C_1|\la|^3)^{-1}\big)$ are even. The fact that $\bar h_0$ is even follows from Assumption~\ref{ass:interp ker}\eqref{ikeven} and \eqref{G0H0}.


\bibliographystyle{siam}
\bibliography{My_Collection}
\end{document}